\numberwithin{equation}{section}
\theoremstyle{definition}
	\newtheorem{definition}{Definition} 
	\newtheorem*{definition*}{Definition}
	\newtheorem{example}[definition]{Example}
	\numberwithin{definition}{section}
\theoremstyle{plain}
	\newtheorem{lem}[definition]{Lemma}
	\newtheorem{prop}[definition]{Proposition}
	\newtheorem{thm}[definition]{Theorem}
	\newtheorem*{theorem*}{Theorem}
	\newtheorem{cor}[definition]{Corollary}
	\newtheorem*{claim*}{Claim}
	\newtheorem{problem}[definition]{Problem}
\theoremstyle{remark}
	\newtheorem{rem}[definition]{Remark}
\renewcommand{\phi}{\varphi}
\newcommand{\Z}{\mathbb{Z}}
\newcommand{\R}{\mathbb{R}}
\renewcommand\phi{\ensuremath{\varphi}}
\renewcommand\epsilon{\ensuremath{\varepsilon}}
\numberwithin{equation}{section}
\renewcommand{\emph}{\textbf}
\numberwithin{equation}{section}
\begin{document}

\title[Counting functions]{Relations between counting functions\\ on free groups and free monoids}
\author{Tobias Hartnick and Alexey Talambutsa}
\date{\today}
\begin{abstract} We consider finite sums of counting functions on the free group $F_n$ and the free monoid $M_n$ for $n \geq 2$. Two such sums are considered equivalent if they differ by a bounded function. We find the complete set of linear relations between equivalence classes of sums of counting functions and apply this result to construct an explicit basis for the vector space of such equivalence classes. Moreover, we provide a graphical algorithm to determine whether two given sums of counting functions are equivalent. In particular, this yields an algorithm to decide whether two sums of Brooks quasimorphisms on $F_n$ represent the same class in bounded cohomology.
\end{abstract}
\maketitle

\section{Introduction}
\subsection{Counting functions on free groups and free monoids}

Let $S = \{a_1, \dots, a_n\}$ be a finite set of cardinality $n\geq2$. We denote by $M_n$ the free monoid on $S$, i.e. the collection of all words over $S$ (including the empty word $e$), and by $F_n$ the free group on $S$, i.e. the collection of all reduced words over the extended alphabet $\bar{S} := \{a_1, \dots, a_n, a_1^{-1}, \dots, a_n^{-1}\}$.

Formally, a non-trivial element $v = s_1\cdots s_l \in M_n$ is called a \emph{subword} of $w = r_1\cdots r_m\in M_n$ if there exists $j \in \{1, \dots, m-l\}$ such that
\begin{equation}\label{subword}
s_i = r_{j+i} \quad \text{ for all } i = 1, \dots, l.
\end{equation}
Similarly, an element $v \in F_n$ is called a \emph{reduced subword} of an element $w \in F_n$ if the reduced word over $\bar{S}$ representing $v$ is a subword (in the above sense) of the the reduced word representing $w$. These subword relations are among the most basic relations in combinatorial (semi-)group theory.

In this article we are interested in the following quantitative refinement of the subword relation. Given $v = s_1\cdots s_l \in M_n\setminus\{e\}$ and $w = r_1\cdots r_m\in M_n$, we
denote by $\rho_v(w)$ the number of $j \in \{1, \dots, m-l\}$ such that \eqref{subword} holds. This then defines a function $\rho_v: M_n \to \mathbb N_0$ called the \emph{$v$-counting function}\footnote{Since we allow the occurrences of $v$ in $w$ to overlap, the function $\rho_v$ is sometimes called the \emph{overlapping $v$-counting function} or the \emph{big $v$-counting function}, cf. \cite{scl}.}. For example, $\rho_{a_1a_2a_1}(a_1a_2a_1a_2a_1) = 2$. Restricting to reduced words, we similarly obtain a counting function $\rho_v: F_n \to \mathbb N_0$ for every $v \in F_n\setminus\{e\}$.  It is convenient to extend the definitions to the empty word by defining $\rho_e(w)$ to be the word length $|w|_S$ of $w$ with respect to $S$.

In the sequel we write ${\mathcal C}(M_n)$ respectively ${\mathcal C}(F_n)$ for the space of real-valued functions on $M_n$ respectively $F_n$ which is spanned by the corresponding collection of counting functions $\{\rho_v\}$. Our starting point is the following simple observation which goes back (at least) to \cite{Brooks}.
\begin{prop}\label{TrivialBases} The counting functions $\{\rho_v\mid v \in M_n \setminus\{e\}\}$ form a basis for ${\mathcal C}(M_n)$, and similarly the counting functions $\{\rho_v\mid v \in F_n \setminus\{e\}\}$ form a basis for ${\mathcal C}(F_n)$.
\end{prop}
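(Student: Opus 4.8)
The plan is to notice that the spanning half is essentially free: $\mathcal C(M_n)$ is by definition the linear span of $\{\rho_v\}$, and the only counting function not in the claimed family, namely $\rho_e$, is redundant because the word length is a sum of single-letter counts, $\rho_e(w) = |w|_S = \sum_{i=1}^n \rho_{a_i}(w)$ in $M_n$ (and $\rho_e(w) = \sum_{i=1}^n(\rho_{a_i}(w) + \rho_{a_i^{-1}}(w))$ in $F_n$, using the extended alphabet). Hence $\{\rho_v : v \ne e\}$ already spans, and the whole content of the proposition is that this family is \emph{linearly independent}.

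For independence I would exploit two immediate consequences of the definition of $\rho_v$: (i) if $|w| < |v|$ then $\rho_v(w) = 0$, since a word cannot contain a strictly longer (reduced) subword; and (ii) if $|w| = |v|$ then $\rho_v(w) = 0$ unless $w = v$, while $\rho_v(v) \ge 1$, because the only (reduced) subword of length $l$ of a length-$l$ word is that word itself. Now suppose $\sum_{i=1}^{k} c_i \rho_{v_i} = 0$ for pairwise distinct $v_i \in M_n \setminus \{e\}$ and assume, for contradiction, that some $c_i \ne 0$. Pick an index $j$ with $c_j \ne 0$ for which $|v_j|$ is as small as possible, and evaluate the relation at the word $w = v_j$. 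Terms with $|v_i| > |v_j|$ vanish by (i); terms with $|v_i| = |v_j|$ and $v_i \ne v_j$ vanish by (ii); terms with $|v_i| < |v_j|$ have $c_i = 0$ by the choice of $j$. Hence $0 = c_j\, \rho_{v_j}(v_j)$ with $\rho_{v_j}(v_j) \ge 1$, forcing $c_j = 0$, a contradiction. This settles $\mathcal C(M_n)$, and the identical argument with ``subword'' replaced by ``reduced subword'' and $|\cdot|$ the reduced word length handles $\mathcal C(F_n)$.

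There is no serious obstacle here — this is the ``simple observation'' advertised before the statement. The only point one should not skip over is the use of minimality of $|v_j|$ to exclude interference from longer words, together with fact (ii) that two distinct words of equal length never contain one another; granting these, the evaluation collapses to a single non-zero term. One could phrase the same argument as an induction on word length, successively peeling off the shortest words occurring with non-zero coefficient, but the one-shot evaluation above seems cleanest.
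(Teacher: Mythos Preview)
Your proposal is correct and follows essentially the same approach as the paper: both arguments evaluate a putative relation at a word $v$ of minimal length among those appearing with non-zero coefficient, and use that $\rho_w(v)$ vanishes for all other $w$ in the sum (either because $|w|>|v|$ or because $|w|=|v|$ but $w\ne v$), leaving only the term $\alpha_v\rho_v(v)=\alpha_v$. The paper states this in one line (``then $\alpha(v)=\alpha_v$''), while you spell out the case distinction explicitly, but the idea is identical.
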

\begin{proof} On the one hand, the counting functions above span the spaces in question since
\[
\rho_e = \sum_{|w|_S = 1} \rho_w.
\]
Concerning linear independence, let $\alpha=\sum\limits_{w\in W \setminus\{e\}} \alpha_w \rho_w$ be a finite sum and let $v$ be an element of minimal length in $W$ with $\alpha_v\ne 0$. Then $\alpha(v) = \alpha_v$, hence it is possible to compute the coefficients $\alpha_v$ inductively, and the desired linear independence follows.
\end{proof}

From now on we call elements $f,g \in {\mathcal C}(M_n)$ \emph{equivalent} if they differ by a bounded function, and similarly for elements of ${\mathcal C}(F_n)$. We then denote by $\widehat{\mathcal C}(M_n)$ respectively $\widehat{\mathcal C}(F_n)$ the corresponding spaces of equivalence classes. These quotient spaces appear naturally in a number of applications, e.g. in bounded cohomology. They also admit natural interpretations as function spaces spanned by certain cyclic counting functions, see Theorem \ref{ThmCyclicFcts} in the appendix. In analogy with Proposition \ref{TrivialBases} we are going to study the following problem.

\begin{problem}\label{MainProblem} Find explicit bases for the quotient spaces $\widehat{\mathcal C}(M_n)$ and $\widehat{\mathcal C}(F_n)$.
\end{problem}

We present a complete solution to the problem in Theorem \ref{ThmBasis} below. Initially, our interest in this problem was motivated from a specific problem concerning the second bounded cohomology of free groups, which we describe in the next subsection. However, we believe that the problem is also of independent interest within the theory of combinatorics of words.

\subsection{Motivation from bounded cohomology} Historically, the need to understand the quotient space $\widehat{\mathcal C}(F_n)$ first arose from the study of the second bounded group cohomology of $F_n$ in the sense of \cite{Gromov, Ivanov}. We briefly recall this motivation here. A function $\phi: F_n \to \R$ is called a \emph{quasimorphism}\footnote{In the Russian literature such functions are sometimes called \emph{quasi-characters} (see \cite{Faiziev},\cite{Grigorchuk}), apparently following a suggestion by Shtern \cite{Shtern}. Another term in \cite{Burger-Ozawa-Thom}, referring to the stability question of Ulam \cite[Chapter 6.1]{Ulam}, is \emph{$\delta$-homomorphism}. } if
\[
\sup_{g,h \in F_n} |\phi(gh) -\phi(g) -\phi(h)| < \infty.
\]
Now if $\mathcal Q(F_n)$ denotes the space of all quasimorphisms on $F_n$ and $H^2_b(F_n; \R)$ denotes the second bounded cohomology of $F_n$ with trivial real coefficients, then there is an isomorphism (see e.g. \cite{scl})
\begin{equation}\label{BoundedCohomology}
H^2_b(F_n; \R) \cong \mathcal Q(F_n)/({\rm Hom}(F_n, \R) \oplus \ell^\infty(F_n)).
\end{equation}
In his famous paper \cite{Brooks}, R. Brooks (following earlier work of Rhemtulla \cite{Rhemtulla}) pointed out that the symmetrized counting functions
\[\phi_w := \rho_w-\rho_{w^{-1}}: F_n \to \Z\]
are quasimorphisms. It thus follows from \eqref{BoundedCohomology} that if we denote by $\widehat{\mathcal B}(F_n)$ the subspace of $\widehat{\mathcal C}(F_n)$ spanned by the equivalence classes of the \emph{Brooks quasimorphisms} $\phi_w$, then the quotient $\widehat{\mathcal B}(F_n)/{\rm Hom}(F_n, \R)$ embeds into $H^2_b(F_n; \R)$. In particular,
\[
\dim H^2_b(F_n; \R) \geq \dim \widehat{\mathcal B}(F_n) -n.
\]
Brooks claimed in \cite{Brooks} that (by an argument similar to the proof of Proposition \ref{TrivialBases}) the classes $[\phi_w] \in \widehat{\mathcal C}(F_n)$ were linearly independent except for the obvious anti-symmetry relations
\begin{equation}\label{SymmetryRelationIntroduction}
\phi_w = -\phi_{w^{-1}},
\end{equation}
and deduced that $\dim H^2_b(F_n; \R) = \infty$, thereby providing the first example of a group with infinite-dimensional second bounded cohomology. However, as Grigorchuk pointed out in \cite[p.139]{Grigorchuk}, the  linear combination \[\phi_{a_1a_2} + \phi_{a_1^{-1}a_2} + \phi_{a_1a_2^{-1}} + \phi_{a_1^{-1}a_2^{-1}}\]
is bounded in absolute value by $1$, hence yields a counterexample to the claim of Brooks. Nevertheless it is true that $\dim H^2_b(F_n; \R) = \infty$. Historically, the first complete proof was given by Mitsumatsu \cite{Mitsumatsu} who proved linear independence of an infinite collection of equivalence classes of Brooks quasimorphisms (see see e.g. \cite{Nica} for a modern treatment). Mitsumatsu's result was later extended by Faiziev \cite{Faiziev} and Grigorchuk \cite{Grigorchuk} who exhibited larger collection of linearly independent elements. Despite these efforts, the problem of finding a basis of $\widehat{\mathcal B}(F_n)$ remained open ever since. 

It turns out that a basis for $\widehat{\mathcal B}(F_n)$ can be constructed quite easily from a suitable basis of $\widehat{\mathcal C}(F_n)$. This motivated us to study the space $\widehat{\mathcal C}(F_n)$ and, by analogy, $\widehat{\mathcal C}(M_n)$.

\subsection{Relations between counting functions}
To state our results, we introduce the following notation. Denote by $\R[M_n]$ the space of finitely supported real-valued functions on $M_n$, and note that every element of $\R[M_n]$ can be written uniquely as a sum
\[\sum_{g \in M_n}\lambda_g \delta_g,\]
where $\delta_g$ is the function taking value $1$ at $g$ and $0$ elsewhere and $\lambda_g = 0$ for almost all $g \in M_n$. We thus have a canonical linear surjection
\[
\mathfrak q: \R[M_n] \to \widehat{\mathcal C}(M_n), \quad \mathfrak q\left(\sum_{g \in M_n}\lambda_g \delta_g \right) = \left[\sum_{g \in M_n}\lambda_g\rho_g\right],
\]
and we can think of the kernel $K(M_n)$ of $\mathfrak q$ as the space of relations satisfied by sums of counting functions in the quotient space $\widehat{\mathcal C}(M_n)$. By the same formula we also define a map $q: \R[F_n] \to  \widehat{\mathcal C}(F_n)$, whose kernel $K(F_n)$ describes the relations satisfied by sums of counting functions in $ \widehat{\mathcal C}(F_n)$. Finally, there is also a symmetrized version of this map, which parametrizes the Brooks space $ \widehat{\mathcal B}(M_n)$ and is given by
\[
q_{\rm sym}: \R[F_n] \to \widehat{\mathcal B}(F_n), \quad q_{\rm sym}\left(\sum_{g \in G}\lambda_g \delta_g \right) = \left[\sum_{g \in G}\lambda_g\phi_g\right].
\]
Its kernel $K_{\rm sym}(F_n)$ parametrises the relations between Brooks quasimorphisms. Our first result describes the relation spaces $K(M_n)$, $K(F_n)$ and  $K_{\rm sym}(F_n)$ explicitly.
\begin{thm}[Linear relations between counting functions]
\label{RelationsObvious}
Given $w \in M_n$, define $\{l_w, r_w\} \subset \R[M_n]$ by
\begin{equation}\label{KirchhoffMonoid}
l_w := \delta_w - \sum_{s \in S}\delta_{sw} \quad
\text{and}
\quad
r_w := \delta_w - \sum_{s \in S}\delta_{ws}.
\end{equation}
Given $w \in F_n$ with initial letter $w_1$ and final letter $w_{\rm fin}$, define
\begin{equation}\label{KirchhoffGroup}
l_w := \delta_{w} -  \negthickspace\negthickspace\negthickspace\negthickspace{\sum_{s \in \bar{S} \setminus \{ w_1^{-1}\}}} \delta_{sw},
\quad
r_w := \delta_{w} - \negthickspace\negthickspace\negthickspace\negthickspace {\sum_{s \in \bar{S} \setminus \{ w_{\rm fin}^{-1}\}}} \delta_{ws}\quad \text{and}
\quad
s_w :=\delta_w + \delta_{w^{-1}}.
\end{equation}
Then the relation spaces $K(M_n)$, $K(F_n)$ and $K_{\rm sym}(F_n)$ defined above admit the following spanning sets\footnote{When we reported this result to Danny Calegari, he kindly pointed out to us that Part (iii) of the Theorem can also be deduced from results presented in the preprint version (but not in the published version) of his joint article with Alden Walker \cite{CalegariWalker}.}:
\begin{itemize}
\item[(i)] The space $K(M_n) = \ker(\mathfrak q)$ is spanned by the set $\bigcup_{w \in M_n} \{l_w, r_w\}$.
\item[(ii)] The space $K(F_n) = \ker(q)$ is spanned by the set $\bigcup_{w \in F_n} \{l_w, r_w\}$.
\item[(iii)] The space $K_{\rm sym}(F_n) =\ker(q_{\rm sym})$ is spanned by the set $\bigcup_{w \in F_n} \{l_w, r_w, s_w\}$.
\end{itemize}
\end{thm}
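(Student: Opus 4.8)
The plan is to prove, in each of the three cases, that the stated relation space coincides with the linear span of the listed generators; the inclusion $\supseteq$ is elementary and the inclusion $\subseteq$ carries all the content. For $\supseteq$ one argues generator by generator. If $w\in M_n$, then any occurrence of $w$ inside a word $u$ which does not begin at the first letter of $u$ is immediately preceded by a unique letter $s\in S$ and thus extends to an occurrence of $sw$, and conversely; hence $\rho_w-\sum_{s\in S}\rho_{sw}$ takes only the values $0$ and $1$, so $l_w\in K(M_n)$, and likewise $r_w\in K(M_n)$. The same bookkeeping, with the one caveat that the letter preceding an occurrence of $w$ in a reduced word cannot be $w_1^{-1}$, shows $l_w,r_w\in K(F_n)$, while the identity $\phi_w+\phi_{w^{-1}}=0$ gives $s_w\in K_{\rm sym}(F_n)$.

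For $\subseteq$ the central device is a ``window'' normalisation. Fix $\ell\ge 2$. For $g$ with $|g|\le\ell$, the value $\rho_g(u)$ differs by at most $\ell$ (uniformly in $u$) from the number of length-$\ell$ subwords (``windows'') of $u$ whose prefix of length $|g|$ equals $g$; hence a finite sum $f=\sum_{|g|\le\ell}\lambda_g\rho_g$ is equivalent to $F_\psi(u):=\sum_{\text{windows }w\text{ of }u}\psi(w)$, where $\psi(w)=\sum_{g\text{ a prefix of }w,\ |g|\le\ell}\lambda_g$, and $\psi$ ranges over all real functions on the set of (reduced, in the $F_n$ case) words of length $\ell$. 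The heart of the proof is then to decide when $F_\psi$ is bounded. Regard the words of length $\ell$ as the edges of a de Bruijn-type digraph $G$ with vertex set the words of length $\ell-1$, the edge $w$ running from $w_1\cdots w_{\ell-1}$ to $w_2\cdots w_\ell$ (in the free-group case one keeps only reduced words as vertices and edges). Then a word $u$ of length $\ge\ell$ is precisely a directed walk in $G$, and $F_\psi(u)$ is the $\psi$-sum along it. Since a closed directed walk can be traversed arbitrarily often, boundedness of $F_\psi$ forces the $\psi$-sum to vanish on every closed directed walk; and $G$ is strongly connected — obvious for $M_n$, and for $F_n$ a consequence of $n\ge2$, since a reduced word of length $\ell-1$ can be carried to any other by successively appending letters, inserting one auxiliary letter avoiding a two-element forbidden set whenever a cancellation would otherwise occur. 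Strong connectivity plus vanishing on closed walks produces a ``potential'' $\eta$ on the vertices with $\psi(w)=\eta(w_2\cdots w_\ell)-\eta(w_1\cdots w_{\ell-1})$; conversely, for such a $\psi$ the sum $F_\psi$ telescopes to a uniformly bounded function. Thus $F_\psi$ is bounded if and only if $\psi=d\eta$ for some $\eta$, where $d$ is this difference operator.

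Parts (i) and (ii) follow by induction on $\ell:=\max\{|g|:\lambda_g\ne0\}$, the cases $\ell\le1$ being immediate (there the only relation is $\rho_e=\sum_s\rho_s$). For the inductive step, boundedness of $f$ gives $\psi=d\eta$ with $\eta=\sum_{|v|=\ell-1}\eta_v\delta_v$; a direct computation yields $\psi_{-l_v}=d\delta_v$ for $|v|=\ell-1$, so after subtracting the combination $-\sum_v\eta_v\,l_v$ of the $l_w$ (which lies in $K$, hence does not change the class of $f$) we obtain an element whose associated $\psi$ vanishes. Vanishing of $\psi$ forces the length-$\ell$ part of that element to be a combination $\sum_v t_v\,R_v$, where $R_v$ is the sum of $\delta_{vs}$ over admissible letters $s$; modifying by the combination $\sum_v t_v\,r_v$ of the $r_w$ removes all length-$\ell$ terms without changing the class, and the resulting element has maximal length $\le\ell-1$ and still represents $0$, so the induction hypothesis applies and places the original $\sum_g\lambda_g\delta_g$ in $K$. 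The step I expect to require the most care is exactly the boundedness criterion for $F_\psi$: making the window reduction and all error terms uniform in $u$, and — the genuinely substantive point — the cohomological lemma, where it matters that words correspond to walks rather than paths (so closed walks may be repeated) and that the reduced de Bruijn digraph is strongly connected for $n\ge2$.

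Finally, (iii) is deduced from (ii). The map $q_{\rm sym}$ factors as $q\circ\sigma$, where $\sigma\colon\R[F_n]\to\R[F_n]$ is the linear map $\delta_g\mapsto\delta_g-\delta_{g^{-1}}$, whose kernel is the span of the $s_w$ and whose image is the subspace of anti-symmetric elements; hence $K_{\rm sym}(F_n)=\sigma^{-1}(K(F_n))$. If $y\in K(F_n)$ is anti-symmetric then $\sigma(\tfrac12 y)=y$ with $\tfrac12 y\in K(F_n)=\linspan\{l_w,r_w\}$ by (ii), so $K(F_n)\cap\im\sigma=\sigma(\linspan\{l_w,r_w\})$; therefore any $x$ with $\sigma(x)\in K(F_n)$ differs, by an element of $\ker\sigma=\linspan\{s_w\}$, from an element of $\linspan\{l_w,r_w\}$. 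Together with the fact that $l_w,r_w\in K_{\rm sym}(F_n)$ (checked directly, or noted via $\sigma(l_w)=l_w-r_{w^{-1}}\in K(F_n)$), this gives $K_{\rm sym}(F_n)=\linspan\{l_w,r_w,s_w\}$.
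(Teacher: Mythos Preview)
Your argument is correct and takes a genuinely different route from the paper's.

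The paper proves (i) and (ii) by a dimension count on the pure pieces: it shows $\dim B_L \ge \dim K_L$ for every $L$, where $B_L$ and $K_L$ are the pure length-$L$ parts of $B=\linspan\{l_w,r_w\}$ and $K$. The lower bound on $\dim B_L$ comes from computing the rank of the matrix of relations $b_w=r_w-l_w$, which reduces to connectedness of the (Martin--)de~Bruijn graph. The upper bound on $\dim K_L$ is obtained by constructing sufficiently many linearly independent \emph{certificates} $\langle c\rangle_L$, defined via homogenisation (cyclic counting), and a careful choice of special words $c$ making the certificate matrix triangular.

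Your proof is instead constructive. The window normalisation converts boundedness of $f$ into boundedness of an edge-sum $F_\psi$ along walks in the directed de~Bruijn graph; strong connectivity (the same graph-theoretic input the paper uses, but for a different purpose) plus vanishing on closed walks yields the potential $\eta$, and then the identities $\psi_{-l_v}=d\delta_v$ and $R_v=\delta_v-r_v$ let you strip off the top layer explicitly and induct. This avoids the homogenisation machinery and the delicate choice of certificate words entirely; it is more elementary and gives, in effect, an algorithm expressing any relation as a combination of $l_w$'s and $r_w$'s. What the paper's approach buys in return is modularity (the two bounds are independent) and, as a by-product, the exact dimensions $\dim K_L(M_n)=n^{L-1}-1$ and $\dim K_L(F_n)=2n(2n-1)^{L-2}-1$, which feed directly into the basis theorem later.

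For part (iii) your deduction from (ii) via the anti-symmetrisation map $\sigma$ and the identification $K_{\rm sym}=\sigma^{-1}(K)$ is essentially the same as the paper's (the paper writes $i(f)=f+f^*$ for your $\sigma$ and argues with the same image/kernel decomposition).
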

Theorem \ref{RelationsObvious} will be proved in Section \ref{SecRelations} below.
\begin{rem}
\begin{enumerate}[(1)]
\item For $w = e$ the definitions of $r_w$, $s_w$ and $l_w$ have to be understood as follows: In the monoid case we define
\[
l_e := r_e := \delta_e - \sum_{s \in S}\delta_s.
\]
In the group case we define
\[
l_e := r_e := \delta_e - \sum_{s \in \bar S}\delta_s
\]
and $s_e:= 2\delta_e$.
\item Relations similar to the relations $l_w$, $r_w$ and $s_w$ appear under many different names in the literature. We prefer the terms \emph{left-extension relation}, \emph{right-extensions relations} and \emph{symmetry relations} respectively. In different contexts, the left- and right-extension relations are sometimes called (left- and right-) \emph{Kirchhoff laws} or \emph{laws of total probability}.
\item All of these relations are essentially obvious\footnote{For the convenience of the reader we establish them in Subsection \ref{SecBasicRelations} below.}. The theorem can thus be stated informally by saying that ``there are no other relations than those following from the obvious ones''.
\item The statement of (iii) contains some redundancy. Namely, since the right-extension relations follow from the left-extension relations and the symmetry relations, the space $K_{\rm sym}(F_n)$ is already spanned by the set $\bigcup_{w \in W} \{l_w, s_w\}$ (or, equivalently, $\bigcup_{w \in W} \{r_w, s_w\}$). We stated (iii) in the above redundant form to stress the analogy with (i) and (ii).
\end{enumerate}
\end{rem}

\subsection{Explicit bases}
Using the description of the space of relations provided in the last subsection we are able to provide an explicit basis for each of the spaces $\widehat{\mathcal C}(M_n)$,  $\widehat{\mathcal C}(F_n)$ and $\widehat{\mathcal B}(F_n)$. The final result is as follows:
\begin{thm}[Basis theorem]\label{ThmBasis}
\begin{itemize}
\item[(i)] Denote by $W$ the set of all words in $M_n$ which do not start or end with $a_1$ (including the empty word). Then the classes represented by the counting functions $\{\rho_w\mid w \in W\}$ form a basis for the space $\widehat{\mathcal C}(M_n)$.
\item[(ii)] Denote by $W'$ the set of all reduced words in $F_n$ which do not start with $a_1$ or $a_2a_1^{-1}$ and do not end with $a_1^{-1}$ or $a_1a_2^{-1}$ (including the empty word),  and let $W := W' \cup \{a_1^{-1}\}$. Then the classes represented by the counting functions $\{\rho_w\mid w \in W\}$ form a basis for the space $\widehat{\mathcal C}(F_n)$.
\item[(iii)] Let $W$ as in (ii) and let $W_0 := W \cup \{a_1\} \setminus\{e\}$. Let $W_+$ be a subset of $W_0$ which intersects each pair $\{w, w^{-1}\}\subset W$ in precisely one element. Then the classes represented by the counting quasimorphisms $\{\phi_w\mid w \in W\}$ form a basis for the space $\widehat{\mathcal B}(F_n)$.
\end{itemize}
\end{thm}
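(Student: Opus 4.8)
The plan is to deduce all three parts from Theorem~\ref{RelationsObvious}, which determines the relation spaces, by combining a reduction (normal‑form) argument for the spanning statements with an evaluation‑on‑test‑words argument for linear independence.

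\textbf{Part (i).} Write $K=K(M_n)$. For spanning it is enough to show that every $\delta_v$, $v\in M_n$, is congruent modulo $K$ to a linear combination of $\{\delta_w : w\in W\}$. Since $l_v,r_v\in K$, the relations \eqref{KirchhoffMonoid} allow rewriting: if $v$ ends with $a_1$, say $v=v'a_1$ with $v'\neq e$, then
\[
\delta_v \;\equiv\; \delta_{v'}-\!\!\sum_{s\in S\setminus\{a_1\}}\!\!\delta_{v's}\pmod K ,
\]
and every monomial on the right is either strictly shorter than $v$ or of the same length but not ending in $a_1$; iterating (the case $v=a_1$ using $l_e=r_e$) clears all $a_1$‑suffixes, and a symmetric use of the left relations clears all $a_1$‑prefixes while preserving the absence of $a_1$‑suffixes. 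Induction on word length proves the spanning statement. For linear independence, let $f=\sum_{w\in W}c_w\rho_w$ be bounded, let $v$ have minimal length among the $w$ with $c_w\neq0$, set $L=\max\{|w|:c_w\neq0\}$ and fix $N>L$. On $T_k:=(a_1^{\,N}v)^k a_1^{\,N}$ no occurrence of any $w\in W$ can meet an $a_1^{\,N}$‑block (such a $w$ neither starts nor ends with $a_1$ and is too short to contain a block), so every occurrence lies inside one of the $k$ copies of $v$; hence $\rho_w(T_k)=k\,\rho_w(v)$ for all $w\in W$, and by minimality of $|v|$ (exactly as in the proof of Proposition~\ref{TrivialBases}) $f(T_k)=k\sum_w c_w\rho_w(v)=k\,c_v$. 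As $f$ is bounded this forces $c_v=0$, hence all $c_w=0$.

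\textbf{Part (ii).} The spanning statement is obtained in the same manner using the relations \eqref{KirchhoffGroup}: one first clears the ``bad suffixes'' $a_1^{-1}$ and $a_1a_2^{-1}$ by means of the $r_w$, and then the ``bad prefixes'' $a_1$ and $a_2a_1^{-1}$ by means of the $l_w$. One checks that each such step replaces $\delta_v$ by monomials that are shorter, or of equal length but strictly smaller for an appropriate well‑founded order on monomials in which $a_1^{\pm1}$ and $a_2^{\pm1}$ are placed extremally, that the prefix phase preserves the result of the suffix phase, and that the monomials fixed by the whole procedure are exactly the $\delta_w$ with $w\in W$ --- the element $a_1^{-1}$ surviving because the only $r$‑relation that could eliminate it needs a non‑empty stem. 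Thus $\{[\rho_w]:w\in W\}$ spans $\widehat{\mathcal C}(F_n)$. For linear independence one again starts from a bounded $f=\sum_{w\in W}c_w\rho_w$ and a minimal‑length witness $v$; the clean buffer $a_1^{\,N}$ is now unavailable (a $w\in W$ may end with $a_1$ and so meet a buffer starting with $a_1$), and there are two natural routes: either isolate $v$ by words built from it and a fixed long buffer and control the --- now possibly non‑zero --- boundary contributions, or, more robustly, upgrade the reduction above to a confluent one, so that the resulting well‑defined map $\mathcal R\colon\R[F_n]\to\langle\delta_w:w\in W\rangle$ is the identity on $\langle\delta_w:w\in W\rangle$ and, by Theorem~\ref{RelationsObvious}, annihilates $K(F_n)$ once one verifies that both families $l_w$ and $r_w$ reduce to $0$; then $\langle\delta_w:w\in W\rangle\cap K(F_n)=0$. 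This confluence check --- equivalently, the statement that no relations beyond those of Theorem~\ref{RelationsObvious} collapse the classes $[\rho_w]$, $w\in W$ --- is the technical heart of the theorem, and it is exactly here that the precise shape of $W$ (the exceptional patterns $a_2a_1^{-1}$, $a_1a_2^{-1}$ and the extra element $a_1^{-1}$) is forced.

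\textbf{Part (iii).} This follows formally from (ii). The inversion $g\mapsto g^{-1}$ induces an involution $\iota$ of $\widehat{\mathcal C}(F_n)$ with $\iota[\rho_v]=[\rho_{v^{-1}}]$ (it preserves boundedness and carries $K(F_n)$ onto itself, since $\iota(l_w)=r_{w^{-1}}$), and as $\phi_v=\rho_v-\rho_{v^{-1}}$ we get $\widehat{\mathcal B}(F_n)=\im(\id-\iota)$, the $(-1)$‑eigenspace of $\iota$. Apply $\tfrac12(\id-\iota)$ to the basis $\{[\rho_w]:w\in W\}$ of (ii): using that the conditions defining $W'$ are inversion‑symmetric ($W'$ is $\iota$‑stable with $e$ as its only inversion‑fixed word) one gets $\tfrac12[\phi_w]$ for $w\in W'\setminus\{e\}$, $0$ for $w=e$, and $-\tfrac12[\phi_{a_1}]$ for $w=a_1^{-1}$ (after rewriting $[\rho_{a_1}]$ in the basis via the length‑one relation); since $[\phi_{w^{-1}}]=-[\phi_w]$, this shows $\{[\phi_w]:w\in W_+\}$ spans $\widehat{\mathcal B}(F_n)$. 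For independence, expand $\sum_{w\in W_+}b_w[\phi_w]=0$ in the basis $\{[\rho_w]:w\in W\}$: each $w\in W_+$ with $|w|\geq2$ owns the basis vector $[\rho_w]$ --- no other $[\phi_{w'}]$ with $w'\in W_+$ involves it, the length‑one $[\phi_{a_j^{\pm1}}]$ involving only basis vectors of length $\leq1$ --- which forces those $b_w=0$; what is left is a relation $\sum_j\beta_j(\rho_{a_j}-\rho_{a_j^{-1}})\equiv0$ among the coordinate homomorphisms $F_n\to\Z$, hence trivial. Therefore $\{[\phi_w]:w\in W_+\}$ is a basis of $\widehat{\mathcal B}(F_n)$, which is the assertion of (iii). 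The one genuinely hard point in this program is the confluence/independence step in part (ii); everything else is either direct combinatorial bookkeeping or formal.
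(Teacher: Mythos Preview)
Your Part~(i) is correct and more direct than the paper's. The paper proves independence indirectly via a dimension count: the proof of Theorem~\ref{RelationsObvious} already pins down $\dim\widehat{\mathcal C}(M_n)_L=(n-1)n^{L-1}+1$ (the certificates give a lower bound on the codimension of $K_L$, the basic relations the matching upper bound), after which pure bases are identified with complements of spanning trees in de~Bruijn graphs (Proposition~\ref{BasisMonoidTrees}), and independence of $B(W_L)$ follows from spanning plus $|W_L|=\dim\widehat{\mathcal C}(M_n)_L$. Your $a_1^{\,N}$-buffer test words bypass all of this machinery; the price is that you do not obtain the graph-theoretic description of \emph{all} pure bases that the paper picks up along the way. (One small omission: when $v=e$ the identity $\rho_w(T_k)=k\,\rho_w(v)$ fails for $w=e$, so that case needs a separate sentence.) Part~(iii) is essentially the paper's Lemma~\ref{BasisQM}, phrased in terms of the $(-1)$-eigenspace of the inversion involution.

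Part~(ii), however, has a genuine gap: you establish spanning but openly concede that independence --- your ``confluence check'' --- is not carried out, and the buffer approach is abandoned. The paper does \emph{not} prove independence by confluence or by test words. It reuses the dimension-count strategy: Step~2 of Subsection~\ref{SubsecRelGroups} constructs exactly $\dim\R[F_n]_L-2n(2n-1)^{L-2}+1$ linearly independent certificates on $\R[F_n]_L$ vanishing on $K_L$ (evaluations of homogenizations at cyclically reduced words $ws(w)$, with the suffix $s(w)$ chosen case by case to avoid cyclic cancellation), which forces $\dim\widehat{\mathcal C}(F_n)_L=2n(2n-1)^{L-2}(2n-2)+1$ for $L\geq2$. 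Independence of $\{[\rho_w]:w\in W\}$ then follows from spanning together with the cardinality match $|W_L|=\dim\widehat{\mathcal C}(F_n)_L$ (Corollary~\ref{CompatibleBasisGroup}). In short, the ``technical heart'' you flag as missing is not a separate confluence argument at all; it is already contained in the certificate construction inside the proof of Theorem~\ref{RelationsObvious}, and you should simply invoke that dimension formula.
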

We will establish Theorem~\ref{ThmBasis} in Section~\ref{SecBasis} below. Parts (i) and (ii) solve Problem \ref{MainProblem}, and Part (iii) solves the long-standing problem of finding an explicit basis for the Brooks space $\widehat{\mathcal B}(F_n)$. There are of course many possible choices for $W^+$. Concretely, one can choose an order on $\bar S$ and order $F_n$ lexicographically. For any such choice, the classes represented by the counting functions associated with the words
\[
W^+ = \{w \in W_0 \mid w < w^{-1}\}
\]
form a basis.

\subsection{Algorithms for comparing counting functions} For efficient computations in $\widehat{\mathcal C}(M_n)$ and $\widehat{\mathcal C}(F_n)$ (and its subspace $\widehat{\mathcal B}(F_n)$) it is crucial to be able to decide efficiently whether two given elements of $\mathcal C(M_n)$ or $\mathcal C(F_n)$ represent the same element in $\widehat{\mathcal C}(M_n)$ or $\widehat{\mathcal C}(F_n)$. Since subtraction of counting functions can be done efficiently, this problem amounts to deciding whether a sum of the form
\begin{equation}\label{Introf}
f=\sum \alpha_w\rho_w,
\end{equation}
represents the zero class in  $\widehat{\mathcal C}(M_n)$ or $\widehat{\mathcal C}(F_n)$. In principal, Theorem \ref{ThmBasis} allows us to solve this problem, by expanding $f$ in our explicit basis. However, such an expansion, if done naively, is not efficient in any sense. We thus provide in Section \ref{SecAlgorithmsNaive} an algorithm to decide triviality of $[f]$ in a much faster way. The algorithm is based on an interpretation of sums of the form \eqref{Introf} as finite weighted trees, which we discuss in Section \ref{SecTrees}. It turns out that for many trees one can decide immediately from looking at the picture whether the corresponding sum represents a non-trivial class, see Theorem \ref{Unbalanced} below. Our algorithm provides a graphical way to replace a weighted tree by an equivalent one in such a way that after finitely many steps the resulting tree either obviously represents a non-trivial class or obviously represents the trivial class. See Section \ref{SecAlgorithmsNaive} for a precise description of the algorithm.

In praxis, the algorithm described in Section \ref{SecAlgorithmsNaive} is very fast when properly implemented. Analyzing its runtime theoretically requires some lengthy and technical arguments in complexity theory, which are beyond the scope of the present article. We refer readers interested in these purely algorithmic aspects to the sequel article \cite{Sequel}, where we also provide a detailed runtime analysis for one possible implementation of the algorithm.

\subsection{Outlook and open problems} The present article is a first major step towards efficient computation with counting functions, and in particular, towards efficient computations in the Brooks space $\widehat{\mathcal B}(F_n)$. We would like to mention that while there are many good reasons why one would want to carry out computations in $\widehat{\mathcal B}(F_n)$, the present work is motivated by some specific problems arising from work of the first author with P.~Schweitzer \cite{Hartnick-Schweitzer} concerning the ${\rm Out}(F_n)$-action on bounded cohomology of free groups. Namely, the automorphism group of $F_n$ acts naturally on the space $\mathcal Q(F_n)/\ell^\infty(F_n)$, and this action factors through ${\rm Out}(F_n)$. There is a natural ${\rm Out}(F_n)$-invariant locally-convex (non-complete) topology on $\mathcal Q(F_n)/\ell^\infty(F_n)$ given by pointwise convergence of homogeneous representatives (cf. \cite{Grigorchuk, Hartnick-Schweitzer}). The following equivariant version of a classical result of Grigorchuk \cite{Grigorchuk} was established in \cite[Section 2]{Hartnick-Schweitzer}.
\begin{thm}[Grigorchuk, Hartnick -- Schweitzer]
The Brooks space $\widehat{\mathcal B}(F_n)$ is a dense subspace of $\mathcal Q(F_n)/\ell^\infty(F_n)$ and invariant under the action of ${\rm Out}(F_n)$. In particular, $\widehat{\mathcal B}(F_n)$ is independent of the free generating set used to define it, and the action of ${\rm Out}(F_n)$ on $\mathcal Q(F_n)/\ell^\infty(F_n)$ (and thus also the ${\rm Out}(F_n)$-action on $H^2_b(F_n; \R)$) is uniquely determined by its restriction to $\widehat{\mathcal B}(F_n)$.
\end{thm}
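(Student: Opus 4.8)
The plan is to prove the two substantive claims --- density of $\widehat{\mathcal B}(F_n)$ in $\mathcal Q(F_n)/\ell^\infty(F_n)$ and $\mathrm{Aut}(F_n)$-invariance of $\widehat{\mathcal B}(F_n)$ --- separately, and then to deduce the rest formally. I would work throughout with homogeneous representatives: writing $\mathcal Q(F_n) = \mathcal Q_h(F_n) \oplus \ell^\infty(F_n)$ with $\mathcal Q_h(F_n)$ the homogeneous quasimorphisms, one identifies $\mathcal Q(F_n)/\ell^\infty(F_n)$ with $\mathcal Q_h(F_n)$ carrying the topology of pointwise convergence on $F_n$; since $F_n$ is countable this topology is metrizable. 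Two standing facts bridge the combinatorics of counting functions and the analysis of quasimorphisms: homogeneous quasimorphisms are conjugation invariant, hence determined by their values on cyclically reduced words; and the homogenization of a Brooks quasimorphism satisfies $\widehat{\phi_v}(g) = \rho_v^{\mathrm{cyc}}(g) - \rho_{v^{-1}}^{\mathrm{cyc}}(g)$ for cyclically reduced $g$, where $\rho_v^{\mathrm{cyc}}$ denotes the count of cyclic occurrences of $v$.

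For density I would follow Grigorchuk \cite{Grigorchuk}. Given a homogeneous quasimorphism $\phi$ of defect $D(\phi)$, one constructs for each $N$ a finite $\R$-linear combination $\psi_N$ of Brooks quasimorphisms of word length $N$ whose coefficients are prescribed by evaluating $\phi$ on the short periodic conjugacy classes via the formula for $\widehat{\phi_v}$ above; what makes this system of linear conditions consistent is precisely the self-overlap structure of reduced words, which is encoded by the left- and right-extension relations of Theorem \ref{RelationsObvious}. The point of the construction is that on every cyclically reduced $g$ the error $|\phi(g) - \widehat{\psi_N}(g)|$ admits a bound which, for each fixed conjugacy class, tends to $0$ as $N \to \infty$; since every element of $F_n$ is conjugate to a cyclically reduced word of fixed length, this gives $\widehat{\psi_N} \to \phi$ pointwise, and hence density. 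Producing and justifying that uniform error estimate is the analytic core of this half.

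For $\mathrm{Aut}(F_n)$-invariance I would first record the easy structural points: $\mathrm{Aut}(F_n)$ acts on $\mathcal Q_h(F_n)$ by precomposition $\phi \mapsto \phi \circ \alpha$, homogenization commutes with this action, inner automorphisms act trivially by conjugation invariance, and each $\alpha^\ast$ is continuous for pointwise convergence --- so one obtains a continuous linear $\mathrm{Out}(F_n)$-action. To show it preserves $\widehat{\mathcal B}(F_n)$ it suffices, since $\mathrm{Aut}(F_n)$ is generated by elementary Nielsen transformations and $\widehat{\mathcal B}(F_n)$ is spanned by the $[\phi_w]$, to check that $[\phi_w \circ \alpha] \in \widehat{\mathcal B}(F_n)$ for each reduced $w$ and each Nielsen generator $\alpha$. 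The crucial lemma is that the reduced form of $\alpha(g)$ is produced from that of $g$ by a uniformly local, finite-state process, whence $\rho_w \circ \alpha$ and $\rho_{w^{-1}} \circ \alpha$ each differ by a bounded function from a finite $\R$-linear combination of counting functions; subtracting, $\phi_w \circ \alpha \equiv \sum_v c_v \rho_v$ modulo bounded functions. Finally $\phi_w \circ \alpha$ is antisymmetric under $g \mapsto g^{-1}$ (because $\phi_w$ is and $\alpha$ commutes with inversion), which forces the symmetrization $\sum_v c_v(\rho_v + \rho_{v^{-1}})$ to be bounded; therefore $\phi_w \circ \alpha \equiv \tfrac{1}{2}\sum_v c_v(\rho_v - \rho_{v^{-1}}) = \tfrac{1}{2}\sum_v c_v \phi_v$, which lies in $\widehat{\mathcal B}(F_n)$.

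The remaining assertions then follow formally. Any two free generating sets $S, S'$ are related by an automorphism $\alpha$ which carries reduced words for the one basis to reduced words for the other letter by letter, so $\phi^{S'}_{w'} = \phi^{S}_{\alpha^{-1}(w')} \circ \alpha^{-1}$ and hence $\widehat{\mathcal B}_{S'}(F_n) = (\alpha^{-1})^\ast \widehat{\mathcal B}_S(F_n) = \widehat{\mathcal B}_S(F_n)$ by invariance; thus $\widehat{\mathcal B}(F_n)$ is independent of the generating set. For uniqueness, $\mathcal Q(F_n)/\ell^\infty(F_n)$ is Hausdorff and each $\alpha^\ast$ is a continuous linear endomorphism, hence determined by its restriction to the dense subspace $\widehat{\mathcal B}(F_n)$; and since $H^2_b(F_n; \R)$ is the quotient of $\mathcal Q(F_n)/\ell^\infty(F_n)$ by the closed subspace $\mathrm{Hom}(F_n, \R)$ compatibly with the $\mathrm{Out}(F_n)$-actions, the conclusion transfers to $H^2_b(F_n; \R)$. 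I expect the main obstacle to be the density half: granting Grigorchuk's theorem, the delicate point is the uniform cancellation bound behind the Nielsen-pullback lemma, while for a self-contained treatment it is the construction of the $\psi_N$ together with the defect estimate governing their approximation error.
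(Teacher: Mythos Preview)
The paper does not contain a proof of this theorem. It is stated in the outlook section (\S1.6) as background and motivation, with the density part attributed to Grigorchuk \cite{Grigorchuk} and the $\mathrm{Out}(F_n)$-invariance to \cite[Section~2]{Hartnick-Schweitzer}; the present paper simply cites those references and moves on. So there is nothing here to compare your proposal against.

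That said, your outline is a reasonable sketch of how the cited proofs go. A couple of comments. For density, your description of the approximants $\psi_N$ is vague at the crucial point: you say the coefficients are ``prescribed by evaluating $\phi$ on the short periodic conjugacy classes'' and that consistency follows from the extension relations, but Grigorchuk's actual construction is more concrete (he essentially takes $\psi_N$ to be a weighted sum of $\phi_w$ over words of length $N$ with coefficients $\phi(w)/N$, up to normalization), and the error bound comes from the quasimorphism inequality applied to a cyclic decomposition of $g^k$, not from the relation space of this paper. Invoking Theorem~\ref{RelationsObvious} here is a red herring: that theorem classifies relations \emph{modulo bounded functions}, which is not what is needed to set up a consistent linear system for exact values. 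For invariance, your reduction to Nielsen generators and the ``finite-state process'' lemma is exactly the right idea and matches the approach in \cite{Hartnick-Schweitzer}; the antisymmetrization trick at the end is a clean way to land back in $\widehat{\mathcal B}(F_n)$ once you know $\phi_w\circ\alpha$ lies in $\widehat{\mathcal C}(F_n)$.
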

This motivates a closer study of the action of ${\rm Out}(F_n)$ on $\widehat{\mathcal B}(F_n)$. For example, one would like to compute the stabilizer of a general element $[f] \in \widehat{\mathcal B}(F_n)$ under ${\rm Out}(F_n)$. The ${\rm Out}(F_n)$-action on Brooks quasimorphisms is given by very explicit formulas (cf. \cite{Hartnick-Schweitzer}). However, in order to decide whether $g \in {\rm Out}(F_n)$ stabilizes $[f] \in \widehat{\mathcal B}(F_n)$, one has to be able to decide whether $g.f -f$ is bounded. By means of the algorithm developed in this article, it is now possible to decide this efficiently. We thus think that the present work provides a major step towards an understanding of the ${\rm Out}(F_n)$-action on quasimorphisms.

Of course, this is just the tip of a much larger iceberg. Analogues of Brooks quasimorphisms have been define for Gromov-hyperbolic groups \cite{EpsteinFujiwara}, various classes of groups acting on hyperbolic spaces \cite{Fujiwara, Hamenstaedt}, mapping class groups \cite{BestvinaFujiwara} and most recently for general acylindrically hyperbolic groups \cite{HullOsin, BBF}, comprising all previous constructions. In all these situations it is known that there is an infinite-dimensional subspace of the second bounded cohomology which is analogous to the Brooks space. The combinatorial fine-structure of these generalized Brooks spaces is not at all understood at this point. Even for relatively simple examples such as surface groups, we have currently no idea how a basis for the generalized Brooks space should look like.

\subsection{Organization of the article} This article is organized as follows: In Section~\ref{SecRelations} we establish Theorem \ref{RelationsObvious}, and in Section~\ref{SecBasis} we establish Theorem \ref{ThmBasis}. In both cases, we first consider the monoid case, and then deal with the additional complications in the group case. In Section~\ref{SecTrees} we explain how sums of counting functions can be represented graphically as finite weighted trees. Here the main result is Theorem \ref{Unbalanced} which singles out a large class of such trees which represent non-trivial elements in $\widehat{\mathcal C}(M_n)$ and $\widehat{\mathcal C}(F_n)$. Based on this result, we present in Section \ref{SecAlgorithmsNaive} an algorithm to decide whether a given sum of counting functions is bounded. The appendix collects some basic facts about homogenizations of counting functions used throughout the body of the text.

\subsection{Acknowledgements.} We would like to thank Tatiana Smirnova-Nagnibeda for pointing out the reference \cite{Martin} to us. We also thank Danny Calegary, Anton Hase, Pascal Schweitzer and Alden Walker for useful discussions. We thank the Technion for providing excellent working conditions during several visits of the second author. Tobias Hartnick was supported as a Taub fellow by Taub foundation and Alexey Talambutsa was supported by Swiss National Science Foundation project PP00P2-128309/1 and Russian Foundation for Basic Research.

\section{Relations between counting functions}\label{SecRelations}

\subsection{Basic relations between counting functions}\label{SecBasicRelations}  The goal of this section is to establish Theorem~\ref{RelationsObvious}, i.e. to determine all relations between counting functions and counting quasimorphisms. Parts (i), (ii) and (iii) of the theorem will be established in Subsections \ref{SubsecRelMonoids}, \ref{SubsecRelGroups} and \ref{SubsecRelQM} respectively. Informally, the theorem states that every relation between counting functions is a consequence of certain basic relations. In this subsection we briefly explain these basic relations. We start with the case of monoids.
\begin{lem} For every $w \in M_n$ the left-extension relation $l_w$ and the right-extension relation $r_w$ as defined in \eqref{KirchhoffMonoid} are contained in the relation space $K(M_n)$.
\end{lem}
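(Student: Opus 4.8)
The plan is to verify directly that $\mathfrak q(l_w)=0$ and $\mathfrak q(r_w)=0$; equivalently, that the functions $\rho_w-\sum_{s\in S}\rho_{sw}$ and $\rho_w-\sum_{s\in S}\rho_{ws}$ on $M_n$ are bounded. In fact I expect to prove the sharper statement that each of these functions takes only the values $0$ and $1$. I will carry out the argument for $l_w$; the statement for $r_w$ follows by the same argument applied to words read from right to left, so only $l_w$ needs a separate write-up.

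First I would dispose of the degenerate case $w=e$. By the convention fixed in the Remark, $l_e=\delta_e-\sum_{s\in S}\delta_s$, and since $\rho_e(u)=|u|_S$ equals the total number of one-letter occurrences $\sum_{s\in S}\rho_s(u)$ in $u$, we get $\rho_e-\sum_{s\in S}\rho_s\equiv 0$; this is precisely the identity already used in the proof of Proposition~\ref{TrivialBases}, so $l_e=r_e\in K(M_n)$. For the main case assume $|w|=l\ge 1$ and fix an arbitrary $u\in M_n$. The key observation is a position-wise bijection (which is insensitive to overlaps since it only manipulates starting positions): an occurrence of $w$ in $u$ that does not begin at the first letter of $u$ is preceded by a unique letter $s\in S$, and this gives an occurrence of the word $sw$ in $u$ shifted one step to the left; conversely, any occurrence of $sw$ in $u$ (with $s\in S$) truncates to an occurrence of $w$ beginning at position $\ge 2$. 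Summing over $s$, this yields
\[
\rho_w(u)=\chi_w(u)+\sum_{s\in S}\rho_{sw}(u),
\]
where $\chi_w(u)\in\{0,1\}$ is $1$ exactly when $w$ is a prefix of $u$. Hence $\mathfrak q(l_w)=[\chi_w]=0$, and symmetrically $\mathfrak q(r_w)=0$, so $l_w,r_w\in K(M_n)$.

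The only delicate point is purely bookkeeping: the definition \eqref{subword} of an occurrence parametrizes positions by an index $j$ in a specific range, and one has to match this range carefully against the corresponding range for $\rho_{sw}$ so that ``an occurrence of $sw$ at index $j$'' correctly becomes ``an occurrence of $w$ at index $j+1$'', including the boundary indices and the edge cases $|u|<l$ and $|u|=l$. I expect this off-by-one accounting to be the main (and essentially the only) obstacle, and it is routine.
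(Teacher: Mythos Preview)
Your proof is correct and follows essentially the same approach as the paper: handle $w=e$ via $\rho_e=\sum_{s\in S}\rho_s$, and for $|w|\ge 1$ observe that $\rho_w(u)-\sum_{s\in S}\rho_{sw}(u)$ is the $\{0,1\}$-valued prefix indicator $\chi_w(u)$ (with the symmetric suffix statement for $r_w$). Your write-up is somewhat more explicit about the underlying bijection of occurrence positions, but the argument is the same.
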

\begin{proof} For $w = e \in M_n$ we have $\mathfrak q(l_w) = \mathfrak q(r_{w})=0$, since adding the counts of letters of a words yields the word length.
If $w \in M_n$ with $|w|_S \geq 1$ then for every $v \in M_n$ the difference
\[ \rho_{w}(v) - \sum_{s \in S} \rho_{sw}(v)\]
takes value $1$ or $0$ depending on whether $v$ starts with $w$ or not. Similarly,
\[ \rho_{w}(v) - \sum_{s \in S} \rho_{ws}(v)\]
takes value $1$ or $0$ depending on whether $v$ ends with $w$ or not. Since these are bounded functions we deduce that $\mathfrak q(l_{w}) = \mathfrak q(r_{w}) =0$.
\end{proof}
In the group case we have the following similar result:
\begin{lem} For every $w \in F_n$ the left-extension relation $l_w$ and the right-extension relation $r_w$ as defined in \eqref{KirchhoffGroup} are contained in the relation space $K(F_n)$. Moreover, the relations $l_w$, $r_w$ and the relation $s_w$ defined in \eqref{KirchhoffGroup} are contained in $K_{\rm sym}(F_n)$.
\end{lem}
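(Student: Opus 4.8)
The plan is to mirror the proof just given in the monoid case, paying attention to the one new feature of the free group: prepending or appending a letter to a reduced word may create a cancellation. Fix $w \in F_n \setminus \{e\}$ with initial letter $w_1$ and final letter $w_{\rm fin}$. First I would show that the function
\[
\rho_w \;-\; \sum_{s \in \bar S \setminus \{w_1^{-1}\}} \rho_{sw}
\]
on $F_n$ is the indicator function of the set of those $v \in F_n$ whose reduced word begins with the reduced word of $w$; in particular it takes only the values $0$ and $1$, so it is bounded and $q(l_w) = 0$, i.e. $l_w \in K(F_n)$. To see this, note that for $s \in \bar S$ the word $sw$ is reduced precisely when $s \neq w_1^{-1}$, and that inside a reduced word $v$ every occurrence of $w$ is either the prefix occurrence or is immediately preceded by a letter $s$, which then forms a reduced pair with $w_1$ and so satisfies $s \neq w_1^{-1}$; sending such a non-prefix occurrence of $w$ to this letter $s$ together with the resulting occurrence of $sw$ one place to the left is a bijection between the non-prefix occurrences of $w$ in $v$ and all occurrences in $v$ of the words $sw$ with $s \in \bar S \setminus \{w_1^{-1}\}$. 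The argument for $r_w$ is the same with ``prefix'' replaced by ``suffix'' and $w_1$ by $w_{\rm fin}$. The degenerate case $w = e$ is handled by the conventions from the remark following Theorem~\ref{RelationsObvious}: there $l_e = r_e = \delta_e - \sum_{s \in \bar S}\delta_s$, and $\rho_e = |\cdot|_S = \sum_{s \in \bar S}\rho_s$ since the number of letter-occurrences in a reduced word equals its length, whence $q(l_e) = q(r_e) = 0$.

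For the symmetrized statement I would reduce formally to the first part. Let $A \colon \R[F_n] \to \R[F_n]$ be the linear map determined by $A(\delta_g) = \delta_g - \delta_{g^{-1}}$. Since $q_{\rm sym}(\delta_g) = [\phi_g] = [\rho_g] - [\rho_{g^{-1}}] = q(\delta_g - \delta_{g^{-1}})$, we have $q_{\rm sym} = q \circ A$, so it suffices to check that $A$ carries $l_w$, $r_w$ and $s_w$ into $K(F_n) = \ker q$. Using $(sw)^{-1} = w^{-1}s^{-1}$ and the fact that $s \mapsto s^{-1}$ maps $\bar S \setminus \{w_1^{-1}\}$ bijectively onto $\bar S \setminus \{w_1\} = \bar S \setminus \{(w^{-1})_{\rm fin}^{-1}\}$, a short computation gives $A(l_w) = l_w - r_{w^{-1}}$ and, symmetrically, $A(r_w) = r_w - l_{w^{-1}}$, both of which lie in $K(F_n)$ by the first part; moreover $A(s_w) = (\delta_w - \delta_{w^{-1}}) + (\delta_{w^{-1}} - \delta_w) = 0 \in K(F_n)$. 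Hence $l_w, r_w, s_w \in K_{\rm sym}(F_n)$, the case $w = e$ being immediate since $q_{\rm sym}(\delta_e) = [\phi_e] = 0$. One could also argue directly, without $A$: $\phi_w + \phi_{w^{-1}} = 0$ as functions on $F_n$, while $\phi_w - \sum_s \phi_{sw}$ is bounded because by the first paragraph both $\rho_w - \sum_s \rho_{sw}$ and $\rho_{w^{-1}} - \sum_s \rho_{w^{-1}s^{-1}}$ are.

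I do not anticipate a genuine obstacle: this is the easy half of Theorem~\ref{RelationsObvious}, where one merely exhibits certain elements as relations rather than proving there are no others. The only point requiring care is the occurrence-counting bookkeeping in the first paragraph, and in particular the reducedness constraint --- that $sw$ is reduced exactly for $s \neq w_1^{-1}$, and that a non-prefix occurrence of $w$ in a reduced word is always preceded by a letter other than $w_1^{-1}$. These are precisely the observations that force the single letter $w_1^{-1}$ (respectively $w_{\rm fin}^{-1}$) to be removed from the sums in \eqref{KirchhoffGroup}, and this is where the free-group argument diverges from the monoid one; the boundary conventions for $w = e$ should also be kept in mind throughout.
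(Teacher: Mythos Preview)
Your proof is correct and follows essentially the same approach as the paper: the first part is the same occurrence-counting argument as in the monoid case (the paper merely asserts this, whereas you spell out the bijection between non-prefix occurrences of $w$ and occurrences of the $sw$), and for the second part your identity $A(l_w)=l_w-r_{w^{-1}}$ is exactly the paper's computation $q_{\rm sym}(l_w)=q(l_w)-q(r_{w^{-1}})$, just packaged via the auxiliary map $A$.
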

\begin{proof} The first statement is proved exactly as in the monoid case: We have $q(l_e) =q(r_{e})=0$ since the word length can be obtained by adding up the counts for all possible letter, and if $|w|_S \geq 1$, then $q(l_w)$ and $q(r_w)$ can be represented by a function taking only values $0$ and $1$.

Concerning $K_{\rm sym}(F_n)$ we can argue as follows: Since $\phi_w = \rho_w - \rho_{w^{-1}}$ we have
\[
q_{\rm sym}(l_w) = q(l_w)-q(r_{w^{-1}})=0
\]
by the result about $K(F_n)$. Dually we obtain $q_{\rm sym}(l_w) = 0$. Finally, $q_{\rm sym}(s_w)$ is represented by the function
\[
\phi_w + \phi_{w^{-1}} =  \rho_w - \rho_{w^{-1}} +\rho_{w^{-1}}-\rho_{w}=0. \qedhere
\]
\end{proof}
From now on we denote by $B(M_n) \subset K(M_n)$ and $B(F_n) \subset K(F_n)$ the respective subspaces spanned by corresponding left- and right-extension relations $\{l_w, r_w\}$. We also denote by $B_{\rm sym}(M_n) \subset K_{\rm sym}(F_n)$ the subspace generated by the relations $\{l_w, r_w, s_w\}$. In this notation our goal is to establish the equalities $B(M_n) = K(M_n)$, $B(F_n) = K(F_n)$ and $B_{\rm sym}(M_n)=K_{\rm sym}(M_n)$.

\subsection{Pure elements}\label{SecRelationsIntro}
From now on we fix an integer $n \geq 2$ and a set $S =\{a_1, \dots, a_n\}$ of cardinality $n$. We then denote by $M_n$, respectively $F_n$, the free monoid, respectively free group, on $S$. Given an integer $L \geq 0$, an element $f \in \R[M_n]$ will be called \emph{pure} of \emph{length} $L$ if $|w| = L$ for all $w \in {\rm supp}(f)$. Thus for example $\delta_{a_1a_2}-3\delta_{a_2a_1}$ is pure of length $2$. Similarly, $\R[F_n]_L$ denotes the space of all pure finitely supported real-valued functions on $F_n$ of length $L$. We also introduce the notations
\begin{eqnarray*}
K_L(M_n) := K(M_n) \cap \R[M_n]_L, && B_L(M_n) := B(M_n) \cap K_L(M_n), \\ K_L(F_n) := K(F_n) \cap \R[F_n]_L, && B_L(F_n) := B(F_n) \cap K_L(F_n),
\end{eqnarray*}
where the basic relation spaces $B(M_n)$ and $B(F_n)$ are defined as in the previous subsection. When the monoid or group in question is clear from the context we simply write $K$, $B$, $K_L$, $B_L$. Note that the spaces $K_L$ and $B_L$ are finite-dimensional for each $L \geq 0$.
\begin{lem}\label{BEqualsK}
If $\dim B_L \geq \dim K_L$ for all $L \geq 0$, then $B=K$.
\end{lem}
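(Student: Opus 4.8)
The plan is to prove the stronger statement that $B_{\le L}=K_{\le L}$ for every $L\ge 0$, by induction on $L$; here $V_{\le L}:=V\cap\R[M_n]_{\le L}$, where $\R[M_n]_{\le L}$ is the span of all $\delta_v$ with $|v|\le L$ (and analogously for $F_n$). Since every finitely supported function has bounded support, $B=\bigcup_L B_{\le L}$ and $K=\bigcup_L K_{\le L}$, so this yields $B=K$. The base case $L=0$ is immediate: $\R[M_n]_{\le 0}=\R\delta_e$ and $\delta_e\notin K$, because $\rho_e$ is the (unbounded) word-length function; hence $B_{\le 0}=K_{\le 0}=0$.

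For the inductive step, assume $B_{\le L-1}=K_{\le L-1}$ and let $\pi_L\colon\R[M_n]_{\le L}\to\R[M_n]_L$ be projection onto the top-degree part. Since $\ker(\pi_L|_{V_{\le L}})=V_{\le L-1}$, rank--nullity gives $\dim V_{\le L}=\dim V_{\le L-1}+\dim\pi_L(V_{\le L})$ for $V\in\{B,K\}$; using the inductive hypothesis, it therefore suffices to show $\dim\pi_L(B_{\le L})=\dim\pi_L(K_{\le L})$, after which the inclusion $\pi_L(B_{\le L})\subseteq\pi_L(K_{\le L})$ (both spaces finite-dimensional) forces equality and hence $B_{\le L}=K_{\le L}$. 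I would obtain the dimension equality from two facts. First, $\pi_L(K_{\le L})=K_L+\pi_L(B_{\le L})$: given $x=\pi_L(f)$ with $f=x+y\in K$, $x\in\R[M_n]_L$ and $y\in\R[M_n]_{\le L-1}$, one uses the right-extension relations to replace each $\delta_v$ occurring in $y$ by $\sum_{|u|=L-|v|}\delta_{vu}$, this being a telescoping combination of relations $r_{vu'}$ all supported in degrees $\le L$; the resulting degree-$L$ element $y'$ then satisfies $y'-y\in B_{\le L}$, so $y'=\pi_L(y')=\pi_L(y'-y)\in\pi_L(B_{\le L})$, while $x+y'=(x+y)+(y'-y)\in K\cap\R[M_n]_L=K_L$, whence $x=(x+y')-y'\in K_L+\pi_L(B_{\le L})$; the reverse inclusion is obvious. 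Second, $K_L\cap\pi_L(B_{\le L})=B_L$: if $x=\pi_L(b)$ with $b\in B_{\le L}$ and $x\in K$, then $b-x\in K\cap\R[M_n]_{\le L-1}=K_{\le L-1}=B_{\le L-1}$ by the inductive hypothesis, so $x=b-(b-x)\in B\cap\R[M_n]_L=B_L$.

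Together these give $\dim\pi_L(K_{\le L})=\dim K_L+\dim\pi_L(B_{\le L})-\dim B_L$. Since $B_L\subseteq K_L$, the hypothesis $\dim B_L\ge\dim K_L$ forces $\dim B_L=\dim K_L$, and therefore $\dim\pi_L(K_{\le L})=\dim\pi_L(B_{\le L})$, completing the induction and the proof. The one point that I expect to need genuine care is the identity $\pi_L(K_{\le L})=K_L+\pi_L(B_{\le L})$: one must check that ``pushing up'' the lower-degree tail of a relation through the $r_v$'s keeps all intermediate terms within degree $\le L$ (and, in the free group case, involves only reduced words), so that the correction $y'-y$ really lies in $B_{\le L}$ and not merely in $B$. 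Everything else is rank--nullity together with the evident inclusions $B_{\le L}\subseteq K_{\le L}$ and $B_L\subseteq K_L$.
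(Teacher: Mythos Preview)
Your argument is correct, and the essential mechanism---pushing the lower-degree part of a relation up to top degree using the right-extension relations $r_v$---is exactly what the paper uses. However, the paper's organisation is considerably more direct. It first observes that the hypothesis $\dim B_L\ge\dim K_L$ together with the inclusion $B_L\subseteq K_L$ immediately gives $B_L=K_L$ for every $L$. Then, given any $r\in K$, the same push-up argument you describe produces $b\in B$ with $r-b$ pure of some length $L$; hence $r-b\in K_L=B_L\subseteq B$, so $r\in B$. No induction, no projections, no rank--nullity. In your version the equality $B_L=K_L$ only emerges at the very end of the dimension count, which forces you through the auxiliary identities $\pi_L(K_{\le L})=K_L+\pi_L(B_{\le L})$ and $K_L\cap\pi_L(B_{\le L})=B_L$; once you notice $B_L=K_L$ at the outset, the first identity alone already gives $\pi_L(K_{\le L})=\pi_L(B_{\le L})$ (since $K_L=B_L\subseteq\pi_L(B_{\le L})$), and the second identity becomes superfluous. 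Your concern about keeping the push-up inside degree $\le L$ and inside reduced words in the $F_n$ case is valid and is the one genuine verification required in either approach, but it is routine.
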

\begin{proof}  Since $ B_L \subset  K_L$ the assumption of (i) implies $ B_L =  K_L$ for all $L$. Now if $r \in  K$ is any function, then by adding elements of $ B$ we can always achieve that $r$ is pure. Thus if $ B_L =  K_L$ for all $L \geq 0$, then
\[ K/ B \subseteq  \left(\sum_L  K_L\right)/ B=\sum_L  K_L/( B \cap  K_L) =\sum_L  K_L/ B_L= 0. \qedhere\]
\end{proof}
This reduces the proof of the first two parts of Theorem~\ref{RelationsObvious} to an estimate of the dimensions of the finite-dimensional vector spaces $B_L$ and $K_L$. We now carry out the necessary estimates, first  in Subsection~\ref{SubsecRelMonoids} for $M_n$ and then  in Subsection~\ref{SubsecRelGroups} for $F_n$. The argument is basically the same in both cases, but in the case of $F_n$ some additional care has to be taken because of potential cancellations. Once the relations between counting functions are determined, it is easy to also determine the relations between counting quasimorphisms. This will be carried out in Subsection~\ref{SubsecRelQM}.

\subsection{Relations between counting functions on free monoids}\label{SubsecRelMonoids}
The goal of this subsection is to establish Part (i) of Theorem~\ref{RelationsObvious} concerning the space of relations between counting functions on monoids. We fix $n \geq 2$ and consider the free monoid $M_n$ with generating set $S = \{a_1, \dots, a_n\}$. We also use the notation introduced in Subsection \ref{SecRelationsIntro} and write $K$, $B$, $K_L$ and $B_L$ for $K(M_n)$, $B(M_n)$, $K_L(M_n)$ and $B_L(M_n)$. By Lemma~\ref{BEqualsK} it suffices to establish $\dim B_L \geq \dim K_L$. For $L = 0$ we have $\dim K_L = 0$, so there is nothing to show. For $L \geq 1$ we are going to show that
\begin{equation}\label{MonoidBLvsKL}
\dim B_L \geq n^{L-1}-1 \geq \dim K_L
\end{equation}
by first establishing a lower bound for $\dim B_L$, and then establishing an upper bound for $\dim K_L$.

\textsc{Step 1 (Lower bound for $\dim B_L$):} We now establish the first inequality in \eqref{MonoidBLvsKL} for all $L \geq 1$. For $L = 1$ there is nothing to show, thus we will assume $L \geq 2$. Given any word $w$ of length $L-1$ we have
\[b_w := r_{w}-l_{w}\in B_L.\]
This defines $n^{L-1}$ elements in $B_L$, and we claim that their span $B_L^0$ has dimension precisely $n^{L-1}-1$. If we write each of the elements $b_w$ as
\[b_w = \sum_{|v| = L} \lambda_{w,v} \delta_v,\]
then this amounts to showing that the $n^{L-1} \times n^{L}$-matrix
\begin{equation}\label{ALMn}
A_{L}(M_n) = (\lambda_{w,v})\end{equation}
has rank $n^{L-1}-1$.
\begin{example} The matrix $A_3(M_2)$ has the form
\[
\begin{array}{crrrrrrrr}
&a_1a_1a_1&a_1a_1a_2&a_1a_2a_1&a_1a_2a_2&a_2a_1a_1&a_2a_1a_2&a_2a_2a_1&a_2a_2a_2\\
a_1a_1&0 & -1 & 0 & 0&1&0&0&0\\
a_1a_2& 0&1&-1&-1&0&1&0&0\\
a_2a_1& 0&0&1&0&-1&-1&1&0\\
a_2a_2&0&0&0&1&0&0&-1&0
\end{array}
\]
\end{example}
As is apparent in the example, the structure of the matrix $A_L$ is very special: observe that $\lambda_{w,v} \neq 0$ only if $w$ is a maximal proper subword of $v$. Thus each column contains either one $+1$ and one $-1$ (if deleting the first and the last letter lead to different words), or no non-zero entry at all (if deleting the first and last letter lead to the same word). The latter actually happens only if $v=w$ is a power of some $a_j$. As a first consequence of this special structure, we see that the sum of the rows is $0$, whence ${\rm rank}(A_L(M_n)) \leq n^{L-1}-1$. The converse inequality can be reformulated in graph theoretic terms:
\begin{lem} \label{GraphReformulation}
Let $\Gamma_L(M_n)$ be the graph whose vertices are words $w$ of length $L-1$, and in which two different words $w$ and $w'$ are joined by an edge iff there exists a column of the matrix $A_L(M_n)$ with non-zero entries in the rows corresponding to both $w$ and $w'$. Then ${\rm rank}(A_L(M_n))= n^{L-1}-1$ if and only if the graph $\Gamma_L(M_n)$ is connected.
\end{lem}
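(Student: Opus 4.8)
The plan is to recognize the matrix $A_L(M_n)$ as the vertex–edge incidence matrix of a directed graph and then invoke the classical formula relating the rank of an incidence matrix to the number of connected components of the underlying undirected graph.

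First I would make this identification explicit. Using the definitions \eqref{KirchhoffMonoid}, for a word $w$ of length $L-1$ one has
\[
b_w = r_w - l_w = \sum_{s \in S}\bigl(\delta_{sw} - \delta_{ws}\bigr),
\]
so that for a word $v$ of length $L$ the coefficient $\lambda_{w,v}$ of \eqref{ALMn} equals $+1$ if $w$ is obtained from $v$ by deleting the first letter, equals $-1$ if $w$ is obtained from $v$ by deleting the last letter, and these two contributions cancel precisely when $v = a_j^L$ is a constant word, in which case the entire column of $A_L(M_n)$ vanishes. I would therefore introduce the directed graph $\vec{\Gamma}_L(M_n)$ whose vertex set is the set of words of length $L-1$ and which has, for every non-constant word $s_1\cdots s_L$ of length $L$, one directed edge from $s_1\cdots s_{L-1}$ to $s_2\cdots s_L$ (this is just the de Bruijn graph on length-$(L-1)$ words over $S$, with loops removed). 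Then $A_L(M_n)$, up to the irrelevant zero columns coming from the constant words, is exactly the incidence matrix of $\vec{\Gamma}_L(M_n)$, with the convention that each edge contributes $+1$ in the row of its head and $-1$ in the row of its tail, and by construction the underlying undirected graph of $\vec{\Gamma}_L(M_n)$ coincides with $\Gamma_L(M_n)$ as defined in the statement.

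Next I would recall, and for completeness sketch, the standard fact that the incidence matrix of a directed graph with $N$ vertices has rank $N - c$, where $c$ is the number of connected components of the underlying undirected graph. The bound $\mathrm{rank} \leq N - c$ holds because, for each component $C$, the sum $\sum_{w \in C}(\text{row }w)$ vanishes: every edge lies inside a single component and contributes a $+1$ and a $-1$ to the sum of the rows of that component and nothing to the sums for the other components. These $c$ relations have disjoint supports, hence are independent, so the row space has dimension at most $N - c$. For the reverse inequality one chooses a spanning forest, which has $N - c$ edges, and shows the corresponding columns are linearly independent by repeatedly picking a leaf of the forest: the unique forest edge incident to that leaf is the only remaining column with a nonzero entry in the leaf's row, forcing its coefficient to be zero, after which one deletes the leaf and induces. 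Applying this with $N = n^{L-1}$ yields $\mathrm{rank}(A_L(M_n)) = n^{L-1} - c(\Gamma_L(M_n))$, and since $\Gamma_L(M_n)$ is non-empty we have $c(\Gamma_L(M_n)) \geq 1$, so $\mathrm{rank}(A_L(M_n)) = n^{L-1}-1$ if and only if $c(\Gamma_L(M_n)) = 1$, that is, if and only if $\Gamma_L(M_n)$ is connected.

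I do not expect a genuine obstacle here: the content of the lemma is essentially the passage to the linear algebra of incidence matrices. The only points that require a little care are the bookkeeping of the zero columns attached to the constant words $a_j^L$, and the verification that the combinatorial graph $\Gamma_L(M_n)$ of the statement really is the underlying undirected graph of the incidence structure of $A_L(M_n)$; once these are in place the argument is routine.
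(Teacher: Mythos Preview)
Your argument is correct and is essentially the paper's approach, just packaged differently: you recognize $A_L(M_n)$ as (up to zero columns) the incidence matrix of a directed graph and invoke the standard formula $\mathrm{rank}=N-c$, whereas the paper argues the same content directly, showing by propagating along edges that any row dependency must assign equal coefficients to all vertices in a connected component. Both hinge on the same structural observation about the columns (exactly one $+1$ and one $-1$ in distinct rows, except for the zero columns from constant words), so there is no genuine difference in strategy.
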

\begin{proof} Assume that the graph is connected and that some linear combination of rows involving the $w$-th row is $0$. Then every non-zero entry of the $w$-th row has to be cancelled. However, since for each of these entries there is only one other row containing it, this row has to be involved in the linear combination. The upshot is that if $v$ and $w$ are connected by an edge in $\Gamma_L$, then every linear combination of rows involving $w$ which adds up to $0$-row must also involve $v$, and the coefficients for $v$ and $w$ in this sum have to be the same. We deduce that if $\Gamma_L$ is connected then no proper subset of rows is linearly dependent, whence ${\rm rank}(A_L(M_n))= n^{L-1}-1$. Conversely, if $\Gamma'$ is a connected component of $\Gamma_L(M_n)$, then adding up the rows corresponding to the vertices of $\Gamma'$ yields $0$. In particular ${\rm rank}(A_L(M_n))= n^{L-1}-1$ implies that there is only one connected component.
\end{proof}
The following picture shows the graph $\Gamma_3(M_2)$ corresponding to the matrix $A_3(M_2)$ above:
\begin{center}
\begin{tikzpicture}
  [scale=.8,auto=left,every node/.style={circle,fill=blue!15}]
  \node(0) at (3,1) {$a_2a_2$};
  \node (1) at (3,3) {$a_1a_2$};
  \node (2) at (1,1)  {$a_2a_1$};
  \node (3) at (1,3)  {$a_1a_1$};
  \foreach \from/\to in {0/1, 0/2, 1/2, 1/3, 2/3}
    \draw (\from) -- (\to);
\end{tikzpicture}
\end{center}
The graphs $\Gamma_L(M_n)$ are closely related to a family of classical examples in finite graph theory called \emph{de Bruijn graphs} \cite{deBruijn}. Recall that the \emph{$L$-th de Bruijn graph over $S$} is the graph $\Gamma_L(S)$ whose vertices are words of length $L-1$ and whose edges are words of length $L$ connecting the subwords obtained by deleting the first, respectively last letter. We claim that the graph  $\Gamma_L(M_n)$ can be obtained from $\Gamma_L(S)$ by erasing all loops and multiple edges. Indeed, vertices $w$ and $w'$ of $\Gamma_L(M_n)$ are connected by an edge if and only if there exist $s_1, s_2 \in S$ such that either $ws_1 = s_2w'$ or $s_1w = w's_2$, but not both. Then the claim follows from the fact that the latter case can only happen if if $w = w'$ is a power of some letter $a_j$. We thus refer to $\Gamma_L(M_n)$ as a \emph{loop-erased de Bruijn graph}.

Since erasing loops and multiple-edges does not change connectivity of a graph, it remains only to show connectedness of the de Bruijn graphs. This is a classical fact from finite graph theory. Explicitly,  two words $w = s_1 \cdots s_{L-1}$ and $w' = r_{1} \cdots r_{L-1}$ can be connected through the path
\[s_1\cdots s_{L-1} \sim r_{L-1}s_1\cdots s_{L-2} \sim r_{L-2}r_{L-1}s_1 \cdots s_{L-3} \sim \dots \sim r_1\cdots r_{L-1}.\]
We thus deduce from Lemma \ref{GraphReformulation} that
\[\dim B_L \geq \dim B_L^0 = {\rm rank}(A_L(M_n))= n^{L-1}-1.\]
This finishes \textsc{Step 1}.\\

\textsc{Step 2 (Upper bound for $\dim K_L$):} We are now going to establish the second inequality in \eqref{MonoidBLvsKL} for all $L \geq 1$. Rather than showing directly that $\dim K_L \leq n^{L-1}-1$ we will show that the codimension of $K_L$ in $\R[M_n]_L$ is bounded below by
\begin{equation}\label{codimMonoid}
{\rm codim}\, K_L \geq \dim \R[M_n]_L - (n^{L-1}-1) = (n-1)\cdot n^{L-1}+1.
\end{equation}
In order to establish \eqref{codimMonoid} we will construct $(n-1)\cdot n^{L-1}+1$ linearly independent linear functionals on $\R[M_n]_L$, which vanish on $K_L$. Such a linear functional will be called a \emph{certificate}.

We now describe a way to construct certificates using homogenization. Recall from the appendix that a function $f: M_n \to \R$ is called \emph{homogenizable} if the limit
\begin{equation}\label{homexplicit}
\widehat{f}(x) := \lim_{n \to \infty} \frac{f(x^n)}{n}
\end{equation}
exists for every $x \in M_n$, and in this case the function $\widehat{f}: M_n \to \R$ defined by \eqref{homexplicit} is called the \emph{homogenization} of $f$. By Corollary \ref{HomogenizationConvenient} every function $f \in \mathcal C(M_n)$ is homogenizable. Given $c \in M_n$ we me thus define a linear functional
\begin{equation}\label{cCert}
\langle c \rangle_L : \R[M_n]_L \to \R, \quad \sum \lambda_g \delta_g \mapsto \sum \lambda_g \widehat{\rho_g}(c)
\end{equation}
by evaluation of the homogenization at $c$.
\begin{lem} For every $c \in M_n$ and $L \geq 1$ the functional  $\langle c \rangle_L \in (\R[M_n]_L)^*$ is a certificate, i.e. it vanishes on $K_L$.
\end{lem}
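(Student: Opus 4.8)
The plan is to deduce the lemma directly from two facts about homogenization recorded in the appendix: that every function in $\mathcal C(M_n)$ is homogenizable (Corollary~\ref{HomogenizationConvenient}), and that homogenization is linear on the space of homogenizable functions. Everything else is just unwinding definitions, so this should be short.

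First I would unwind what membership in $K_L$ means. If $f = \sum \lambda_g \delta_g \in K_L = K(M_n) \cap \R[M_n]_L$, then in particular $f \in \ker(\mathfrak q)$, which by the very definition of $\mathfrak q$ says that the function $F := \sum \lambda_g \rho_g$ is bounded on $M_n$. Now, by Corollary~\ref{HomogenizationConvenient} each $\rho_g$ is homogenizable, hence so is the finite linear combination $F$, and linearity of homogenization gives $\widehat{F} = \sum \lambda_g \widehat{\rho_g}$. Evaluating at $c$ therefore yields $\langle c \rangle_L(f) = \sum \lambda_g \widehat{\rho_g}(c) = \widehat{F}(c)$.

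The second and final step is the elementary observation that a bounded function homogenizes to zero: if $|F| \le C$ everywhere, then by the explicit formula \eqref{homexplicit} we have $\widehat{F}(c) = \lim_{k \to \infty} F(c^k)/k$, and $|F(c^k)/k| \le C/k \to 0$, so $\widehat{F}(c) = 0$. Combining the two steps, $\langle c \rangle_L(f) = \widehat{F}(c) = 0$ for every $c \in M_n$ and every $f \in K_L$, which is exactly the assertion that $\langle c \rangle_L$ is a certificate.

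There is no genuine obstacle here. The only point that needs attention is that the appendix really does supply linearity of homogenization on the class of homogenizable functions — but this is in any case immediate from the definition, since for fixed $x$ the limit $\lim_{k\to\infty} f(x^k)/k$ is linear in $f$. Conceptually, the whole lemma is the statement that homogenization annihilates bounded functions, hence descends to a well-defined linear map on $\widehat{\mathcal C}(M_n)$, of which the functionals $\langle c \rangle_L$ are merely the coordinate evaluations restricted to the pure part $\R[M_n]_L$.
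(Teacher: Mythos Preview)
Your proof is correct and essentially identical to the paper's own argument: both unwind the definition of $K_L$ to obtain a bounded sum of counting functions, invoke Corollary~\ref{HomogenizationConvenient} to compute its homogenization linearly as $\sum \lambda_g \widehat{\rho_g}$, and conclude by observing that bounded functions homogenize to zero. The only difference is cosmetic --- you spell out the $C/k \to 0$ estimate explicitly, whereas the paper absorbs it into the citation of the corollary.
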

\begin{proof} Let $f_0=\sum \lambda_g \delta_g \in K_L$. By definition, this means that the function
\[
f:=  \sum \lambda_g\rho_g \in {\mathcal C}(M_n)
\]
is bounded. Consequently, the homogenization $\widehat{f}$ satisfies $\widehat{f} \equiv 0$. We deduce that, for every $c \in M_n$,
\[
0 = \widehat{f}(c) =  \sum \lambda_g\widehat{\rho_g}(c)= \langle c \rangle_L \left( \sum_{g \in M_n} \lambda_g\delta_g\right) =  \langle c \rangle_L(f_0),
\]
which shows that $f_0 \in \ker( \langle c \rangle_L)$ and finishes the proof.
\end{proof}
In view of the lemma we refer to $\langle c\rangle_L$ as the \emph{$L$-certificate of $c$}. It remains to show that there exists $(n-1)\cdot n^{L-1}+1$ elements of $M_n$ whose corresponding $L$-certificates are linearly independent.
For this we start from the set
\[W_L := \{a_iw\mid i \neq 1, \; |w| = L-1\} \cup \{a_1^L\}\]
of \emph{special words} and define the associated set of certificates to be
\[C_L := \{ \langle ca_1^L \rangle_L \,|\, c \in W_L\}.\]
Note that $|C_L| = |W_L| = (n-1)\cdot n^{L-1} +1$. We will show that the certificates in $C_L$ are linearly independent, thereby finishing the proof. For our computations in the dual space $\R[M_n]_L^*$ we will denote by $\{[w]\mid w \in S^L\}$ the dual basis to the basis $\{\delta_w\mid w \in S^L\}$, i.e.
\[
[w](\delta_v) = \left\{\begin{array}{ll}1, & v=w\\0, &\text{else}\end{array}\right..
\]
As an immediate consequence of Lemma \ref{HomogenizationMonoid}, we can write our certificates in terms of this dual basis as follows:
\begin{lem} \label{CertificateFormula}
Let $c \in M_n$. Then
\[
\langle c \rangle_L = \sum [w],
\]
where $w$ runs through all cyclic subwords of $c$ of length $L$ with multiplicity.\qed
\end{lem}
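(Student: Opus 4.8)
The plan is to evaluate both sides of the asserted identity on the standard basis $\{\delta_g \mid g \in S^L\}$ of $\mathbb{R}[M_n]_L$ and to check that they agree term by term; since a linear functional is determined by its values on a basis, this suffices. First I would unwind definition~\eqref{cCert}: for $g \in S^L$ one has $\langle c\rangle_L(\delta_g) = \widehat{\rho_g}(c)$, the value at $c$ of the homogenization of the $g$-counting function. The whole point is then that $\widehat{\rho_g}(c)$ equals the number of cyclic subwords of $c$ of length $L$ that are equal to $g$, counted with multiplicity --- and this is precisely (a special case of) Lemma~\ref{HomogenizationMonoid}, which is why the statement is flagged as an immediate consequence.

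For the reader's convenience I would also recall the short reason behind that homogenization formula. Writing $\ell := |c|$, an occurrence of $g$ in $c^m$ starts at a position $p \in \{1, \dots, m\ell - L + 1\}$, and the length-$L$ block of $c^m$ beginning at $p$ depends only on $p \bmod \ell$, equalling $g$ exactly when the cyclic subword of $c$ of length $L$ based at the corresponding position $j \in \{1,\dots,\ell\}$ equals $g$. Each admissible $j$ contributes $m + O_L(1)$ valid positions $p$, so $\rho_g(c^m) = \bigl(\#\{\text{cyclic subwords of }c\text{ of length }L\text{ equal to }g\}\bigr)\, m + O_L(1)$; dividing by $m$ and passing to the limit gives the claimed value of $\widehat{\rho_g}(c)$.

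Finally, list the cyclic subwords of $c$ of length $L$ with multiplicity as $w_1,\dots,w_\ell$ (so $w_j$ reads $L$ consecutive letters around $c$ starting at position $j$). Then $\bigl(\sum_{j=1}^{\ell}[w_j]\bigr)(\delta_g) = \#\{j \mid w_j = g\} = \widehat{\rho_g}(c) = \langle c\rangle_L(\delta_g)$ for every $g \in S^L$, so $\langle c\rangle_L = \sum_{j=1}^\ell [w_j]$, which is the assertion. I do not expect any genuine obstacle here; the only point needing a fixed convention is the meaning of a ``cyclic subword of length $L$'' when $L > |c|$, in which case it wraps $\lceil L/\ell\rceil$ times around $c$ --- but the defining limit~\eqref{homexplicit} is insensitive to this, so once the convention is in place the argument is a purely formal transfer of Lemma~\ref{HomogenizationMonoid} through linearity.
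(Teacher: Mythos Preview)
Your argument is correct and is exactly the approach the paper takes: the lemma is flagged there as an immediate consequence of Lemma~\ref{HomogenizationMonoid}, and what you have written simply spells out that consequence by evaluating both sides on the basis $\{\delta_g\}$ and invoking the identification of $\widehat{\rho_g}(c)$ with the cyclic count. The extra paragraph recalling the growth-rate computation behind Lemma~\ref{HomogenizationMonoid} is fine but not needed here, since that lemma is already available.
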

For example,
\[
\langle a_1a_2a_1^2a_2^2 \rangle_3 = [a_1a_2a_1] + [a_2a_1a_1] + [a_1a_1a_2] + [a_1a_2a_2] +[a_2a_2a_1] + [a_2a_1a_2].
\]
Now we can finish the proof by the following lemma.
\begin{lem} The set $C_L \subset (\R[M_n]_L)^*$ is linearly independent.
\end{lem}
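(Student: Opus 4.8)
The plan is to show that the $(n-1)n^{L-1}+1$ certificates $\langle ca_1^L\rangle_L$ for $c\in W_L$ are linearly independent by exhibiting, for each special word $c\in W_L$, a monomial $[w_c]$ in the dual basis which occurs in $\langle ca_1^L\rangle_L$ but in no other $\langle c'a_1^L\rangle_L$ with $c'\in W_L$. Once such a ``private'' dual basis vector $[w_c]$ is found for every $c$, a triangularity (in fact a direct ``private coordinate'') argument forces any vanishing linear combination $\sum_c \mu_c \langle ca_1^L\rangle_L = 0$ to have all $\mu_c=0$: evaluating the functional on $\delta_{w_c}$ picks out exactly $\mu_c$ (up to the known multiplicity, which one checks is $1$ for the chosen $w_c$).

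The concrete choice I would make uses Lemma~\ref{CertificateFormula}: $\langle ca_1^L\rangle_L$ is the sum of $[w]$ over all length-$L$ cyclic subwords of $ca_1^L$ (with multiplicity). For $c = a_iw$ with $i\neq 1$ and $|w| = L-1$, write $c = s_1\cdots s_L$ with $s_1 = a_i \neq a_1$. I would propose the private word $w_c := s_1\cdots s_L = c$ itself, read as a \emph{linear} subword of $ca_1^L = s_1\cdots s_L a_1\cdots a_1$ sitting in the initial block; I must check it is not accidentally produced, as a cyclic subword, by any other $c'a_1^L$. The point is that $c$ begins with $a_i$ (with $i\neq 1$) and the only way a cyclic subword of some $c'a_1^L = s_1'\cdots s_L' a_1^L$ of length $L$ can begin with a non-$a_1$ letter and then not be forced to wrap is to start at position $1$ and stay inside the first $L$ letters, i.e. to equal $s_1'\cdots s_L' = c'$; so $w_c = c$ appears in $\langle c'a_1^L\rangle_L$ only if $c' = c$ (after also ruling out cyclic occurrences that wrap through the trailing $a_1$'s — these all begin with $a_1$ unless they start in position $1$). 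For the exceptional special word $c = a_1^L$ the certificate $\langle a_1^L a_1^L\rangle_L = \langle a_1^{2L}\rangle_L$ contains $[a_1^L]$ with multiplicity $2L$, and $[a_1^L]$ cannot occur in $\langle ca_1^L\rangle_L$ for any other $c\in W_L$: such a $c$ starts with some $a_i$, $i\neq 1$, so $ca_1^L$ contains at most $L-1$ consecutive $a_1$'s (those at the end) plus possibly more inside $c$ but never a cyclic run of $L$ consecutive $a_1$'s unless $c$ itself is a power of $a_1$, which it is not. Hence $[a_1^L]$ is private to $c = a_1^L$.

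Assembling this, I would argue: suppose $\sum_{c\in W_L}\mu_c\langle ca_1^L\rangle_L = 0$ in $(\R[M_n]_L)^*$. Apply both sides to $\delta_{w_c}$ for a fixed $c\in W_L$. By the privacy property only the $c$-term survives, and $\langle ca_1^L\rangle_L(\delta_{w_c})$ equals the multiplicity of $w_c$ as a cyclic subword of $ca_1^L$, which is a nonzero integer (I will verify it is exactly $1$ for $c\neq a_1^L$ and $2L$ for $c=a_1^L$). Therefore $\mu_c = 0$. Since $c$ was arbitrary, the certificates are linearly independent, and $|C_L| = (n-1)n^{L-1}+1$, which is the bound we wanted; combined with Step~1 this yields $\dim B_L \geq n^{L-1}-1 \geq \dim K_L$ as in \eqref{MonoidBLvsKL}.

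The main obstacle I anticipate is the bookkeeping of \emph{cyclic} occurrences: one has to be careful that a cyclic subword of $ca_1^L$ that wraps around — starting near the end of the trailing $a_1$-block and continuing into the initial letters of $c$ — does not spuriously coincide with some $w_{c'}$. The resolution is exactly the observation already used implicitly above: the trailing block $a_1^L$ has length $L$, so any wrapping cyclic subword either lies entirely within a run of $a_1$'s (hence is $a_1^L$, handled separately) or begins with $a_1$; since every private word $w_c = c$ for $c\neq a_1^L$ begins with a letter $\neq a_1$, no wrapping occurrence can equal it. Making this dichotomy precise — essentially ``the buffer $a_1^L$ is long enough to isolate $c$'' — is the one genuinely combinatorial point; everything else is the standard private-coordinate argument for linear independence.
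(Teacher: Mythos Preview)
Your ``private coordinate'' approach does not work as stated; the privacy claim fails in both cases you consider.

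\textbf{Failure for $c=a_1^L$.} You assert that $[a_1^L]$ occurs only in $\langle a_1^{2L}\rangle_L$, reasoning that for $c'\in W_L\setminus\{a_1^L\}$ the word $c'a_1^L$ contains at most $L-1$ consecutive $a_1$'s. But the trailing block already has exactly $L$ consecutive $a_1$'s: the cyclic subword of $c'a_1^L$ of length $L$ starting at position $L+1$ is $a_1^L$. Hence $[a_1^L]$ appears (with coefficient at least $1$) in \emph{every} certificate $\langle c'a_1^L\rangle_L$, so it is not private.

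\textbf{Failure for $c\neq a_1^L$.} You claim that a length-$L$ cyclic subword of $c'a_1^L$ beginning with a non-$a_1$ letter must start at position $1$ and hence equal $c'$. This is false: nothing prevents $c'$ itself from containing non-$a_1$ letters at later positions. Concretely, take $n=2$, $L=3$, $c=a_2a_2a_1$ and $c'=a_2a_2a_2$, both in $W_3$. The cyclic subword of $c'a_1^3 = a_2a_2a_2a_1a_1a_1$ starting at position $2$ is $a_2a_2a_1 = c$. Thus $[c]$ appears in $\langle c'a_1^3\rangle_3$ and $c$ is not private to its own certificate.

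The paper repairs this not by privacy but by \emph{triangularity}: one introduces the right-lexicographic (``Hebrew'') order on $W_L$ with $a_1$ smallest, restricts all certificates to the test space $T_L=\linspan\{\delta_w\mid w\in W_L\}$, and shows that among the $[v]$ with $v\in W_L$ appearing in $\langle wa_1^L\rangle_L$, the maximal one is $v=w$. The point is that the cyclic subwords of $wa_1^L$ starting at positions $2,\dots,L$ are $s_2\cdots s_L a_1,\; s_3\cdots s_L a_1^2,\;\dots$, each strictly smaller than $w$ in the Hebrew order, while those starting inside the trailing $a_1$-block begin with $a_1$ and hence lie outside $W_L$ (except $a_1^L$ itself, the minimum of $W_L$). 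This yields a lower-triangular matrix with nonzero diagonal, which is exactly what you need. Your argument can be salvaged along these lines, but the stronger ``private'' assertion is simply false.
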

\begin{proof} Using again our set $W_L$ of special words we introduce a test space $T_L \subset \R[M_n]_L$ as
\[T_L := {\rm span}\{\delta_w \,|\, w \in W_L\}.\]
We will show that already the restrictions of the certificates in $C_L$ to $T_L$ are linearly independent.  For this we observe that if $w=a_is_1\cdots s_{L-1} \in W_L \setminus \{a_1^L\}$, then $i \neq 1$ and by Lemma~\ref{CertificateFormula}
\begin{eqnarray*}
\langle wa_1^L \rangle_L &=& [a_is_1 \cdots s_{L-1}] + [s_1 \cdots s_{L-1}a_1] + \cdots +  [s_{L-1}a_1^{L-1}] + [a_1^L]\\
&&+ [a_1^{L-1}a_i] + [a_1^{L-2}a_is_1] + \dots + [a_1a_is_1 \cdots s_{L-2}],
\end{eqnarray*}
Now, by definition, the words $a_1^{L-1}a_i$, $a_1^{L-2}a_is_1$, \dots, $a_1a_is_1 \cdots s_{L-2}$ appearing in the second row are not contained in $W_L$. It follows that
\begin{eqnarray*}
\langle wa_1^L \rangle_L|_{T_L} &=& ([a_is_1 \cdots s_{L-1}] + [s_1 \cdots s_{L-1}a_1] + \dots +  [s_{L-1}a_1^{L-1}] + [a_1^L])|_{T_L}.
\end{eqnarray*}
Concerning the final certificate we have
\[\langle a_1^{2L} \rangle_L=\langle a_1^{2L} \rangle_L|_{T_L} = 2L \cdot [a_1^L].\]
Now we introduce a total order on $M_2$ as follows: We first order $S $ by declaring that
\[
a_1 < a_2 < \dots < a_n
\]
and then extend to $M_n$ in a right-lexicographic (or Hebrew) way: Given $w, w' \in M_2$ and $s, s' \in S$ we set $ws < w's'$ if and only $s < s'$ or $s = s'$ and $w<w'$. Note in particular that $w_1 > w_2$ if and only if $w_1a_1^L > w_2a_1^L$. With this order understood the largest element in the support of $\langle wa_1^L \rangle_L|_{T_L} $ is precisely $w$; indeed this follows from
\[
 a_is_1 \cdots s_{L-1} \geq s_1 \cdots s_{L-1}a_1 \geq \dots \geq s_{L-1}a_1^{L-1} \geq a_1^L
\]
and the above computation of supports. We deduce that the matrix obtained by evaluating the certificates from $C_L$ on the basis $\{\delta_w \,|\, w \in W_L\}$ of $T_L$ is of lower triangular form for the given order with non-trivial diagonal entries. It therefore has full rank, and the lemma follows.
\end{proof}
This finishes \textsc{Step 2} and thereby the proof of Theorem~\ref{RelationsObvious}.(i).
\begin{rem}\label{RemarkB0} Note that as a by-product of the proof we also see that $B_L = B^0_L$.
\end{rem}
\subsection{Relations between counting functions on free groups}\label{SubsecRelGroups}
We are now going to extend the results of the previous subsection to the group case, thereby establishing Part (ii) of Theorem~\ref{RelationsObvious}. The proof is in close analogy with the monoid case and we will only highlight the necessary modifications. Throughout we fix $n \geq 2$ and write $K$, $B$, $K_L$ and $B_L$ as short hands for the spaces $K(F_n)$, $B(F_n)$, $K_L(F_n)$ and $B_L(F_n)$ as introduced in Section \ref{SecRelationsIntro}. By Lemma~\ref{BEqualsK} it suffices again to establish $\dim B_L \geq \dim K_L$.

Assume first that $L \leq 1$. We claim that in this case $\dim K_L = 0$, whence the desired inequality hold automatically. For $L = 0$ the claim follows from the fact that the function $\rho_e$ is unbounded. Now let $L = 1$. We have to show that the functions $\{\rho_{a_1}, \rho_{a_1^{-1}}, \dots, \rho_{a_n}, \rho_{a_n^{-1}}\}$ are linearly independent modulo bounded functions. For this it suffices to observe that
\[
\left(\sum_{i=1}^n \lambda_i \rho_{a_i} + \sum_{i=1}^n \mu_i \rho_{a_i^{-1}}
\right)(a_j^n) = \left\{\begin{array}{rl} n \cdot \lambda_j, & n>0\\ -n \cdot \mu_j, & n<0\end{array}\right..
\]
We have thus established the desired inequality for $L \leq 1$. Next we are going to show that
\begin{equation}\label{BoundBLKLGroup}
\dim B_L \geq  2n(2n-1)^{L-2}-1 \geq \dim K_L.
\end{equation}
for all $L \geq 2$.\\

\textsc{Step 1:} Concerning the lower bound on $\dim B_L$ we observe that, as in the case of monoids, every reduced word $w$ of length $L-1$ gives rise to a basic relation $b_w = r_{w}-l_{w}$ of length $L$, and there are $2n(2n-1)^{L-2}$ such words. Again we write each of the elements $b_w$ as
\[b_w = \sum_{|v| = L} \lambda_{w,v} \delta_v,\]
and obtain a matrix $A_{L}(F_n) = (\lambda_{w,v})$ of size $2n(2n-1)^{L-2} \times 2n(2n-1)^{L-1}$. We then have to show that the matrix $A_{L}(F_n)$ has rank $2n(2n-1)^{L-2}-1$. This amounts again to showing connectedness of a certain graph.

More precisely, let $\bar{S} := \{a_1, a_1^{-1}, \dots, a_n, a_n^{-1}\}$ be the symmetrization of the free generating set of $F_n$. Then the \emph{$L$th Martin -- de Bruijn  graph\footnote{These graphs were popularized through the PhD of Martin \cite{Martin}, who pointed out that they are Eulerian and that this can be used to show that integral measured currents on $F_n$ can be written as sums of counting currents.} over $\bar S$} is the graph $\Gamma_L(\bar S)$ with vertices given by reduced words of length $L-1$ over $\bar{S}$ and edges given by reduced words of length $L$ over $\bar{S}$, where the edge labelled by some word $w$ connects the two vertices labelled by the words which are obtained by cancelling the first, respectively last, letter of $w$. Then Lemma \ref{GraphReformulation} and its proof carry over to the present setting in the following form:
\begin{lem} \label{GraphReformulationGroup}
Let $\Gamma_L(F_n)$ be the loop-erased version of the Martin -- de Bruijn  graph $\Gamma_L(\bar S)$. Then ${\rm rank}(A_L(F_n))= n^{L-1}-1$ if and only if the graph $\Gamma_L(F_n)$ (or, equivalently, $\Gamma_L(\bar S)$) is connected.\qed
\end{lem}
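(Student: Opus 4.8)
The plan is to show that the proof of Lemma~\ref{GraphReformulation} transfers essentially verbatim, the one point requiring genuine attention being the combinatorial structure of the matrix $A_L(F_n)$ in the presence of the forbidden letters $w_1^{-1}$ and $w_{\rm fin}^{-1}$. First I would unwind the definitions: for a reduced word $w$ of length $L-1$ with initial letter $w_1$ and final letter $w_{\rm fin}$,
\[
b_w = r_w - l_w = \sum_{s \in \bar{S}\setminus\{w_1^{-1}\}} \delta_{sw} \;-\; \sum_{s \in \bar{S}\setminus\{w_{\rm fin}^{-1}\}} \delta_{ws},
\]
and the purpose of the two exclusions is precisely that $sw$, respectively $ws$, is then again a \emph{reduced} word, now of length $L$. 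Hence the $w$-th row of $A_L(F_n)$ has nonzero entries only in columns $v$ for which $w$ is a maximal proper reduced subword of $v$, i.e.\ $v$ arises from $w$ by prepending or appending a single admissible letter.

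Next I would establish the column structure, which is the analogue of the key observation in the monoid case (and the place where the ``potential cancellations'' warned about in the text must be handled). Fix a reduced word $v = v_1\cdots v_L$. A $+1$ in column $v$ can only come from the row $w := v_2\cdots v_L$ obtained by prefix deletion, and it does occur there, since $v_1 \neq v_2^{-1} = w_1^{-1}$ by reducedness of $v$; likewise a $-1$ can only come from the row $u := v_1\cdots v_{L-1}$ obtained by suffix deletion, and it occurs there since $v_L \neq v_{L-1}^{-1} = u_{\rm fin}^{-1}$. These two rows coincide exactly when $v_2\cdots v_L = v_1\cdots v_{L-1}$, which forces $v = s^L$ for a single letter $s \in \bar{S}$; in that case the one contributing row $s^{L-1}$ carries both a $+1$ and a $-1$ in column $v$ (the exclusion conditions $s \neq s^{-1}$ being automatic in a free group), so the column is identically zero. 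Thus every column of $A_L(F_n)$ contains either exactly one $+1$ and one $-1$, lying in two distinct rows, or no nonzero entry at all. In particular the sum of all rows is the zero vector, so ${\rm rank}(A_L(F_n)) \leq 2n(2n-1)^{L-2}-1$, one less than the number of rows of $A_L(F_n)$ (the exponent displayed in the statement should read $2n(2n-1)^{L-2}$, consistently with \eqref{BoundBLKLGroup}).

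With this structure in hand, the graph-theoretic reformulation runs exactly as in Lemma~\ref{GraphReformulation}. Vertices $w \neq w'$ of $\Gamma_L(F_n)$ are joined precisely when some column of $A_L(F_n)$ is nonzero in both the $w$-th and the $w'$-th rows; by the column analysis this is exactly the graph obtained from the Martin--de Bruijn graph $\Gamma_L(\bar{S})$ by deleting loops (which correspond to the vanishing columns $s^L$) and collapsing multiple edges. If $\Gamma_L(F_n)$ is connected, then in any vanishing linear combination of the rows the coefficient of a row $w$ that occurs must equal the coefficient of every neighbour of $w$ (in order to cancel the shared $\pm 1$ in the corresponding column), so all coefficients are equal and the all-ones combination is the only dependence among the rows; hence the rank equals $2n(2n-1)^{L-2}-1$. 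Conversely, summing the rows over the vertices of any connected component gives the zero row, so maximal rank forces a single component. Finally, deleting loops and multiple edges leaves the partition into connected components unchanged, so $\Gamma_L(F_n)$ is connected if and only if $\Gamma_L(\bar{S})$ is, which yields the ``equivalently'' clause of the statement.

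The only genuinely new bookkeeping, compared with the monoid case, is the verification in the second paragraph: one has to check that, despite the asymmetric exclusion of $w_1^{-1}$ from left extensions and of $w_{\rm fin}^{-1}$ from right extensions, reducedness of $v$ is exactly what guarantees that the prefix- and suffix-deletion rows genuinely carry the $+1$ and the $-1$ in column $v$, and that complete cancellation within a column happens only for the powers $v = s^L$. Once this is confirmed, the rest is a word-for-word transcription of the monoid argument. I expect this to be the only real obstacle; the substantive part of the subsection --- connectivity of $\Gamma_L(\bar{S})$ itself, where the naive de~Bruijn path argument breaks down because an arbitrary letter cannot be prepended to a reduced word --- lies outside this particular lemma and would be dealt with separately.
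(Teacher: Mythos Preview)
Your proposal is correct and follows exactly the approach the paper intends: the paper simply asserts that ``Lemma~\ref{GraphReformulation} and its proof carry over to the present setting'' and marks the lemma with \qed, so your explicit verification of the column structure of $A_L(F_n)$ (that reducedness of $v$ guarantees the prefix- and suffix-deletion rows genuinely carry the $\pm1$, with cancellation only for powers $s^L$) is precisely the bookkeeping the paper leaves implicit. You are also right that the exponent $n^{L-1}-1$ in the displayed statement is a slip for $2n(2n-1)^{L-2}-1$, as is clear from \eqref{BoundBLKLGroup} and the surrounding discussion.
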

Connectedness of the Martin -- de Bruijn  graph $\Gamma_L(\bar S)$ is again well-known and easy to see as follows: If $w =  s_1\cdots s_{L-1}$ and $w' = r_1 \cdots r_{L-1}$ with $r_{L-1} \neq s_1^{-1}$, then as in the monoid case,
\[
s_1\cdots s_{L-1} \sim r_{L-1}s_1\cdots s_{L-2} \sim r_{L-2}r_{L-1}s_1 \cdots s_{L-3} \sim \dots \sim r_1\cdots r_{L-1}.
\]
Thus in this case, $w$ and $w'$ are in the same connected component of the graph. If, however, $r_{L-1} = s_1^{-1}$, then we can choose $a_j$ with $a_j^{\pm 1} \neq r_{L-1}$, and by the previous case, both $w$ and $w'$ are in the same connected component as $a_j^{L-1}$.

This shows connectedness of the de Bruijn-Martin graph and thereby finishes \textsc{Step 1}.\\

\textsc{Step 2:} In analogy with the monoid case we have to construct $ \dim \R[F_n]_L-2n(2n-1)^{L-2}+1$ linearly independent certificates for $K_L$. By Lemma \ref{HomogenizationGroup} the counting functions $\rho_w \in {\mathcal C}(F_n)$ are homogenizable. We can thus define, as in the monoid case, for every $c \in F_n$ a certificate $\langle c\rangle_L$ for $K_L$ by the same formula as in \eqref{cCert}. If we assume in addition that $c$ is cyclically reduced, then we have the following analogue of Lemma~\ref{CertificateFormula}, which is again a direct consequence of Lemma \ref{HomogenizationGroup}.
\begin{lem}
Let $c \in F_n$. If $c$ is cyclically reduced, then
\[
\langle c \rangle_L = \sum [w],
\]
where $w$ runs through all cyclic subwords of $c$ of length $L$ with multiplicity.\qed
\end{lem}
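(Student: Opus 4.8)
The plan is to reduce the statement to the homogenization formula of Lemma~\ref{HomogenizationGroup}, following verbatim the monoid computation that gave Lemma~\ref{CertificateFormula}. First I would unwind the definition of the certificate in the dual basis: by the formula \eqref{cCert} defining $\langle c\rangle_L$ one has
\[
\langle c\rangle_L = \sum_{|w| = L}\widehat{\rho_w}(c)\,[w],
\]
so the assertion is equivalent to the single identity $\widehat{\rho_w}(c) = \#\{\,i : \text{the cyclic subword of } c \text{ of length } L \text{ starting at position } i \text{ equals } w\,\}$, for every reduced word $w$ of length $L$ and every cyclically reduced $c\in F_n$.

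Next I would evaluate $\widehat{\rho_w}(c) = \lim_{k\to\infty}\rho_w(c^k)/k$ using \eqref{homexplicit}. This is exactly where cyclic reducedness of $c$ is used: it guarantees that the reduced word representing $c^k$ is literally the $k$-fold concatenation of the reduced word for $c$, so there is no cancellation and the positions at which $w$ occurs inside $c^k$ can be enumerated directly. Writing $m = |c|_S$, a routine bookkeeping argument then shows that this number of positions equals $k$ times the number of cyclic occurrences of $w$ in $c$, up to an error of size $O(L)$ coming from the two ends of $c^k$; dividing by $k$ and letting $k\to\infty$ annihilates the error term and yields precisely the claimed cyclic count. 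Since this is exactly the content of Lemma~\ref{HomogenizationGroup}, in the write-up I would simply cite that lemma rather than reproduce the argument, as was done in the monoid case.

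Finally I would note that the formula is the exact analogue of Lemma~\ref{CertificateFormula}, the only difference being that in a free monoid every element is automatically ``cyclically reduced'', whereas over $F_n$ the hypothesis must be imposed (and can always be arranged by replacing $c$ by a cyclically reduced conjugate, which does not affect the homogenization). The one point deserving a sentence of care is the reading of ``cyclic subword of length $L$'' when $L > m$: it should be understood as a multiset indexed by the $m$ starting positions on the cyclic word $c$, the reading wrapping around $\lceil L/m\rceil$ times, and cyclic reducedness ensures each such reading is still a reduced word and hence genuinely contributes to some $\rho_w$. I do not anticipate any real obstacle here: the lemma is essentially a restatement of Lemma~\ref{HomogenizationGroup} in the language of certificates, and the only thing to get right is the boundary bookkeeping at the ends of $c^k$, which is elementary.
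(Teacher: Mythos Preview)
Your proposal is correct and is exactly the paper's approach: the paper gives no proof at all, simply stating that the lemma ``is again a direct consequence of Lemma~\ref{HomogenizationGroup}'' and appending a \qed, precisely as you suggest doing. Your additional remarks on the boundary bookkeeping and the case $L>|c|_S$ are more than the paper provides and are fine.
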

In order to establish the second inequality in \eqref{BoundBLKLGroup} along the same lines as in the monoid case we will thus have to find $ \dim \R[F_n]_L-2n(2n-1)^{L-2}+1$ \emph{cyclically reduced} words with linearly independent $L$-certificates. For this we choose our set of special words as follows: Denote by $\bar{S}^{(L)} \subset \bar{S}^L$ the set of all reduced words of length $L$, by $A^{(1)}_L$ the set of all such words starting with $a_1$ and by $A^{(2)}_L$ the set of all such words starting with $a_2 a_1^{-1}$. Then we define
\[
W_L := \bar{S}^{(L)}\setminus(A^{(1)}_L\cup A^{(2)}_L) \cup\{a_1^L\}.
\]
Note that
\begin{eqnarray*}
|W_L| &=& |\bar{S}^{(L)}|-|A^{(1)}_L| - |A^{(2)}_L| + 1\\ &=&  \dim \R[F_n]_L-(2n-1)^{L-1}-(2n-1)^{L-2}+1\\ &=&
 \dim \R[F_n]_L-2n(2n-1)^{L-2}+1.
\end{eqnarray*}
We are now going to define a certificate for every $w \in W_L$. Since we want to avoid cyclic cancellation, the definition of the set of certificates $C_L$ is more complicated than in the monoid case. Given $w \in W_L$ we define a reduced word $s(w) \in  \bar{S}^{(L)}$ as follows:
\begin{itemize}
\item[(i)] If $w$ does not start with $a_1^{-1}$ and does not end with $a_1^{-1}$, then $s(w):=a_1^L$.
\item[(ii)] If $w$ does not start with $a_1^{-1}$ but ends with $a_1^{-1}$, then $s(w) := a_2 a_1^L$.
\item[(iii)] If $w$ starts with $a_1^{-1}$ but does not end with $a_1^{-1}$, then $s(w) :=  a_1^L a_2$.
\item[(iv)] If $w$ starts and ends with $a_1^{-1}$, then $s(w) := a_2 a_1^L a_2$.
\end{itemize}
This definition is made in such a way that for every $w \in W_L$ the word $ws(w)$ is cyclically reduced, and we define
\[
C_L := \{\langle ws(w) \rangle_L \mid w \in W_L\}.
\]
It remains to show only that the set $C_L$ is linearly independent. We will in fact show that the certificates in $C_L$ are linearly independent when restricted to the test space $T_L := {\rm span}\{\delta_w\,|\,w \in W_L\}$. We order the words in $W_L$ as follows: first we order the letters according to
\[
 a_1 < a_2 < ... < a_n < a_1^{-1} < a_2^{-1} < \ldots < a_n^{-1}
\]
and extend this to a total ordering on $W_L$ in the right-lexicographic (or Hebrew) way. We now claim that the matrix $N_L$ formed by the evaluations of the certificates in $C_L$ on the canonical basis of the test space $T_L$
is a lower triangular matrix with respect to the given order on $W_L$ with non-zero diagonal entries. Indeed, this follows from the following two basic observations:
\begin{itemize}
\item[(i)] Consider the cyclic $L$-subwords of $ws(w)$ with initial letter in $w$. These form a strictly decreasing sequence, so $w$ is the biggest of them.
\item[(ii)] It remains to deal with those cyclic $L$-subwords $v$ of $ws(w)$ whose initial letter is contained in $s(w)$. Here there are several cases: If $w$ is among the words of Type (i) or (iii), then $v$ is not in $W_L$, so we can ignore it. In cases (ii) and (iv), $v$ is either not in $W_L$ or $v=a_2 a_1^{L-1}$. Since $w$ is not of type (i) we have $w \neq a_1^L$. But $v$ is the second smallest element of $W_L$ after $a_1^L$, so $w \geq v$ also in this case.
\end{itemize}
This shows that the matrix is indeed lower triangular with non-zero coefficients on the diagonal, which finishes \textsc{Step 2} and thereby the proof of Theorem~\ref{RelationsObvious}.(ii).
\subsection{Relations between counting quasimorphisms}\label{SubsecRelQM}
In this subsection we finish the proof of Theorem~\ref{RelationsObvious} by deducing Part (iii) of the theorem from Part (ii). Throughout we fix $n \geq 2$ and, using the notation introduced in Subsection \ref{SecRelationsIntro}, write $B$, $K$, $\widehat{\mathcal C}$, $K_{\rm sym}$ and $\widehat{\mathcal B}$ for
$B(F_n)$, $K(F_n)$, $\widehat{\mathcal C}(F_n)$, $K_{\rm sym}(F_n)$ and $\widehat{\mathcal B}(F_n)$ respectively. We also denote by $B_{\rm sym} \subset K_{\rm sym}$ the space spanned by he symmetry relations $\{s_w\mid w \in F_n\}$. We have already seen in Subsection \ref{SecBasicRelations} that $B+B_{\rm sym} \subset K_{\rm sym}$, and we would like to show the opposite inclusion $K_{\rm sym} \subset B+B_{\rm sym}$.

For this we first define a linear involution \[
\sigma: \R[F_n] \to \R[F_n], \quad f\mapsto f^*
\]
by demanding that $\delta_w^* := -\delta_{w^{-1}}$. Then the natural inclusion map $\iota:\widehat{\mathcal B} \to \widehat{\mathcal C}$, which sends the class $[\phi_w]$ to the class  $[\rho_w-\rho_{w^{-1}}]$ lifts to a map
\[
i: \R[F_n] \to \R[F_n], \quad f \mapsto f+f^*,
\]
i.e. we get a commutative diagram
\[\begin{xy}\xymatrix{
0\ar[r]&K_{\rm sym} \ar[r]& \R[F_n]\ar[r]^{q_0} \ar[d]^i & \widehat{\mathcal B} \ar[d]^\iota\ar[r]&0\\
0\ar[r]&B \ar[r] & \R[F_n]\ar[r]^q & \widehat{\mathcal C}\ar[r]&0.
}\end{xy}\]
Note that the top row is exact by definition, whereas the bottom row is exact by Theorem~\ref{RelationsObvious}.(ii). We deduce that $i$ restricts to a linear map $j: K_{\rm sym} \to B$.

Now in order to show the desired inclusion $K_{\rm sym} \subset B+B_{\rm sym}$ it suffices to show that $j(K_{\rm sym}) \subset j(B)+j(B_{\rm sym})$ and that $\ker(j) \subset \ker(j|_{B+B_{\rm sym}})$. We will actually establish the stronger inclusions $j(K_{\rm sym}) \subset j(B)$ and $\ker(j) \subset B_{\rm sym}$. The latter inclusion is actually immediate from the description of $\ker(j)$ as
\[
\ker(j) = \{f \in K_{\rm sym} \mid j(f) = f+f^* = 0\}.
\]
It thus remains to show only that $j(K_{\rm sym})\subset j(B)$. We claim that
\begin{equation}\label{jB}
j(B) = \{b \in B\mid b^* =b\}.
\end{equation}
The inclusion $\subseteq$ follows from $(f+f^*)^* = f+f^*$. Conversely, if $b^* = b \in B\subset K_{\rm sym}$, then $j(b/2) = b/2 + b^*/2 = b$, which shows the opposite inclusion and proves \eqref{jB}.

Now if $f \in K_{\rm sym}$, then $j(f) \in B$ and $j(f)^* = (f+f^*)^* = f+f^*=j(f)$. We then deduce from \eqref{jB} that $j(f) \in j(B)$. This proves the remaining inclusion $j(K_{\rm sym}) \subset j(B)$.

We have thus shown that Part (ii) of Theorem~\ref{RelationsObvious} implies Part (iii). This concludes the proof of Theorem~\ref{RelationsObvious}.

\section{Bases for $\widehat{\mathcal C}(M_n)$, $\widehat{\mathcal C}(F_n)$ and $\widehat{\mathcal B}(F_n)$}\label{SecBasis}
\subsection{Pure bases and compatible bases}
The purpose of this section is to construct bases for each for the spaces $\widehat{\mathcal C}(M_n)$, $\widehat{\mathcal C}(F_n)$ and $\widehat{\mathcal B}(F_n)$ and thereby to establish Theorem~\ref{ThmBasis}. Given $L \geq 0$ we will denote by $\widehat{\mathcal C}(M_n)_L$ the image of ${\R}[M_n]_L$ in $\widehat{\mathcal C}(M_n)$. The spaces  $\widehat{\mathcal C}(F_n)_L$ and $\widehat{\mathcal B}(F_n)_L$ will be defined  similarly. We are going to relate bases of the spaces $\widehat{\mathcal C}(M_n)_L$ to bases of $\widehat{\mathcal C}(M_n)$, and similarly for $\widehat{\mathcal C}(F_n)$.

Concerning bases of $\widehat{\mathcal C}(M_n)_L$ we adapt the following language: A basis of $\widehat{\mathcal C}(M_n)_L$ is called \emph{pure} if its elements are of the form $[\rho_w]$ for some $w \in S^L$. Note that there are only finitely many pure bases for a given $L$. If $B_L$ is a basis for $\widehat{\mathcal C}(M_n)_L$, then the sequence of bases $(B_L)_{L \geq 0}$ is called \emph{compatible} if $B_L \cap \widehat{\mathcal C}(M_n)_{L-1} = B_{L-1}$.  Pure bases of $\widehat{\mathcal C}(F_n)$ and compatible sequences of bases of  $\widehat{\mathcal C}(F_n)$ are defined similarly. Note that a sequence of pure bases can never be compatible.

It follows from the left- and right-extension relations that every $[\rho_w]$ with $|w|=L$ can be written as a linear combination of $[\rho_v]$ for some $v$ of length $L+1$. This implies that
\[
\widehat{\mathcal C}(M_n)_0 \subset \widehat{\mathcal C}(M_n)_1 \subset \dots \subset \widehat{\mathcal C}(M_n)_L \subset \dots,
\]
whence $\widehat{\mathcal C}(M_n)$ is the ascending union
\[
\widehat{\mathcal C}(M_n) = \bigcup_{L \geq 0} \widehat{\mathcal C}(M_n)_L.
\]
Consequently, if $(B_L)_{L \geq 0}$ is a compatible sequence of bases of $\widehat{\mathcal C}(M_n)_L$, then $B:=\bigcup B_L$ defines a basis of $\widehat{\mathcal C}(M_n)$. Similarly, compatible sequences of bases for $\widehat{\mathcal C}(F_n)_L$ give rise to bases of $\widehat{\mathcal C}(F_n)$.

It turns out that pure bases of $\widehat{\mathcal C}(M_n)_L$ can be classified in graph theoretic terms. Given pure bases for each $\widehat{\mathcal C}(M_n)_L$ one can then easily modify them to obtain a compatible sequence of bases, and thereby a basis for  $\widehat{\mathcal C}(M_n)$. This will  be carried out in Subsection~\ref{BasisMonoid}, leading to a proof of Part (i) of Theorem \ref{ThmBasis}. The analogous constructions in the group case and in particular the proof of Part (ii) of Theorem \ref{ThmBasis} will be given in Subsection~\ref{BasisGroup}. Deducing Part (iii) of Theorem \ref{ThmBasis} from Part (ii) is essentially a triviality, since our basis for $\widehat{\mathcal C}(F_n)$ can easily be modified as to possess the necessary symmetries. We will give the details in Subsection~\ref{SecBasisQM}.

\subsection{A basis for $\widehat{\mathcal C}(M_n)$}\label{BasisMonoid}
Our first task is to classify pure bases for $\widehat{\mathcal C}(M_n)_L$. The only pure basis for $\widehat{\mathcal C}(M_n)_0$ is given by $B_0 = \{\rho_e\}$. We will now parametrize pure bases for $\widehat{\mathcal C}(M_n)_L$ for $L \geq 1$. Recall from Subsection~\ref{SubsecRelMonoids} that the $L$-th de Bruijn graph $\Gamma_L(S)$ over $S = \{a_1, \dots, a_n\}$ has vertices labelled by $S^{L-1}$ and edges labelled by $S^L$ where the edge labelled $w$ connects the two vertices by the subwords obtained from $w$ by deleting the first respectively last letter. The graph $\Gamma_L(M_n)$ is obtained from this graph by deleting loops and collapsing multiple edges to single edges. We observe:
\begin{prop}\label{BasisMonoidTrees} Let $L \geq 1$ and let $W$ be a set of words of length $L$ in $M_n$ of cardinality $|W| = (n-1)n^{L-1}+1$. Then the following are equivalent.
\begin{itemize}
\item[(i)] The set $B(W) := \{\rho_w\mid  w \in W\}$ is a pure basis of $\widehat{\mathcal C}(M_n)_L$.
\item[(ii)] The subgraph of $\Gamma_L(S)$ (or equivalently of $\Gamma_L(M_n)$) with vertices $S^{L-1}$ and edges labelled by $S^L\setminus W$ is connected.
\item[(iii)] The subgraph of $\Gamma_L(S)$ with vertices $S^{L-1}$ and edges labelled by $S^L\setminus W$ is a spanning tree of $\Gamma_L(S)$.
\end{itemize}
In particular, $\dim \widehat{\mathcal C}(M_n)_L = (n-1)n^{L-1}+1$ and pure bases of $\widehat{\mathcal C}(M_n)_L$ are in bijection with spanning trees of the de Bruijn graph $\Gamma_L(S)$.
\end{prop}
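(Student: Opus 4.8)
The plan is to organize the argument around the matrix $A_L(M_n)$ from Subsection~\ref{SubsecRelMonoids} and the dimension count already established there, namely $\dim \widehat{\mathcal C}(M_n)_L = \dim \R[M_n]_L - \dim K_L = n^L - (n^{L-1}-1) = (n-1)n^{L-1}+1$. With the cardinality hypothesis $|W| = (n-1)n^{L-1}+1$, the set $B(W)$ is a pure basis of $\widehat{\mathcal C}(M_n)_L$ if and only if the classes $\{[\rho_w] : w \in W\}$ are linearly independent, equivalently if and only if $\R[M_n]_L = K_L \oplus \linspan\{\delta_w : w \in W\}$, equivalently if and only if $K_L \cap \linspan\{\delta_w : w \in W\} = 0$ (the dimensions being complementary). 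So (i) is equivalent to: no nonzero relation in $K_L$ is supported entirely on $W$.

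First I would translate this into the matrix picture. By Step~1 of Subsection~\ref{SubsecRelMonoids} (and Remark~\ref{RemarkB0}, which gives $B_L = B_L^0$, together with the equality $B_L = K_L$ from Theorem~\ref{RelationsObvious}.(i)), the space $K_L$ is exactly the row space of $A_L(M_n)$, whose rows are the $b_w = r_w - l_w$ for $w \in S^{L-1}$. The kernel of $A_L(M_n)$ (acting on the right, i.e. on functionals, equivalently the space of $\phi \in \R[M_n]_L^*$ killing every $b_w$) has dimension $n^L - (n^{L-1}-1) = \dim \widehat{\mathcal C}(M_n)_L$. Now a standard fact about edge-vertex incidence: a column of $A_L(M_n)$ indexed by $v \in S^L$ records, with signs, the two vertices $w = \pr_{\hat 1}(v)$ and $w' = \pr_{\hat L}(v)$ obtained by deleting the first, resp. last, letter of $v$ — i.e. the endpoints of the edge $v$ of the (loop-erased) de Bruijn graph. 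Thus $A_L(M_n)$ is, up to the loops (columns of $v$ that are powers of a single letter, which are identically zero and may be discarded, matching $S^L \setminus W$ only through $W$), the signed incidence matrix of the graph $\Gamma_L(M_n)$, whose vertex set is $S^{L-1}$ and edge set is $S^L$ minus the loops.

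Next I would invoke the classical graph-theoretic dictionary for incidence matrices of connected graphs: a set of columns (edges) of the incidence matrix of a connected graph has full rank equal to (number of vertices) $-1$ if and only if those edges contain a spanning tree; dually, the complementary set of edges supports a cycle (hence a dependency among the rows restricted appropriately) if and only if it contains a circuit. Concretely, since $\Gamma_L(S)$ is connected with $n^{L-1}$ vertices (shown in Subsection~\ref{SubsecRelMonoids}), the deleted edge set $S^L \setminus W$ spans all vertices and, having $|S^L \setminus W| = n^L - (n-1)n^{L-1} - 1 = n^{L-1}-1$ edges, is connected if and only if it is a spanning tree — this gives (ii) $\Leftrightarrow$ (iii) immediately. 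For (i) $\Leftrightarrow$ (ii): a nonzero relation in $K_L$ supported on $W$ is precisely a nonzero linear combination of rows of $A_L$ whose support lies in the columns $W$, i.e. which vanishes on all columns outside $W$; such a combination exists if and only if the edge set $S^L \setminus W$ fails to be connected (a combination of rows $\sum_w c_w (\text{row}_w)$ vanishes on an edge $v=ww'$ iff $c_w = c_{w'}$, so vanishing on a connected subgraph forces the coefficients to be constant on its components, and a genuinely supported-on-$W$ relation exists iff $S^L\setminus W$ has at least two components). This is the same argument as in the proof of Lemma~\ref{GraphReformulation}, applied to the subgraph. The loop columns require a one-line check that they are handled correctly: a loop $v=a_j^L$ is never an edge of $\Gamma_L(M_n)$, so it belongs to $S^L \setminus W$ only vacuously and does not affect connectivity, while as a column of $A_L$ it is zero and hence imposes no constraint — consistent on both sides.

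The main obstacle, such as it is, is bookkeeping rather than mathematics: one must be careful that the "loop" columns of $A_L(M_n)$ (those $v$ that are powers of a single generator) are zero and therefore play no role, so that the relevant graph is genuinely the loop-erased de Bruijn graph $\Gamma_L(M_n)$ and not $\Gamma_L(S)$ with multiplicities — but since collapsing multiple edges and erasing loops changes neither connectivity of an edge-subset nor the rank of the incidence matrix restricted to that subset, the equivalences are insensitive to this, and (ii) may be stated for either graph. Finally, the displayed consequences follow: $\dim \widehat{\mathcal C}(M_n)_L = (n-1)n^{L-1}+1$ was already noted, and pure bases correspond under $W \mapsto S^L \setminus W$ to spanning trees of $\Gamma_L(S)$ by (i) $\Leftrightarrow$ (iii).
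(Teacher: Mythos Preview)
Your proposal is correct and follows essentially the same route as the paper: both reduce (i) to a rank condition on the submatrix of $A_L(M_n)$ with columns indexed by $S^L\setminus W$, using $K_L=B_L=B_L^0$ (Remark~\ref{RemarkB0} plus Theorem~\ref{RelationsObvious}(i)), and then invoke the incidence-matrix argument of Lemma~\ref{GraphReformulation} to translate full rank into connectivity of the subgraph on $S^L\setminus W$; the equivalence (ii)$\Leftrightarrow$(iii) is then the standard vertex/edge count. Your phrasing in terms of ``no nonzero element of $K_L$ supported on $W$'' is just the dual statement to the paper's ``$A_W$ has full rank $n^{L-1}-1$'', and your handling of the zero loop-columns matches the paper's implicit treatment.
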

\begin{proof} For $L = 1$ the only pure basis of $\widehat{\mathcal C}(M_n)_L$ is $B(W)$ with $W = S$. This is in accordance with (ii) and (iii), since $\Gamma_1(S)$ has a single vertex. We may thus assume from now on that $L \geq 2$. By Remark \ref{RemarkB0} any relations between the $\rho_w$ are consequences of the basic relations $b_w$.  This can be expressed in terms of the matrix $A_L(M_n)$ given by \eqref{ALMn} as follows: Let us enumerate the words of length $L$ by $\{w_1, \dots, w_{n^L}\}$  and the words of length $L-1$ by $\{v_1, \dots, v_{n^{L-1}}\}$.
Then
\begin{eqnarray*}&&\sum_{i=1}^{n^L}\alpha_i[\rho_{w_i}] = 0\\
&\Leftrightarrow& \exists \lambda_j \; \forall i = 1, \dots, n^L:\; \alpha_i = \sum_{j=1}^{n^{L-1}}\lambda_ja_{ji},
\end{eqnarray*}
where $a_{ij}$ is the entry of $A_{L}(M_n)$ corresponding to the row $v_j$ and the column $w_j$. It follows that the set $\{\rho_w\mid w \in W\}$ is linearly independent if and only if the $n^{L-1} \times (n^{L-1}-1)$ submatrix $A_{W}$ of $A_L(M_n)$ formed by the columns corresponding to words in $S^L\setminus W$ has full rank $n^{L-1}-1$. The matrix $A_W$ has the same structure as $A$, i.e. every column contains at most two non-zero entries which are contained in in ${\pm 1}$ and sum up to $0$. Thus we can argue as in Step 1 of Subsection~\ref{SubsecRelMonoids} to conclude that $A_W$ has full rank if and only if the subgraph of $\Gamma_L(M_n) \subset \Gamma_L(S)$ with edges in $S^L\setminus W$ is connected. This shows the equivalence (i)$\Leftrightarrow$(ii) and also implies that $\dim \widehat{\mathcal C}(M_n)_L = (n-1)n^{L-1}+1$. Then the equivalence (ii)$\Leftrightarrow$(iii) is an immediate consequence of the fact that a graph with $k$ vertices and $k-1$ edges is connected if and only if it is a tree.
\end{proof}
For example let $S = \{a_1, a_2\}$ and consider the subset \[W := \{a_1^3, a_2a_1^2, a_2a_1a_2, a_2^2a_1a_2^2a_1, a_2^3\} \subset S^3.\] Then $S^3\setminus W = \{a_1^2a_2, a_1a_2a_1, a_1a_2^2\}$ corresponds to the following spanning tree of $\Gamma_3(S)$:
\begin{center}
\begin{tikzpicture}
  [scale=.8,auto=left,every node/.style={circle,fill=blue!20}]
  \node(0) at (3,1) {$a_2a_2$};
  \node (1) at (3,3) {$a_1a_2$};
  \node (2) at (1,1)  {$a_2a_1$};
  \node (3) at (1,3)  {$a_1a_1$};
  \foreach \from/\to in {0/1, 1/2, 1/3}
    \draw (\from) -- (\to);
\end{tikzpicture}
\end{center}
It follows that $B(W)$ is a pure basis of $\widehat{\mathcal C}(M_2)_3$. This example generalizes as follows:
\begin{cor}\label{PureBasisMonoid} Given $L \geq 0$ define $W(a_1; L)$ as follows: If $L = 0$, then $W(a_1; L) := \{e\}$. If $L > 0$, then
\[
W(a_1; L) := (S^L\setminus \{w \in S^L\mid w \in a_1S^{L-1}\})\cup\{a_1^L\}.
\]
Then $B(W(a_1; L)) :=\{\rho_w\mid w \in W(a_1; L)\}$ is a pure basis of $\widehat{\mathcal C}(M_n)_L$.
\end{cor}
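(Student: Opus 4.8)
The plan is to deduce the corollary directly from Proposition~\ref{BasisMonoidTrees}. The cases $L=0$ and $L=1$ are immediate: $W(a_1;0)=\{e\}$ gives the unique pure basis $\{\rho_e\}$ of $\widehat{\mathcal C}(M_n)_0$, and unwinding the definition one finds $W(a_1;1)=S$, which gives the unique pure basis $\{\rho_{a_1},\dots,\rho_{a_n}\}$ of $\widehat{\mathcal C}(M_n)_1$ identified in the proof of Proposition~\ref{BasisMonoidTrees}. So from now on I would assume $L\geq 2$.

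First I would check the cardinality hypothesis of Proposition~\ref{BasisMonoidTrees}. The set $\{w\in S^L\mid w\in a_1S^{L-1}\}$ has $n^{L-1}$ elements, so $S^L\setminus a_1S^{L-1}$ has $(n-1)n^{L-1}$ elements; since $a_1^L\in a_1S^{L-1}$, adjoining it yields $|W(a_1;L)|=(n-1)n^{L-1}+1$, as required. By the equivalence (i)$\Leftrightarrow$(ii) of Proposition~\ref{BasisMonoidTrees} it therefore suffices to prove that the subgraph $\Delta$ of $\Gamma_L(S)$ (equivalently of $\Gamma_L(M_n)$) with vertex set $S^{L-1}$ and edge set $S^L\setminus W(a_1;L)=\{a_1u\mid u\in S^{L-1}\setminus\{a_1^{L-1}\}\}$ is connected. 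I would remark here that none of these edges is a loop, since deleting the first and last letters of $a_1u$ produces the same word only when $a_1u=a_1^L$, which is excluded; hence it is immaterial whether one argues in $\Gamma_L(S)$ or $\Gamma_L(M_n)$.

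To prove connectedness I would show that every vertex $v=t_1\cdots t_{L-1}$ is joined in $\Delta$ to the vertex $a_1^{L-1}$. Recall that in $\Gamma_L(S)$ the edge labelled $a_1s_1\cdots s_{L-1}$ joins $s_1\cdots s_{L-1}$ (delete the first letter) to $a_1s_1\cdots s_{L-2}$ (delete the last letter). Set $v^{(k)}:=a_1^k t_1\cdots t_{L-1-k}$ for $0\leq k\leq L-1$, so that $v^{(0)}=v$ and $v^{(L-1)}=a_1^{L-1}$. For each $k$ with $v^{(k)}\neq a_1^{L-1}$ the label $a_1v^{(k)}$ is distinct from $a_1^L$, hence is a genuine edge of $\Delta$, and it joins $v^{(k)}$ to $v^{(k+1)}$. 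Following these edges from $v$ until one first reaches $a_1^{L-1}$ (which happens at the latest when $k=L-1$, and once $v^{(k)}=a_1^{L-1}$ every later $v^{(j)}$ equals $a_1^{L-1}$ as well) produces the desired path. Thus $\Delta$ is connected, and the implication (ii)$\Rightarrow$(i) of Proposition~\ref{BasisMonoidTrees} shows that $B(W(a_1;L))$ is a pure basis of $\widehat{\mathcal C}(M_n)_L$.

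The only point that requires a moment's care is the verification in the last paragraph that every edge $a_1v^{(k)}$ used along the path really belongs to $\Delta$, i.e. is not the forbidden word $a_1^L$; but this can fail only when $v^{(k)}=a_1^{L-1}$, which is precisely the situation in which the path has already terminated. So I do not expect any genuine obstacle here — the content of the proof is simply the observation that ``shifting $a_1$'s in from the left'' drives any word of length $L-1$ to $a_1^{L-1}$ through edges that all lie outside $W(a_1;L)$.
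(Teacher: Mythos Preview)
Your proof is correct and follows essentially the same approach as the paper: both apply the connectivity criterion of Proposition~\ref{BasisMonoidTrees} and verify it by showing that any vertex $v\in S^{L-1}$ can be joined to $a_1^{L-1}$ by successively prepending $a_1$ and dropping the last letter, using only edges of the form $a_1u\neq a_1^L$. You are simply more explicit than the paper about the small cases $L\leq 1$, the cardinality count, and the verification that the edges used are admissible.
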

\begin{proof} Let $w \in S^{L-1}$. We show that $w$ can be connected in $\Gamma_L(S)$ to $a_1^{L-1}$ by a path using only edges from $S^L \setminus W(a_1;L)$. For this we write $w = a_1^ks_1\dots s_{L-k-1}$ with $s_1 \neq a_1$. Then
\[
w =a_1^ks_1\dots s_{L-k-1} \sim a_1^{k+1}s_1\dots s_{L-k-2} \sim \dots \sim a_1^{L-2}s_1 \sim a_1^{L-1}
\]
is an admissible path.
\end{proof}
We can now modify these pure bases to obtain a family of compatible bases as follows:
\begin{cor}\label{CompatibleBasisMonoid} Let $j \in \{1, \dots, n\}$ and let $W_L$ denote the set of (possibly empty) words of length $\leq L$ not starting or ending in $a_j$. Then $B(W_L) := \{\rho_w\mid w\in W_L\}$ is a basis of $\widehat{\mathcal C}(W_n)_L$, and these bases are compatible.
\end{cor}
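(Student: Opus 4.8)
The first move is to reduce to $j=1$: the transposition of $a_1$ and $a_j$ extends to an automorphism of $M_n$ that carries each $\rho_w$ to $\rho_{w'}$ for the correspondingly relabelled word $w'$, and it identifies the two asserted bases, so it suffices to treat $j=1$. The plan is then to prove, by induction on $L$, that $B(W_L)$ is a basis of $\widehat{\mathcal C}(M_n)_L$; compatibility will follow essentially formally. For $L=0$ the space $\widehat{\mathcal C}(M_n)_0$ is the line spanned by the non-zero class $[\rho_e]$ and $W_0=\{e\}$, so there is nothing to prove. So assume $L\geq 1$ and that $B(W_{L-1})$ is a basis of $\widehat{\mathcal C}(M_n)_{L-1}$.

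First I would record two bookkeeping facts. By Proposition~\ref{BasisMonoidTrees}, $\dim\widehat{\mathcal C}(M_n)_L=(n-1)n^{L-1}+1$, while a short geometric-series count yields the same value for $|W_L|=1+(n-1)+\sum_{\ell=2}^{L}(n-1)^2 n^{\ell-2}$. Consequently, it will suffice to show that $B(W_L)$ \emph{spans} $\widehat{\mathcal C}(M_n)_L$: a family indexed by $W_L$ spanning a space of dimension $|W_L|$ is automatically a basis, and in particular its members are then automatically distinct. Moreover $W_{L-1}\subset W_L$, so $\widehat{\mathcal C}(M_n)_{L-1}=\linspan B(W_{L-1})\subset\linspan B(W_L)$. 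Since $\widehat{\mathcal C}(M_n)_L=\linspan\{[\rho_v]\mid |v|=L\}$ by definition, the task reduces to proving that $[\rho_v]\in\linspan B(W_L)$ for every word $v$ of length $L$.

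This I would establish by a short three-case reduction using the extension relations of Subsection~\ref{SecBasicRelations} (recall $\mathfrak q(l_w)=\mathfrak q(r_w)=0$, i.e. $[\rho_w]=\sum_{s\in S}[\rho_{sw}]$ and $[\rho_w]=\sum_{s\in S}[\rho_{ws}]$, with the conventions for $w=e$ recorded after Theorem~\ref{RelationsObvious}). Fix $v$ with $|v|=L$. If $v$ neither starts nor ends with $a_1$, then $v\in W_L$ and $[\rho_v]\in B(W_L)$ outright. If $v$ does not start with $a_1$ but ends with $a_1$, write $v=v''a_1$ with $|v''|=L-1$; then $v''$ does not start with $a_1$, and the right-extension relation at $v''$ gives $[\rho_v]=[\rho_{v''}]-\sum_{s\in S\setminus\{a_1\}}[\rho_{v''s}]$, where $[\rho_{v''}]\in\widehat{\mathcal C}(M_n)_{L-1}\subset\linspan B(W_L)$ and each $v''s$ with $s\neq a_1$ neither starts nor ends with $a_1$, so $[\rho_{v''s}]\in B(W_L)$. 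Finally, if $v$ starts with $a_1$, write $v=a_1v'$ with $|v'|=L-1$; the left-extension relation at $v'$ gives $[\rho_v]=[\rho_{v'}]-\sum_{s\in S\setminus\{a_1\}}[\rho_{sv'}]$, where $[\rho_{v'}]\in\widehat{\mathcal C}(M_n)_{L-1}\subset\linspan B(W_L)$ and each $sv'$ with $s\neq a_1$ does not start with $a_1$, hence falls under one of the first two cases. Thus every $[\rho_v]$ with $|v|=L$ lies in $\linspan B(W_L)$, so $B(W_L)$ is a basis of $\widehat{\mathcal C}(M_n)_L$.

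It remains to check compatibility, i.e. $B(W_L)\cap\widehat{\mathcal C}(M_n)_{L-1}=B(W_{L-1})$. The inclusion $\supseteq$ is immediate since $W_{L-1}\subset W_L$ and every $w\in W_{L-1}$ has $|w|\leq L-1$. Conversely, suppose some $[\rho_w]\in B(W_L)$ lies in $\widehat{\mathcal C}(M_n)_{L-1}$ with $w\notin W_{L-1}$; as $w\in W_L$ avoids $a_1$ at both ends, this forces $|w|=L$, and expanding $[\rho_w]$ in the basis $B(W_{L-1})$ of $\widehat{\mathcal C}(M_n)_{L-1}$ produces a non-trivial linear relation among pairwise distinct members of the basis $B(W_L)$ of $\widehat{\mathcal C}(M_n)_L$ --- a contradiction --- so $w\in W_{L-1}$. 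The only step carrying genuine content is the three-case spanning reduction; the one point to watch there is that the cases do not loop back on one another, which they do not, since case~3 only invokes cases~1 and~2, and cases~1 and~2 only invoke $B(W_L)$ itself and $\widehat{\mathcal C}(M_n)_{L-1}$, the latter already handled by the inductive hypothesis.
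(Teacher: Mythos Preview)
Your proof is correct and follows essentially the same strategy as the paper: match $|W_L|$ with $\dim\widehat{\mathcal C}(M_n)_L$ and then show spanning via the extension relations. The organisation differs slightly. The paper goes through the already-established pure basis $B(W(a_1;L))$ of Corollary~\ref{PureBasisMonoid} and shows that each of its members lies in the span of $B(W_L)$, using only right-extension relations together with an inner induction on the number of trailing $a_1$'s. You instead run an outer induction on $L$ and show directly that every $[\rho_v]$ with $|v|=L$ lies in the span, using both left- and right-extension relations in your three-case split; this makes the argument a touch more self-contained (you do not need to invoke Corollary~\ref{PureBasisMonoid}, only the dimension formula from Proposition~\ref{BasisMonoidTrees}) and also lets you give an explicit compatibility argument, which the paper leaves implicit. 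Both routes are short and rest on the same ingredients.
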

\begin{proof} By symmetry we may assume without loss of generality that $j=1$. Observe first that $|W_L| = \dim \widehat{\mathcal C}(M_n)_L$. For $L = 0$ this is obvious, and for $L \geq 1$ it follows from the formula $\dim \widehat{\mathcal C}(M_n)_L = (n-1)n^{L-1}+1$. In view of this observation it suffices to show that each element of the basis $B(W(a_1;L))$ can be expressed as a linear combination of elements in $B:=B(W_L)$. The case $L = 0$ is again obvious, so we may assume $L \geq 1$. In this case the set $W(a_1;L)$ can be written as the disjoint union
\[
W(a_1;L) = W_1 \cup W_2 \cup \{a_1^L\},
\]
where $W_1$ is the the set of words of length $L$ not starting and ending in $a_1$ and $W_2$ is the set of words of length $L$ ending, but not starting in $a_1$. If $w \in W_1$, then $\rho_w \in B$. If $w \in W_2$ then $w = va_1^k$, where $v$ does not end in $a_1$. If $k=1$, then
\[
\rho_w = \rho_v - \sum_{s \not \in \{w_{\rm fin}^{-1}, a_1\}} \rho_{vs}
\]
is contained in the span of $B$. For $k \geq 1$ we obtain $w \in B$ by induction on $k$, applying again the righ-extension relations. Finally, using again the right-extension relation and induction on $k$ one shows that $\rho_{a_1^k}$ is contained in the span of $B$ for all $k \leq L$. This shows that every element of $B({W(a_1;L)})$ is contained in the span of $B$ and finishes the proof.
\end{proof}
We deduce that the ascending union $\bigcup B(W_L)$ is a basis for $\widehat{\mathcal C}(M_n)$. This finishes the proof of Theorem \ref{ThmBasis}.(i).

\subsection{A basis for $\widehat{\mathcal C}(F_n)$}\label{BasisGroup}
We now modify the argument of the last subsection so that it works also for free groups instead of free monoids. Let $F_n$ be the free group with basis $S=\{a_1, \dots, a_n\}$ and let $\bar{S} = \{a_1, \dots, a_n, a_1^{-1}, \dots, a_n^{-1}\}$. We write $\bar{S}^{(L)} \subset \bar{S}^L$ for the subset of \emph{reduced} words of length $L$. With this notation, the Martin -- de Bruijn graph $\Gamma_L(\bar S)$ has vertex set $\bar{S}^{(L-1)}$ and edge set $\bar{S}^{(L)}$, where each edges connects the two vertices obtained by deleting the first respectively last letter. In complete analogy with Proposition~\ref{BasisMonoidTrees} one proves:
\begin{prop}\label{BasisGroupsTrees} Let $W$ be a set of reduced words of length $L \geq 2$ in $F_n$ of cardinality $|W| = 2n(2n-1)^{L-2}(2n-2) + 1$. Then the following are equivalent.
\begin{itemize}
\item[(i)] The set $B(W) := \{\rho_w\mid  w \in W\}$ is a pure basis of $\widehat{\mathcal C}(F_n)_L$.
\item[(ii)] The subgraph of $\Gamma_L(\bar S)$ with vertices $\bar{S}^{(L-1)}$ and edges labelled by $\bar{S}^{(L)}\setminus W$ is connected.
\item[(iii)] The subgraph of $\Gamma_L(\bar S)$ with vertices $\bar{S}^{(L-1)}$ and edges labelled by $\bar{S}^{(L)}\setminus W$ is a spanning tree of $\Gamma_L(\bar S)$.
\end{itemize}
In particular, $\dim \widehat{\mathcal C}(F_n)_L = 2n(2n-1)^{L-2}(2n-2) + 1$ and pure bases of $\widehat{\mathcal C}(F_n)_L$ are in bijection with spanning trees of the Martin -- de Bruijn graph $\Gamma_L(S)$.\qed
\end{prop}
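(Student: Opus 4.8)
The plan is to mimic the proof of Proposition~\ref{BasisMonoidTrees} almost line by line, so I will only indicate where the free-group features enter. Fix $L \geq 2$ throughout. The first step is to record the group analogue of Remark~\ref{RemarkB0}: in Step~1 of Subsection~\ref{SubsecRelGroups} it was shown that the matrix $A_L(F_n)$ built from the basic relations $b_w := r_w - l_w$, with $w$ ranging over reduced words of length $L-1$, has rank $2n(2n-1)^{L-2}-1$, hence the span of the $b_w$ has this dimension; on the other hand Step~2 gave $\dim K_L(F_n) \leq 2n(2n-1)^{L-2}-1$, and $\mathrm{span}\{b_w\} \subseteq B_L(F_n) \subseteq K_L(F_n)$. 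Thus all three coincide, and every linear relation among the classes $[\rho_v]$, $v \in \bar S^{(L)}$, is a consequence of the relations $b_w$.

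Next I would translate linear independence into a rank statement. Writing $b_w = \sum_v \lambda_{w,v}\delta_v$ and $A_L(F_n) = (\lambda_{w,v})$, the previous step shows that $\sum_v\alpha_v[\rho_v] = 0$ holds if and only if the row vector $(\alpha_v)$ lies in the row space of $A_L(F_n)$. Since every column of $A_L(F_n)$ has entries summing to $0$ — a nonzero column carries exactly one $+1$ and one $-1$, while the all-zero columns are precisely those indexed by $v = a_j^{\pm L}$, i.e. the loops of the Martin--de Bruijn graph — the left kernel of $A_L(F_n)$ is spanned by the all-ones vector, hence is one-dimensional. A routine argument, identical to the monoid case, then shows that $B(W)$ is a basis of $\widehat{\mathcal C}(F_n)_L$ if and only if the submatrix $A_W$ of $A_L(F_n)$ formed by the $2n(2n-1)^{L-2}-1$ columns indexed by $\bar S^{(L)}\setminus W$ has full column rank.

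Since $A_W$ inherits the column structure of $A_L(F_n)$ (at most two nonzero entries per column, each $\pm 1$, summing to zero), the proof of Lemma~\ref{GraphReformulationGroup} applies verbatim and gives: $A_W$ has maximal rank $2n(2n-1)^{L-2}-1$ if and only if the subgraph $\Gamma$ of $\Gamma_L(\bar S)$ — equivalently of its loop-erasure $\Gamma_L(F_n)$ — on the vertex set $\bar S^{(L-1)}$ with edge set $\bar S^{(L)}\setminus W$ is connected. This is the equivalence (i)$\Leftrightarrow$(ii). For (ii)$\Leftrightarrow$(iii) one notes that $\Gamma$ has $2n(2n-1)^{L-2}$ vertices and $2n(2n-1)^{L-2}-1$ edges, and a graph with $k$ vertices and $k-1$ edges is connected precisely when it is a (spanning) tree. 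The dimension formula follows from $\dim\widehat{\mathcal C}(F_n)_L = \dim\R[F_n]_L - \dim K_L(F_n) = 2n(2n-1)^{L-1} - (2n(2n-1)^{L-2}-1) = 2n(2n-1)^{L-2}(2n-2)+1$, and the bijection with spanning trees is then immediate from (i)$\Leftrightarrow$(iii), connectedness of $\Gamma_L(\bar S)$ (established in Subsection~\ref{SubsecRelGroups}) ensuring that at least one such tree, hence one pure basis, exists.

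The only point requiring genuine care — and hence the main obstacle — is the one already confronted in Subsection~\ref{SubsecRelGroups}: verifying that cyclic-type cancellations in $F_n$ do not disturb the combinatorial structure of $A_L(F_n)$. Concretely, one must check that the two end-deletions of a reduced word $v$ of length $L$ coincide only for $v = a_j^{\pm L}$, so that the degenerate (all-zero) columns of $A_L(F_n)$ correspond exactly to the loops of $\Gamma_L(\bar S)$; since deleting loops and multiple edges does not change connectivity, the whole argument then reduces to the monoid one.
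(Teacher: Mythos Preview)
Your proposal is correct and is exactly the argument the paper has in mind: the proposition is stated with a \qed and the remark ``In complete analogy with Proposition~\ref{BasisMonoidTrees} one proves'', and you have faithfully carried out that analogy, including the group version of Remark~\ref{RemarkB0}, the reduction to the rank of the submatrix $A_W$, and the graph-theoretic reformulation via Lemma~\ref{GraphReformulationGroup}. Your verification of the edge/vertex counts and of the degenerate columns $v=a_j^{\pm L}$ is accurate, so nothing further is needed.
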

There are two exceptional cases $L = 0$ and $L=1$. The unique pure basis of $\widehat{\mathcal C}(F_n)_0$ is given by $\{\delta_e\}$, and the unique pure basis of $\widehat{\mathcal C}(F_n)_1$ is given by $B(\bar S)$. Both statements are immediate from our earlier computation $\dim K(F_n)_0 = \dim K(F_n)_1 = 0$. In general we can choose the following pure basis:
\begin{cor}\label{PureBasisGroup} Given $L \geq 0$ define $W(a_1; L)$ as follows: If $L = 0$, then $W(a_1; 0) := \{e\}$, if $L=1$, then $W(a_1; 1) := \bar S$ and if $L \geq 2$, then
\[W(a_1; L):= (\bar{S}^{(L)}\setminus \{w \in \bar{S}^{(L)}\mid w \text{ starts with }  a_1 \text{ or }a_2a_1^{-1}\})\cup\{a_1^L\}.\] Then $B(W(a_1; L)) :=\{\rho_w\mid w \in W(a_1; L)\}$ is a pure basis of $\widehat{\mathcal C}(F_n)_L$.
\end{cor}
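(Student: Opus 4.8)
The plan is to reduce the statement to the connectedness criterion of Proposition~\ref{BasisGroupsTrees} and then run the same kind of path-building argument that proves Corollary~\ref{PureBasisMonoid}. The cases $L=0$ and $L=1$ are precisely the two exceptional pure bases $\{[\rho_e]\}$ and $B(\bar S)$ recorded immediately before the corollary, since $W(a_1;0)=\{e\}$ and $W(a_1;1)=\bar S$; so from now on I would assume $L\geq 2$. The first routine check is the cardinality hypothesis of Proposition~\ref{BasisGroupsTrees}: the reduced words of length $L$ starting with $a_1$ number $(2n-1)^{L-1}$, those starting with $a_2a_1^{-1}$ number $(2n-1)^{L-2}$, these two families are disjoint, and $a_1^L$ lies in the first family. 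Hence the complement
\[
E:=\bar{S}^{(L)}\setminus W(a_1;L)=\{w\in\bar{S}^{(L)}\mid w\text{ starts with }a_1\text{ or }a_2a_1^{-1}\}\setminus\{a_1^L\}
\]
has $(2n-1)^{L-1}+(2n-1)^{L-2}-1=2n(2n-1)^{L-2}-1$ elements, so $|W(a_1;L)|=2n(2n-1)^{L-1}-2n(2n-1)^{L-2}+1=2n(2n-1)^{L-2}(2n-2)+1$, as required. Note that $|E|=2n(2n-1)^{L-2}-1=|\bar{S}^{(L-1)}|-1$, so once the relevant subgraph is shown connected it is automatically a spanning tree.

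By Proposition~\ref{BasisGroupsTrees} it then remains to show that the subgraph of $\Gamma_L(\bar S)$ with vertex set $\bar{S}^{(L-1)}$ and edge set $E$ is connected, and I would do this by joining an arbitrary vertex $w=s_1\cdots s_{L-1}$ to the fixed vertex $a_1^{L-1}$. If $s_1=a_1^{-1}$, then $a_2w\in\bar{S}^{(L)}$ is reduced (the only new junction is $a_2a_1^{-1}$) and starts with $a_2a_1^{-1}$, so it is an edge of $E$; it joins $w$ (delete the first letter) to the vertex obtained by deleting the last letter of $a_2w$, which starts with $a_2$. Hence after at most one such step one may assume $s_1\neq a_1^{-1}$. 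If $w\neq a_1^{L-1}$ and $s_1\neq a_1^{-1}$, then $a_1w$ is a reduced word of length $L$ starting with $a_1$ and different from $a_1^L$, hence an edge of $E$, joining $w$ to the vertex $a_1s_1\cdots s_{L-2}$, which again does not start with $a_1^{-1}$. Iterating this move at most $L-1$ times reaches $a_1^{L-1}$. Combined with the cardinality count, this shows the subgraph with edge set $E$ is a spanning tree of $\Gamma_L(\bar S)$, so by Proposition~\ref{BasisGroupsTrees} the set $B(W(a_1;L))$ is a pure basis of $\widehat{\mathcal C}(F_n)_L$.

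The only part that requires genuine care — the ``main obstacle'', though it is a minor one — is the bookkeeping about which two vertices an edge of the Martin--de Bruijn graph joins (deleting the first versus the last letter), together with verifying that when $w$ begins with $a_1^{-1}$ the word $a_2a_1^{-1}s_2\cdots s_{L-1}$ really is the available ``escape'' edge into $E$. One should also check the degenerate subcase $L=2$ (where the tail $s_2\cdots s_{L-1}$ is empty), to confirm that the moves $w\mapsto a_2a_1^{-1}$ and $s_1\mapsto a_1s_1$ remain reduced and land in $E$.
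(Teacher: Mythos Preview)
Your proposal is correct and follows essentially the same approach as the paper: reduce to the connectedness criterion of Proposition~\ref{BasisGroupsTrees} and join every vertex to $a_1^{L-1}$ by first using an edge $a_2a_1^{-1}\cdots$ to escape the case $s_1=a_1^{-1}$ and then repeatedly prepending $a_1$. Your version additionally spells out the cardinality count and the spanning-tree consequence, which the paper leaves implicit.
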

\begin{proof} For $L \leq 1$ there is nothing to show. For $L \geq 2$ we have to show that every vertex $w \in \bar{S}^{(L-1)}$ can be connected to $a_1^{L-1}$ using only edges starting with $a_1$ (but not equal to $a_1^L$) and $a_2a_1^{-1}$. If $w$ does not start with $a_1^{-1}$ we can argue as in the monoid case, using only edges of the first kind. If $w= a_1^{-1}s_2\dots s_{L-1}$ then an admissible path to $a_1^{L-1}$ is given by
\begin{eqnarray*}
a_1^{-1}s_2\dots s_{L-1} &\sim& a_2a_1^{-1}s_2\dots s_{L-2} \sim a_1a_2a_1^{-1}s_2\dots s_{L-3} \sim \dots \\ &\sim& a_1^{L-3}a_2a_1^{-1} \sim a_1^{L-2}a_2^{-1}\sim a_1^{L-1}.\quad \quad \quad \quad\quad \quad \quad \quad\quad \quad \quad\quad\qedhere
\end{eqnarray*}
\end{proof}
Again it is easy to pass from a family of pure bases to a family of compatible bases:
\begin{cor}\label{CompatibleBasisGroup} Given $L \geq 1$ let $W_L'$ denote the set of words of length $\leq L$ (including the empty word) not starting in $a_1$ or $a_2a_1^{-1}$ and not ending in $a_1^{-1}$ or $a_1a_2^{-1}$. Let $W_1 := W_1' \cup \{a_1^{-1}\}$ and $W_L := W_L'$ otherwise. Then $B(W_L) := \{\rho_w\mid w\in W_L\}$ is a basis of $\widehat{\mathcal C}(W_n)_L$, and these bases are compatible.
\end{cor}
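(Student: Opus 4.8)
The plan is to run, in the free-group setting, the argument used for Corollary~\ref{CompatibleBasisMonoid}, replacing the de~Bruijn graph by the Martin--de~Bruijn graph $\Gamma_L(\bar S)$ and the monoid left/right-extension relations by their group versions~\eqref{KirchhoffGroup}. There are three parts: (a) verify the cardinality identity $|W_L| = \dim\widehat{\mathcal C}(F_n)_L$; (b) prove by induction on $L$ that the classes $[\rho_w]$, $w \in W_L$, span $\widehat{\mathcal C}(F_n)_L$, which together with (a) makes them a basis; and (c) deduce compatibility, so that $\bigcup_L B(W_L)$ is the basis of Theorem~\ref{ThmBasis}.(ii).

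For (a) I would compare the dimension formula $\dim\widehat{\mathcal C}(F_n)_L = 2n(2n-1)^{L-2}(2n-2)+1$ of Proposition~\ref{BasisGroupsTrees} (and $\dim\widehat{\mathcal C}(F_n)_0=1$, $\dim\widehat{\mathcal C}(F_n)_1 = 2n$) with a direct enumeration of reduced words of length $\leq L$ avoiding the prefixes $a_1, a_2a_1^{-1}$ and the suffixes $a_1^{-1}, a_1a_2^{-1}$; for $L \geq 4$ the prefix and suffix constraints occupy disjoint positions and the count reduces to a geometric sum, while the small lengths are checked by hand. The single extra word $a_1^{-1}$ at level one is forced, just as the pure basis of $\widehat{\mathcal C}(F_n)_1$ is $B(\bar S)$ rather than $B(W_1')$.

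The substance is (b). The cases $L \leq 1$ are explicit. For $L \geq 2$, take the pure basis $B(W(a_1;L))$ supplied by Corollary~\ref{PureBasisGroup} and express each $\rho_w$, $w \in W(a_1;L)$, through $B(W_L)$ as follows. First, using the right-extension relations $r_v$, strip a forbidden suffix of $w$ one letter at a time: one step rewrites $\rho_{vs}$ as $\rho_v$ --- the counting function of a strictly shorter word, and hence (via the chain $\widehat{\mathcal C}(F_n)_{|v|} \subseteq \widehat{\mathcal C}(F_n)_{L-1}$) an element of $\widehat{\mathcal C}(F_n)_{L-1}$ --- plus counting functions of words of length $L$ whose suffix is now admissible. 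Then strip any forbidden prefix the same way using $l_v$, which does not disturb the suffixes. What remains is a combination of $\rho_u$ with $u \in W_L$ of length $L$ (so in $B(W_L)$) and of $\rho_v$ with $|v|<L$, the latter lying in $\widehat{\mathcal C}(F_n)_{L-1}={\rm span}\,B(W_{L-1})\subseteq{\rm span}\,B(W_L)$ by the inductive hypothesis. The pure powers $a_1^{\pm L}\in W(a_1;L)$ are treated by the same peeling, reducing to shorter pure powers plus admissible length-$L$ words.

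For (c), compatibility $B(W_L)\cap\widehat{\mathcal C}(F_n)_{L-1}=B(W_{L-1})$ follows from $\widehat{\mathcal C}(F_n)_{L-1}\subseteq\widehat{\mathcal C}(F_n)_L$ and the explicit form of the $W_L$, and then $\bigcup_L B(W_L)$ is a basis of $\widehat{\mathcal C}(F_n)=\bigcup_L\widehat{\mathcal C}(F_n)_L$, which is exactly the set in Theorem~\ref{ThmBasis}.(ii). The step I expect to cost the most effort is the peeling in (b): since the forbidden patterns have length up to two, appending or prepending one letter can create a new forbidden pattern (appending $a_2^{-1}$ after a word ending in $a_1$ yields the suffix $a_1a_2^{-1}$; prepending $a_2$ to a word starting with $a_1^{-1}$ yields the prefix $a_2a_1^{-1}$). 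One must therefore fix the order of operations --- suffixes before prefixes --- and check that each peeling step strictly decreases either the word length or the number of forbidden factors, so that the recursion terminates inside $W_L$; the passage from level $1$ to level $2$, where the extra word $a_1^{-1}$ enters, needs separate care.
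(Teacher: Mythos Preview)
Your plan is correct and follows the paper's own route: start from the pure basis $B(W(a_1;L))$ of Corollary~\ref{PureBasisGroup}, check the cardinality $|W_L|=\dim\widehat{\mathcal C}(F_n)_L$, and rewrite each $\rho_w$ with $w\in W(a_1;L)$ in terms of $B(W_L)$ using extension relations and induction on $L$, exactly as in the monoid case. The paper's proof is terser than yours but identical in substance. One simplification you can make: apart from the single exceptional word $a_1^L$, every $w\in W(a_1;L)$ already has an admissible prefix by definition, and right-extension (which only changes the last letter) preserves the prefix; so the prefix-stripping step and your worry about ``prepending $a_2$ to a word starting with $a_1^{-1}$'' never actually occur---only suffix-stripping via $r_v$ is needed for generic $w$, and a single application of $l_{a_1^{L-1}}$ handles $a_1^L$. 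Your termination concern about creating new forbidden suffixes is likewise moot: if $w=va_1^{-1}$ then $v$ cannot end in $a_1$ (reducedness), so no $vs'$ ends in $a_1a_2^{-1}$; if $w=va_2^{-1}$ with $v$ ending in $a_1$ then the admissible $s'$ exclude both $a_1^{-1}$ and $a_2^{-1}$, so again no forbidden suffix is created.
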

\begin{proof} Again we have to produce every element $\rho_w$ in $B(a_1; L)$ as a linear combination of elements in $B:=B(W_L)$. If $w$ does not end in $a_1^{-1}$ or $a_1a_2^{-1}$ then $\rho_w \in B$. Otherwise we can apply right-extension relations and argue inductively just as in the proof of Proposition~\ref{CompatibleBasisMonoid}.
\end{proof}
Now Theorem \ref{ThmBasis}.(ii) follows.

\subsection{A basis for $\mathcal B(F_n)$}\label{SecBasisQM}
We conclude this section by pointing out that Part (iii) of Theorem \ref{ThmBasis} follows from Part (ii). The proof is based on the following simple observation:
\begin{lem}\label{BasisQM}
Let $W \subset \mathcal F_n$ be any set of reduced words such that $B(W) := \{\rho_w\,|\,w \in W\}$ is a basis of $\widehat{\mathcal C}(F_n)$ and
such that $w^{-1} \in W$ for all $w \in W$.  Let $W^+$ be a subset of $W$ which intersects each of the sets $\{w,w^{-1}\}$ in a single element. Then $B(W^+) :=\{\phi_w\mid w \in W^+\}$ is a basis of $\widehat{\mathcal B}(F_n)$.\qed
\end{lem}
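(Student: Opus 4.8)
The plan is to realise $\widehat{\mathcal B}(F_n)$ as the \emph{antisymmetric part} of $\widehat{\mathcal C}(F_n)$ under the involution induced by inversion, and then to observe that, by the hypothesis $w^{-1}\in W$, this involution merely permutes the basis $B(W)$ in pairs $\{[\rho_w],[\rho_{w^{-1}}]\}$. First I would introduce a linear involution $\tau$ of $\widehat{\mathcal C}(F_n)$ as follows: the map sending a function $f\colon F_n\to\R$ to the function $v\mapsto f(v^{-1})$ clearly preserves $\ell^\infty(F_n)$, and it sends $\rho_w$ to $\rho_{w^{-1}}$, since an occurrence of $w$ in $v$ corresponds, read backwards, to an occurrence of $w^{-1}$ in $v^{-1}$. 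Hence it preserves the subspace $\mathcal C(F_n)$ and descends to an involution $\tau$ of $\widehat{\mathcal C}(F_n)$ with $\tau[\rho_w]=[\rho_{w^{-1}}]$. Since $\widehat{\mathcal B}(F_n)$ is by definition spanned by the classes $[\phi_v]=[\rho_v]-[\rho_{v^{-1}}]=(\mathrm{id}-\tau)[\rho_v]$, and the $[\rho_v]$ span $\widehat{\mathcal C}(F_n)$, this gives $\widehat{\mathcal B}(F_n)=\im(\mathrm{id}-\tau)$, which for an involution coincides with the $(-1)$-eigenspace of $\tau$.

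Next I would use that $w\mapsto w^{-1}$ permutes $W$. Since $F_n$ is torsion-free and $e\notin W$ (the case relevant to the applications; in any case $\phi_e=0$), the set $W$ is partitioned into two-element orbits $\{w,w^{-1}\}$, and $W^+$ picks exactly one representative from each. For $w\in W^+$ set $u_w:=[\rho_w]+[\rho_{w^{-1}}]$ and $\phi_w:=[\rho_w]-[\rho_{w^{-1}}]$; on each orbit this is an invertible linear change of coordinates, so $\{u_w\mid w\in W^+\}\cup\{\phi_w\mid w\in W^+\}$ is again a basis of $\widehat{\mathcal C}(F_n)$. In particular $\{\phi_w\mid w\in W^+\}$ is linearly independent, and moreover $\tau u_w=u_w$ while $\tau\phi_w=-\phi_w$.

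It then remains to check that $\{\phi_w\mid w\in W^+\}$ spans $\widehat{\mathcal B}(F_n)$. Given $v\in F_n$, expand $[\rho_v]=\sum_{u\in W}c_u[\rho_u]$ in the basis $B(W)$ (a finite sum). Applying $\tau$ and reindexing the sum by $u\mapsto u^{-1}$ yields $[\rho_{v^{-1}}]=\sum_{u\in W}c_{u^{-1}}[\rho_u]$, hence
\[
[\rho_v]-[\rho_{v^{-1}}]=\sum_{u\in W}(c_u-c_{u^{-1}})[\rho_u]=\sum_{w\in W^+}(c_w-c_{w^{-1}})\,\phi_w,
\]
where in the last equality the contribution of the pair $\{w,w^{-1}\}$ equals $(c_w-c_{w^{-1}})([\rho_w]-[\rho_{w^{-1}}])$ because the coefficients of $[\rho_w]$ and of $[\rho_{w^{-1}}]$ are negatives of one another. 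Thus every spanning class $[\phi_v]$ of $\widehat{\mathcal B}(F_n)$ lies in $\mathrm{span}\{\phi_w\mid w\in W^+\}$; the reverse inclusion is clear since each $\phi_w$ is itself a Brooks class. Together with the linear independence established above, this shows that $B(W^+)=\{\phi_w\mid w\in W^+\}$ is a basis of $\widehat{\mathcal B}(F_n)$.

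The argument is essentially elementary linear algebra — diagonalising an involution that permutes a basis — so the only step demanding genuine care is the well-definedness of $\tau$, i.e.\ that composition with inversion respects the relation ``differ by a bounded function''; this is immediate here, but it is where the real content sits. One can, if preferred, avoid naming $\tau$ altogether and phrase the same computation as the statement that the $B(W)$-coordinates of $[\rho_{v^{-1}}]$ are obtained from those of $[\rho_v]$ by the substitution $u\mapsto u^{-1}$.
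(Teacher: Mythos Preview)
Your proof is correct and is exactly the approach of the paper, spelled out in full detail: the paper's one-line argument simply says that $\widehat{\mathcal B}(F_n)\subset\widehat{\mathcal C}(F_n)$ is the fixed point set of the involution $[\rho_w]\mapsto -[\rho_{w^{-1}}]$, which is your $-\tau$, so their fixed subspace is your $(-1)$-eigenspace. Your careful verification that $\tau$ is well defined on the quotient, and your explicit handling of the potential degenerate case $e\in W$, are points the paper leaves implicit.
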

\begin{proof} This is immediate from the fact that $\widehat{\mathcal B}(F_n) \subset \widehat{\mathcal C}(F_n)$ is the fixed point set of the linear involution mapping $\rho_w$ to $-\rho_{w^{-1}}$.
\end{proof}
The basis of $\widehat{\mathcal C}(F_n)$ constructed in Corollary \ref{CompatibleBasisGroup} does not satisfy the assumptions of the lemma. However, we can modify it as follows. Let $W_0 := \bigcup W_L$, where $W_L$ is defined as in Corollary \ref{PureBasisGroup} and let $W := (W_0 \setminus \{e\}) \bigcup \{a_1\}$. Then $B(W)$ is another basis for $\widehat{\mathcal C}(F_n)$ and $W$ satisfies the assumptions of Lemma \ref{BasisQM}. This shows that Part (iii) of Theorem \ref{ThmBasis} follows indeed from Part (ii), and thereby finishes the proof of the theorem.

\section{Sums of counting functions and weighted trees}\label{SecTrees}

\subsection{Representing sums of counting functions by weighted trees}\label{SecTreeRepresentation}

The goal of this subsection is to provide a graphical representation for sums of counting functions on free groups and monoids. This will help us to visualize certain operations on counting functions and allow us to decide whether a given sum of counting functions is bounded, i.e. represents the trivial element in $\widehat{\mathcal C}(M_n)$ or $\widehat{\mathcal C}(F_n)$. We start by discussing the case of monoids.

Denote by $T(M_n)$, or $T_n$ for short, the right-Cayley tree of $M_n$ with respect to $S$, i.e. the vertex set of $V(T_n)$ of $T_n$ is given by $V(T_n) = M_n$ and $w \in M_n$ is connected by an edge to $wa_j$ for each $j=1, \dots, n$. We think of $T_n$ as a coloured rooted tree with root $e$, where edges are coloured by the generating set $S$. We define the \emph{depth} of a vertex $w$ as the word length of $w$ or, equivalently, the distance of the vertex from the root. By a \emph{weight} on $T_n$ we mean a finitely supported real-valued function $\alpha$ from the vertices of $T_n$ to $\R$. We can visualize the pair $(T_n, \alpha)$ by drawing the finite subtree of $T_n$ spanned by the union of the support of $\alpha$ together with the root and labelling every vertex $w$ by $\alpha(w)$. The \emph{depth} of the weighted tree $(T_n, \alpha)$ is defined as $-\infty$ if $\alpha = 0$, and as
\begin{equation}\label{depthtree}
 L(T_n, \alpha) := \max\{|w|\mid \alpha(w) \neq 0\}
\end{equation}
otherwise. The following picture shows an example of a weighted tree of depth $2$ for $n=3$:
\begin{center}
\begin{tikzpicture}
  [scale=.8,auto=left,every node/.style={circle,fill=blue!20}]
  \node(0) at (15,5) {$17$};
  \node (1) at (12,3) {$9$};
  \node (2) at (15,3)  {$-6$};
  \node (3) at (18,3)  {$-1$};
   \node (4) at (11,1) {$4$};
  \node (5) at (12,1)  {$2$};
  \node (6) at (13,1)  {$1$};
  \foreach \from/\to in {0/1, 0/2, 0/3, 1/4, 1/5, 1/6} 
    \draw (\from) -- (\to);
\end{tikzpicture}
\end{center}
Given such a weighted tree $(T_n, \alpha)$ we define the associated sum of counting functions by
\[
c(T_n, \alpha) := \sum_{w \in M_n} \alpha(w) \cdot \rho_{w} \in \mathcal C(M_n).
\]
We then say that the weighted trees $(T_n, \alpha)$ and $(T_n, \alpha')$ are \emph{equivalent} if $[c(T_n, \alpha)] = [c(T_n, \alpha)' ] \in \widehat{\mathcal C}(M_n)$. Our goal is to understand geometrically what it means for two weighted trees to be equivalent. Since there is an obvious geometric way to subtract weighted trees, it suffices to understand geometrically whether
$[c(T_n, \alpha)] = [0]$ for a given weighted tree  $(T_n, \alpha)$.

Everything we said so far carries over verbatim to the case of a free group $F_n$, if we replace the tree $T(M_n)$ by the right-Cayley tree $T_n := T(F_n)$ of $F_n$ with respect to the generating set $S$. As in the monoid case, we also would like to understand in the group case the geometric meaning of the condition $[c(T_n, \alpha)] = [0]$.

\subsection{Operations on weighted trees}\label{SecTreeOperations}
Throughout this subsection let $T_n$ be either $T(M_n)$ or $T(F_n)$. We describe some operations which transform a weighted tree $(T_n, \alpha)$ into an equivalent weighted tree $(T_n, \alpha')$.

Let $V(T_n)$ be the vertex set of $T_n$. Given $w \in V(T_n)$ of depth $\geq 1$, we refer to the vertices on the geodesic between $e$ in $w$ (including $e$, but excluding $w$) as the \emph{ancestors} of $w$. The unique ancestor $v={\rm Fa}(w)$ of $w$ adjacent to $w$ is called its \emph{father} and the vertices with the same father as $w$ are called its \emph{brothers}. Their collection is called the \emph{brotherhood} of $w$ and denoted by ${\rm Br}(w)$. We say that a brotherhood $B$ is a \emph{constant brotherhood} with respect to $\alpha$ if $\alpha|_B$ is constant, and a \emph{non-constant brotherhood} otherwise. We also define the \emph{depth of a brotherhood} $B$ as the depth of any of its members and denote by ${\rm Fa}(B)$ the common father of the brotherhood. Note that by definition the depth of a brotherhood is $\geq 1$.

If two vertices $u, v \in M_n$ have the same depth $|u|=|v|=L \geq 1$ and differ only by the first letter, then we say that they are \emph{related} and write $u \smile v$. In this case we also say that the brotherhood $B_1:= \mathrm{Br}(u)$ and $B_2 := \mathrm{Br}(v)$ are related and write $B_1\smile B_2$. If $B_1 \smile B_2$, then there is a unique bijection $\iota_{B_1, B_2}: B_1 \to B_2$ with the property that $\iota_{B_1, B_2}(w) \smile w$.

In the monoid case, each brotherhood has exactly $n$ elements, and every brotherhood of depth $\geq 2$ has exactly $n$ related brotherhoods including itself. In the group case, every brotherhood of depth $\geq 2$ has $2n-1$ elements, and there is a unique exceptional brotherhood of depth $1$ containing $2n$ elements. In this case, every brotherhood of depth $\geq 2$ has $2n-1$ related brotherhoods including itself. We now introduce the following two types of operations.

Firstly, let $B$ be a brotherhood of depth $\geq 1$ with father $w$. In the monoid case, let $s \in S$, and in the group case let $s \in \bar S \setminus\{w_{\rm fin}^{-1}\}$, where $w_{\rm fin}$ denotes the last letter of $w$. Then the \emph{partial reduction} of $B$ along $s$ is the operation $(T_n, \alpha) \mapsto {\rm Red}_{B, s}(T_n, \alpha) := (T_n, \alpha')$, where $\alpha' \in \R[M_n]$ is given as follows: Let $v_0 \in B$ be the unique element with final letter $s$. Then $\alpha'(w):= \alpha(w) + \alpha(v_0)$, $\alpha'(v) := \alpha(v)-\alpha(v_0)$ for all $v \in B$ and $\alpha'(v) := \alpha(v)$ in all other cases. Then, by the right extension relations $r_w$, the operation ${\rm Red}_{B, s}$, transforms every weighted tree into an equivalent weighted tree. Note that $\alpha'$ differs from $\alpha$ only along the brotherhood $B$ and its father.

A special case appears if $B$ is a constant family with respect to $\alpha$. In this case all the partial reductions ${\rm Red}_{B, a_j}$ have the same effect on $(T_n, \alpha)$, and we have ${\rm Red}_{B, a_j}\alpha|_B \equiv 0$. In this case we refer to ${\rm Red}_{B, a_j}(T_n, \alpha)$ as the \emph{(complete) reduction} of $(T_n, \alpha)$ along $B$. The following pictures show an effect of two subsequent reductions:\\
\begin{tikzpicture}
  [scale=.8,auto=left,every node/.style={circle,fill=blue!20}]
  \node(0) at (5,5) {$-1$};
  \node (1) at (2,3) {$0$};
  \node (2) at (5,3)  {$-6$};
  \node (3) at (8,3)  {$-1$};
   \node (4) at (1,1) {$4$};
  \node (5) at (2,1)  {$4$};
  \node (6) at (3,1)  {$4$};
     \node (7) at (7,1) {$1$};
  \node (8) at (8,1)  {$1$};
  \node (9) at (9,1)  {$1$};
   \node(0a) at (16,5) {$-1$};
  \node (1a) at (13,3) {$4$};
  \node (2a) at (16,3)  {$-6$};
  \foreach \from/\to in {0/1, 0/2, 0/3, 1/4, 1/5, 1/6, 3/7, 3/8, 3/9, 0a/1a, 0a/2a} 
    \draw (\from) -- (\to);
\end{tikzpicture}\\

We now define a second operation called \emph{transfer} which corresponds to the left-extension relation. Since we chose to work with \emph{right}-Cayley graphs, the geometric meaning of this operation is less natural.

To define transfer, let $B$ be a brotherhood of depth $L \geq 2$. 
Then the \emph{transfer of $B$} is the operation $(T_n, \alpha)\mapsto {\rm Tr}_{B}(T_n, \alpha) := (T_n, \alpha')$, where $\alpha'$ is given as follows:  If $v \in B$ and $v'$ is obtained from $v$ by deleting the first letter, then $\alpha'(v) = 0$ and $\alpha'(v') = \alpha(v')+\alpha(v)$. Moreover, 
if $v$ is contained in a brotherhood $B'$ related to $B$, then $\alpha'(v):=\alpha(v) -\alpha(\iota_{B',B}(v))$. Finally, $\alpha'(v) := \alpha(v)$ for all other vertices $v$.

The following picture shows the effect of transfer applied to the brotherhood labelled $1,2,3$.
\begin{center}
\begin{tikzpicture}
  [scale=.8,auto=left,every node/.style={circle,fill=blue!20}]
  \node(0) at (5,5) {$6$};
  \node (1) at (2,3) {$4$};
  \node (2) at (5,3)  {$5$};
  \node (3) at (8,3)  {$4$};
   \node (4) at (1,1) {$1$};
  \node (5) at (2,1)  {$2$};
  \node (6) at (3,1)  {$3$};
  \node(20) at (4,1) {$4$};
  \node(21)at (5,1)  {$5$};
  \node(22)at (6,1)   {$4$};
    \node (7) at (7,1) {$5$};
  \node (8) at (8,1)  {$4$};
  \node (9) at (9,1)  {$5$};
    \node(10) at (15,5) {$6$};
  \node (11) at (12,3) {$4+1$};
  \node (12) at (15,3)  {$5+2$};
  \node (13) at (18,3)  {$4+3$};
   \node (14) at (11,1) {$4-1$};
  \node (15) at (13,1)  {$5-2$};
  \node (16) at (15,1)  {$4-3$};
    \node (17) at (17,1) {$5-1$};
  \node (18) at (19,1)  {$4-2$};
  \node (19) at (21,1)  {$5-3$};
  \foreach \from/\to in {0/1, 0/2, 0/3, 1/4, 1/5, 1/6, 2/20, 2/21, 2/22, 3/7, 3/8, 3/9, 10/11, 10/12, 10/13, 13/17, 13/18, 13/19, 12/14, 12/15, 12/16}
    \draw (\from) -- (\to);
\end{tikzpicture}
\end{center}

By definition, transfer has the following properties: ${\rm Tr}_B$ maps every weighted tree to an equivalent one. If changes the values of $\alpha$ only on (certain) elements of depth $L-1$ and on those elements of depth $L$ which are related to a member of $B$. Moreover, ${\rm Tr}_B\alpha|_B \equiv 0$. We emphasize that we will only apply transfer to brotherhoods of depth at least $2$.

\subsection{Unbalanced weighted trees}
We now define a special class of weighted trees; as in the last subsection $T_n$ denotes either $T(M_n)$ or $T(F_n)$.
\begin{definition} A weighted tree $(T_n, \alpha)$ of depth $L\geq 2$ is called \emph{unbalanced} if there exist two related brotherhoods $B_1 \smile B_2$ of depth $L$ such that $\alpha|_{B_1} \equiv 0$ and $B_2$ is non-constant with respect to $\alpha$.
\end{definition}
 It is easy to see from the picture, whether a given weighted tree is unbalanced. For example, the weighted tree given in Subsection \ref{SecTreeRepresentation} is unbalanced.
\begin{thm}\label{Unbalanced} Every unbalanced weighted tree represents a non-trivial element in $\widehat{\mathcal C}(M_n)$ or $\widehat{\mathcal C}(F_n)$.
\end{thm}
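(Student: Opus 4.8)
The strategy is to exhibit an explicit test element — a cyclically reduced word $c$ — whose $L$-certificate $\langle c \rangle_L$ detects a nonzero coefficient in $c(T_n,\alpha)$, thereby showing via the certificate lemmas of Section \ref{SecRelations} that $c(T_n,\alpha)$ is unbounded. Suppose $(T_n,\alpha)$ is unbalanced of depth $L$, witnessed by related brotherhoods $B_1 \smile B_2$ of depth $L$ with $\alpha|_{B_1}\equiv 0$ and $\alpha|_{B_2}$ non-constant. Write $\mathrm{Fa}(B_2) = v$, a word of length $L-1$, so the members of $B_2$ are $vs$ for $s$ ranging over the allowed final letters; since $\alpha|_{B_2}$ is non-constant, there are two such letters $s, s'$ with $\alpha(vs) \neq \alpha(vs')$. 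The members of $B_1$ are $v's$ where $v'$ differs from $v$ only in the first letter, and $\alpha(v's)=0$ for all $s$.

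\textbf{Key steps.} First I would recall (from Lemma \ref{CertificateFormula} and its group analogue, together with the certificate lemmas) that for a cyclically reduced word $c$ one has $\langle c\rangle_L(c(T_n,\alpha)) = \sum_w \alpha(w)\,m(c,w)$, where $m(c,w)$ is the number of cyclic subwords of $c$ of length $L$ equal to $w$, and that if $c(T_n,\alpha)$ were bounded this sum would vanish. So it suffices to construct a cyclically reduced $c$ for which $\sum_w \alpha(w)\, m(c,w) \neq 0$. The idea is to build $c$ so that among its cyclic $L$-subwords, the only ones in $\mathrm{supp}(\alpha)$ are some copies of a single member $vs \in B_2$: then the sum equals $\alpha(vs)$ times a positive count, hence nonzero. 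Concretely, one takes a long power $c = (vs)^N$ (or, if that is not cyclically reduced, inserts a harmless long block of a fresh letter $a_j^{\pm 1} \neq$ anything occurring, as was done repeatedly in Section \ref{SecRelations} to kill cyclic cancellation), and chooses $v, s$ so that no cyclic $L$-subword of $c$ other than $vs$ itself lies in $\mathrm{supp}(\alpha)$. The subtlety is that other cyclic $L$-subwords of $(vs)^N$ — the ''rotations'' $s_2\cdots s_{L-1} s \cdot (\text{prefix of } vs)$ — might accidentally lie in $\mathrm{supp}(\alpha)$; to avoid this, I would instead use two carefully chosen words and take a difference of certificates.

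\textbf{The cleaner route via a difference.} Rather than pin down a single $c$, the robust approach is: among the allowed final letters pick $s$ realizing the maximum of $|\alpha|$ on $B_2$, so $\alpha(vs) \neq 0$. Form $c = v s a_j^M$ and $c' = v' s a_j^M$ where $v' \smile v$ (the father of $B_1$), $a_j^{\pm 1}$ is a letter distinct from all letters appearing in $v, v'$ and their inverses and from $s^{\pm1}$, and $M$ is large; both are cyclically reduced. Every cyclic $L$-subword of $c$ either contains a block of $a_j$'s (hence is not in $\mathrm{supp}(\alpha)$, once $M \gg L$, because all vertices of $\mathrm{supp}(\alpha)$ have length $\leq L$ and we may also assume $a_j$ does not label any edge in the drawn finite subtree — or if it does, replace the padding letter by one that doesn't, which always exists for $n \geq 2$ after possibly enlarging to consider inverses) or is exactly the word $vs$. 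Likewise for $c'$ the only relevant cyclic $L$-subword is $v's$, which has $\alpha$-weight $0$. Hence $\langle c\rangle_L(c(T_n,\alpha)) = \alpha(vs) \neq 0$ while $\langle c'\rangle_L(c(T_n,\alpha)) = 0$; either identity alone contradicts boundedness of $c(T_n,\alpha)$, so $[c(T_n,\alpha)] \neq [0]$.

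\textbf{Main obstacle.} The genuine difficulty is the bookkeeping around cyclic cancellation and ''accidental'' cyclic subwords: one must verify that the padding block $a_j^M$ both renders $c$ cyclically reduced and guarantees that the cyclic $L$-subwords straddling the junction $\cdots s\,a_j\cdots$ and $\cdots a_j\, v_1 \cdots$ are never elements of $\mathrm{supp}(\alpha)$. This is exactly the kind of argument used in Step 2 of Subsections \ref{SubsecRelMonoids} and \ref{SubsecRelGroups}, so I expect it to go through by the same device of choosing the padding letter outside a finite forbidden set (possible since $|\bar S| = 2n \geq 4$ and $\mathrm{supp}(\alpha)$ is finite). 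In the monoid case the word-length bound $|w| \le L$ for $w \in \mathrm{supp}(\alpha)$ already forces any $L$-subword containing an $a_j$ from a block of length $\geq 2$ to lie outside $\mathrm{supp}(\alpha)$ unless $a_j$ itself occurs in some supported word, and that single exceptional letter can simply be avoided. Once this is pinned down, the proof is a two-line application of the certificate machinery.
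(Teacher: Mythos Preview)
Your certificate approach is a genuinely different route from the paper's: the paper proves the theorem by applying transfers and partial reductions to $(T_n,\alpha)$ until the resulting weight is supported on the explicit basis of Corollary~\ref{CompatibleBasisMonoid} (resp.\ \ref{CompatibleBasisGroup}) while remaining nonzero, whereas you try to detect nontriviality by evaluating the homogenization at a single test word. That is a natural idea, but your execution has two genuine gaps. First, the quantity you want is $\widehat{c(T_n,\alpha)}(c)=\sum_{w}\alpha(w)\,\widehat{\rho_w}(c)$, which counts cyclic occurrences in $c$ of \emph{every} $w\in\mathrm{supp}(\alpha)$, not only those of length $L$; the functional $\langle c\rangle_L$ from Section~\ref{SecRelations} lives on the pure space $\R[M_n]_L$, and $\alpha$ is not pure. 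Thus in $c=vs\,a_j^M$ all proper subwords of $vs$ may lie in $\mathrm{supp}(\alpha)$ and contribute, so your claim $\widehat{f}(c)=\alpha(vs)$ is unjustified. Second, a padding letter $a_j$ absent from every word of $\mathrm{supp}(\alpha)$ need not exist --- already in $M_2$ this typically fails.

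Both gaps are repaired by a four-term alternating sum. Pick $s,s'$ with $\alpha(vs)\neq\alpha(vs')$, set $v'=\mathrm{Fa}(B_1)$, choose any padding word $p$ with $|p|\geq L$ making $c_1=vs\,p$, $c_2=vs'\,p$, $c_3=v's\,p$, $c_4=v's'\,p$ cyclically reduced (no freshness needed; in the group case $|\bar S|\geq 4$ suffices), and put $f=c(T_n,\alpha)$. In
\[
\widehat{f}(c_1)-\widehat{f}(c_2)-\widehat{f}(c_3)+\widehat{f}(c_4)
\]
every contribution cancels except the full-length one: subwords inside $p$ cancel trivially; subwords straddling the $w_i\mid p$ junction depend only on a proper suffix of $w_i$, hence agree for $c_1,c_3$ and for $c_2,c_4$; subwords straddling the cyclic $p\mid w_i$ junction depend only on a proper prefix of $w_i$, hence agree for $c_1,c_2$ and for $c_3,c_4$; and for subwords inside $w_i$ the same prefix/suffix pairing kills all lengths $<L$. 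What remains is $\alpha(vs)-\alpha(vs')-\alpha(v's)+\alpha(v's')=\alpha(vs)-\alpha(vs')\neq 0$, which forces $\widehat{f}\not\equiv 0$ and hence $[c(T_n,\alpha)]\neq 0$. With this correction your approach works and is arguably more direct than the paper's reduction-to-basis argument; but as written, the single-certificate claim fails.
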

We discuss the proof separately in the monoid case and in the group case:
\begin{proof}[Proof of Theorem \ref{Unbalanced} in the monoid case]

Using the operations defined in Subsection \ref{SecTreeOperations} we will transform $(T_n, \alpha)$ into an equivalent weighted tree, which is non-trivial for obvious reasons. Let $B_1 \smile B_2$ be brotherhoods of depth $L:={\rm depth}(T_n, \alpha)$ such that $\alpha|_{B_1} \equiv 0$ and $B_2$ is non-constant with respect to $\alpha$, and let $a_i$, respectively $a_j$ be the first letters of ${\rm Fa}(B_1)$ and ${\rm Fa}(B_2)$.

Firstly, we transfer all brotherhoods of depth $L$ in the subtree $a_iM_n$ except for the brotherhood $B_1$. The weights of $B_2$ remain the same, because only the transfer of $B_1$ could affect $B_2$, but $B_1$ was not transferred. After these transfers, all coefficients in the level $L$ in $a_iM_n$ are equal to $0$. Secondly, we perform a partial reduction of all brotherhoods in the level $L$ with respect to the ending $a_i$. Since $B_2$ was non-constant, it remains non-constant under these partial reductions. Since all the brotherhoods in $a_iM_n$ had coefficients $0$, they also remain $0$.

Now we repeat the same procedure in the levels $l=L-1, L-2, \ldots, 2$. Namely, first we transfer all brotherhoods having depth $l$ from the subtree $a_iM_n$. This affects values in levels $\leq l$ and makes all coefficients in the subtree $a_iM_n$ in the levels $l,\ldots, L$ equal to $0$. Secondly, we apply partial reduction with respect to the letter $a_i$ in all brotherhoods (except those of the subtree $a_iM_n$) of level $l$. This affects level $l-1$ and makes all coefficients in level $l$ of words ending with $a_i$ equal to $0$. The brotherhood $B_2$ remains non-constant throughout.

Finally we reach a  weighted tree $(T_n, \alpha')$ equivalent to $(T_n, \alpha)$ with the following properties: If $w$ is any word of depth $\geq 2$ which starts of ends in $a_i$, then $\alpha'(w) = 0$. Morover, $B_2$ is non-constant with respect to $\alpha'$. We now do one final reduction of the brotherhood ${\rm Br}(a_1)$ with respect to $a_i$ to obtain yet another equivalent weighted tree $(T_n, \alpha'')$. Now $\alpha''$ vanishes on all words $w$ starting or ending in $a_i$, but is not equal to $0$ (since $B_2$ is non-constant). It then follows from Corollary \ref{CompatibleBasisMonoid}  that $[\alpha]=[\alpha''] \neq 0$.
\end{proof}
The strategy of the proof can be described as clearing out all coefficients of vertices starting or ending in $a_i$. This strategy works because of Corollary \ref{CompatibleBasisMonoid}. In the group case we have to replace Corollary \ref{CompatibleBasisMonoid} by Corollary \ref{CompatibleBasisGroup}. We therefore have to clear out all coefficients of vertices
starting in $a_1$ or $a_2a_1^{-1}$ or ending in $a_1^{-1}$ or $a_1a_2^{-1}$, except for $a_1^{-1}$. This is slightly more complicated than in the monoid case, but ultimately works the same way.

\begin{proof}[Proof of Theorem \ref{Unbalanced} in the group case]
By assumption we have two brotherhoods $B_1 \smile B_2$ of depth $L:={\rm depth}(T_n, \alpha)$ such that $\alpha|_{B_1} \equiv 0$ and $B_2$ is non-constant with respect to $\alpha$. Without loss of generality we may assume that the first letters of ${\rm Fa}(B_1)$ and ${\rm Fa}(B_2)$ respectively are $a_1$ and $a_2$. Note that the second letter of ${\rm Fa}(B_1)$, and hence also of ${\rm Fa}(B_2)$ canot be $a_1^{-1}$.

Let $\alpha'$ be obtained from $\alpha$ by applying the following operations: Firstly, apply transfer to all brotherhoods of depth $L$ in the subtree of reduced words starting from $a_1$. Secondly, apply transfer to all brotherhoods of depth $L$  in the subtree of reduced words starting from $a_2a_1^{-1}$. Finally, apply a partial reduction ${\rm Red}_{B, s}$ for every non-zero brotherhood of depth $L$, where $s$ depends on the last letter of ${\rm Fa}(B)$. If this last letter happens to be $a_1$, then we choose $s:=a_2^{-1}$, otherwise we choose $s:=a_1^{-1}$.

We now consider the values of $\alpha'$ on words $w$ of depth $L$. Assume first that $w$ starts with $a_1$. Then $\alpha'(w) = 0$ after the first transfer step. The second transfer step only transfers into words whose second letter is $a_1^{-1}$, hence does not change $\alpha'(w)$. Since also the partial reductions do not influence the family of $w$, we get $\alpha'(w) = 0$. Similarly, if $w$ starts with $a_2a_1^{-1}$, then $\alpha'(w) = 0$. Finally, if $w$ ends with $a_1^{-1}$ or with $a_1a_2^{-1}$, then $\alpha'(w)=0$, since $w$ gets cleared in the partial reduction steps. On the other hand, we claim that the brotherhood $B_2$ does not get cleared completely and in fact remains non-constant for $\alpha'$. The unique brotherhood with initial letter $a_1$ related to $B_2$ is $B_1$, and this one does not transfer anything over in the first step, since $\alpha|_{B_1}\equiv 0$. Since the second letter of ${\rm Fa}(B_2)$ is not $a_1^{-1}$, the values of $\alpha'$ on $B_2$ also do not change in the second transfer step. In the partial reduction step, the value of $\alpha'$ on $B_2$ is changed, however, a partial reduction cannot turn a non-constant family into a non-constant family.

We can now repeat the same procedure on levels $l=L-1, L-2, \ldots, 2$. Ultimately we end up with a weighted tree $(T_n, \alpha')$ equivalent to $(T_n, \alpha)$ with the following properties: If $w$ is any word of depth $\geq 2$ which starts with $a_1$  or $a_2a_1^{-1}$ or end with $a_1^{-1}$ or $a_1a_2^{-1}$, then $\alpha'(w) = 0$. Moreover, there exists $w_0 \in B_2$ with $\alpha(w_0) \neq 0$.

Now $\alpha'$ is equivalent to $\alpha''$ as given by $\alpha''(s) := \alpha'(s)-\alpha'(a_1)$ for all $s \in \bar S$, $\alpha''(e) := \alpha'(e)+\alpha'(a_1)$ and $\alpha''(w) = \alpha'(w)$ for all words $w$ of length $\geq 2$. Moreover, if $W_L$ is defined as in Corollary \ref{CompatibleBasisGroup}, then $\alpha''(w) = 0$ for all $w \not \in \bigcup W_L$ and $\alpha''(w_0) \neq 0$. It then follows from Corollary \ref{CompatibleBasisGroup} that $\alpha''$ and hence $\alpha$ does not represent the $0$ function.
\end{proof}

\section{Deciding boundedness for sums of counting functions}\label{SecAlgorithmsNaive}

We now present an algorithm to decide whether a given sum of counting functions is bounded, which is based on Theorem \ref{Unbalanced}. The basic strategy is as follows: Represent the given function by a weighted tree, and try to transform this weighted tree either into the empty weighted tree or an unbalanced weighted tree using the operations discussed in Subsection \ref{SecTreeOperations}. If you reach the empty tree, then the initial function was bounded, and if you reach an unbalanced weighted tree, then the function was unbounded. To obtain an actual algorithm, we have to ensure that we either reach an unbalanced tree or the empty tree within a finite number of operations.

In the monoid case, the algorithm looks as follows:

\noindent\hrulefill
\begin{center}
Algorithm \textsc{Decide Triviality in $\widehat{\mathcal C}(M_n)$}
\end{center}
\noindent \textsc{Input}: Weighted tree $(T_n, \alpha)$.

\noindent \textsc{Output}: \emph{Trivial} or \emph{Non-trivial} according to whether $[c(T_n,\alpha)] = [0]$ or not.

\begin{enumerate}[\textsc{Step }1]
\item Let $\alpha' := \alpha$, and let $l$ be the depth of $\alpha'$.
\item While $l \geq 2$ repeat the following steps to the tree $\alpha'$:
\begin{enumerate}
\item Transfer all brotherhoods which start with $a_1$ and have depth $l$.
\item Reduce all constant brotherhoods of depth $l$.
\item If the depth of $\alpha'$ is still $l$, return \emph{non-trivial} and stop the algorithm. Otherwise, replace $l$ by the new length of $\alpha'$.
\end{enumerate}
\item If $\alpha'(a_i) = -\alpha'(e)$ for each $i=1, \dots, n$, then return \emph{trivial}, otherwise return \emph{non-trivial}.
\end{enumerate}
\noindent \hrulefill

Since $l$ decreases by at least one in each iteration of \textsc{Step 2}, the algorithm terminates. Let us verify correctness of the algorithm: Since $\alpha'$ is equivalent to $\alpha$ at all stages, if the algorithm returns \emph{trivial}, then indeed $[c(T_n,\alpha)] = [0]$. Conversely, if the algorithm returns \emph{non-trivial} in \textsc{Step 2c}, then we have reached an unbalanced tree, so indeed $[c(T_n,\alpha)] \neq [0]$ by Theorem \ref{Unbalanced}. (The theorem applies, since every brotherhood is related to a brotherhood with initial letter $a_1$.) Also, if the algorithm returns \emph{non-trivial} in \textsc{Step 3}, then it follows from Corollary \ref{PureBasisMonoid} applied to $L =1$ that $[\alpha]\neq 0$. Thus the algorithm works correctly.

Almost the same algorithm works in the group case. The main difference appears in \textsc{Step 2}(a), where we have also to clear elements starting with $a_2a_1^{-1}$. This is because not every brotherhood is related to a brotherhood starting from $a_1$, but every brotherhood is related to a brotherhood starting from either $a_1$ (if its second letter is not $a_1^{-1}$) or $a_2a_1^{-1}$ (otherwise). Also, in  \textsc{Step 3} we have to take the inverses of the generators into account.

This leads to the following algorithm:

\bigskip

\noindent\hrulefill
\begin{center}
Algorithm \textsc{Decide Triviality in $\widehat{\mathcal C}(F_n)$}
\end{center}
\noindent \textsc{Input}: Weighted tree $(T_n, \alpha)$.

\noindent \textsc{Output}: \emph{Trivial} or \emph{Non-trivial} according to whether $[c(T_n,\alpha)] = [0]$ or not.

\begin{enumerate}[\textsc{Step }1]
\item Let $\alpha' := \alpha$, and let $l$ be the depth of $\alpha'$.
\item While $l \geq 2$ repeat the following steps to the tree $\alpha'$:
\begin{enumerate}
\item Transfer all brotherhoods which start with $a_1$ or or $a_2a_1^{-1}$ and have depth $l$.
\item Reduce all constant brotherhoods of depth $l$.
\item If the depth of $\alpha'$ is still $l$, return \emph{non-trivial} and stop the algorithm. Otherwise, replace $l$ by the new length of $\alpha'$.
\end{enumerate}
\item If $\alpha'(a_i)=\alpha'(a_i^{-1})= -\alpha'(e)$ for each $i=1, \dots, n$, then return \emph{trivial}, otherwise return \emph{non-trivial}.
\end{enumerate}
\noindent \hrulefill

The proof for termination and correctness is as in the monoid case. The two algorithms presented here suffer from two defects. 

Firstly, they are not quite optimal as far as their runtime is concerned. The main problem is that in the transfer step \textsc{Step 2}(a), big coefficients in the transferred brotherhood may generate big coefficients in related brotherhoods. For this reason, it is not optimal to always apply  transfer to the brotherhoods starting with $a_1$ (or $a_2a_1^{-1}$ in the group case). By a more cleverly chosen (but more complicated) combination of transfer and (partial) reduction steps once can in fact achieve that the input list of depth $L$ is converted into a list of depth $L-1$ of smaller size. This also reduces the runtime.

Secondly, the ``algorithms'' above aren't actually algorithms in the formal sense, since we do not specify how exactly the data is stored and how exactly addition and comparison of numbers are implemented. Without specifying these details, one cannot even start to discuss the runtime of our algorithms. Moreover, it turns out that a detailed runtime analysis of our algorithms requires some considerations in complexity theory. Since this analysis is of a somewhat different flavour than the topics covered in the present article, we will discuss the optimized algorithms in a separate article \cite{Sequel}.

\appendix

\section{Homogenizations of counting functions}
In this appendix we discuss certain classes of homogeneous functions related to counting functions.

\begin{definition} Let $M$ be a monoid. A function $f: M \to \R$ is called \emph{homogeneous} if $f(g^n) = n \cdot f(g)$ for all $g \in M$ and $n \geq 0$. It is called \emph{homogenizable} if for every $g \in M$ the limit
\[
\widehat{f}(g) = \lim_{n \to \infty} \frac{f(g^n)}{n}
\]
exists. In this case, $\widehat{f}: M \to \R$ is called the \emph{homogenization} of $f$.
\end{definition}

By basis properties of limits, the homogenizable functions form a real vector space, and homogenization defines a linear endomorphism of this vector space, whose image is given by the subspace of homogeneous functions. Moreover, if two homogenizable functions are at bounded distance, then their homogenizations coincide.

It is well-known that quasimorphisms are homogenizable (see e.g. \cite{scl}). Moreover, if $f$ is a quasimorphism, then its homogenization $\widehat{f}$ can be characterized as the unique homogeneous function at bounded distance from $f$. Moreover, two quasimorphisms are at bounded distance if and only if their homogenizations coincide.

In this appendix we will show (following closely an argument from \cite{Grigorchuk} for counting quasimorphisms) that counting functions on free monoids and groups are also homogenizable. However, in the group case it is \emph{not} true that a counting function is at bounded distance from its homogenization. Consequently, some standard arguments from the theory of homogeneous quasimorphisms do not carry over to the setting of counting functions. This caveat is the reason why we work out a couple of otherwise standard arguments in detail.

From now on let $S = \{a_1, \dots, a_n\}$ be a set of cardinality $n$. We denote by $M_n$ and $F_n$ respectively the free monoid and free group with basis $S$. In analogy with the subword relation discussed in the introduction we can also introduce a cyclic subword relation as follows. Informally, if $v , w \in M_n$ we say that $v$ is a \emph{cyclic subword} of $w$ if $v$ can be read off by running along (possibly several times) the cyclic word obtained by closing up $w$ (i.e. writing $w$ along a circle). Thus e.g. $a_1a_2$ is a cyclic subword of $a_2a_1a_3a_1$, but also $a_1^3$ is a cyclic subword of $a_1^2$. To define this more formally, we introduce the following notation.

Given positive integers $l.m$ we denote by $[l]_m$ the unique number in $\{1, \dots, m\}$ which is congruent to $l$ modulo $m$.  Then $v = s_1\cdots s_l \in M_n$ is a cyclic subword of $w = r_1\cdots r_m\in M_n$ if there exists $j \in \{1, \dots, m\}$ such that
\begin{equation}\label{cyclicsubword}
s_i = r_{[j+i]_{m}} \quad \text{ for all } i = 1, \dots, l.
\end{equation}
Given $v \in M_n$ we define the \emph{cyclic counting function} $\widehat{\rho_v}: M_n \to \Z$ as follows: $\widehat{\rho_v}(w)$ counts the cyclic occurrences of $v$ in $w$, i.e. if $v = s_1\cdots s_l$ and $w = r_1\cdots r_m\in M_n$, then $\widehat{\rho_v}(w)$ is the number of $j \in \{1, \dots, m\}$ such that \eqref{cyclicsubword} holds.
\begin{lem}\label{HomogenizationMonoid} The counting function $\rho_v: M_n \to \Z$ is homogenizable, and its homogenization is given by the cyclic counting function $\widehat{\rho_v}: M_n \to \Z$.
\end{lem}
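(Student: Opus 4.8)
The plan is to compute the limit $\lim_{n\to\infty}\rho_v(w^n)/n$ directly and show it equals $\widehat{\rho_v}(w)$. First I would fix $v = s_1\cdots s_l$ and $w = r_1\cdots r_m$ in $M_n$, and write down an explicit expression for $\rho_v(w^n)$. The word $w^n$ has length $nm$, and an occurrence of $v$ in $w^n$ is a position $j \in \{1, \dots, nm - l\}$ at which $v$ appears. Each such position $j$ can be written uniquely as $j = km + j'$ with $0 \le k < n$ and $j' \in \{1, \dots, m\}$ (roughly; one must be slightly careful with the endpoints), and the condition that $v$ occurs at position $j$ depends only on $j'$ modulo $m$, \emph{provided} the occurrence does not run off the end of $w^n$. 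So up to a bounded error term (coming from the $O(l)$ positions near the end of $w^n$ where wrap-around matters, but there is no further copy of $w$ to wrap into), $\rho_v(w^n)$ equals $n$ times the number of residues $j' \in \{1, \dots, m\}$ at which $v$ occurs cyclically in $w$, i.e. $n \cdot \widehat{\rho_v}(w)$.

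The key steps, in order, are: (1) express $\rho_v(w^n)$ as a sum over starting positions and group these positions by their residue class mod $m$; (2) observe that for a residue class $j'$, the number of $k$ with $0 \le k < n$ such that $v$ occurs in $w^n$ starting at position $km + j'$ is either $n$ or $n - O(1)$, the discrepancy bounded by $\lceil (l+j')/m \rceil \le \lceil l/m\rceil + 1$, which is independent of $n$; (3) identify the residues $j'$ that contribute (asymptotically) $n$ each as exactly those satisfying the cyclic occurrence condition \eqref{cyclicsubword}; (4) conclude $\rho_v(w^n) = n\,\widehat{\rho_v}(w) + O(1)$, hence divide by $n$ and pass to the limit to get $\widehat{\rho_v}(w) = \lim_n \rho_v(w^n)/n$, which is precisely the homogenization. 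One should also handle the degenerate case $v = e$ separately: there $\rho_e(w^n) = |w^n|_S = n|w|_S$ exactly, so the homogenization is $|w|_S$, and one checks this agrees with the convention for $\widehat{\rho_e}$ (the cyclic count of the empty word, interpreted as the length).

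The main obstacle I anticipate is the bookkeeping in step (2)--(3): namely, an occurrence of $v$ at position $j$ in $w^n$ may "straddle" a boundary between consecutive copies of $w$, so it is not literally the case that occurrences correspond to occurrences in a single copy of $w$. This is exactly why the cyclic subword notion enters — reading across copy boundaries is the same as reading around the circle obtained by closing up $w$. One has to verify carefully that, for $n$ large enough relative to $l$ and $m$, a position $j = km + j'$ with $k$ not too close to $0$ or $n$ yields an occurrence of $v$ in $w^n$ if and only if $j'$ yields a cyclic occurrence of $v$ in $w$ in the sense of \eqref{cyclicsubword}, and that the number of "bad" boundary positions near the start and end is bounded uniformly in $n$. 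Once this uniform boundedness of the error is in hand, dividing by $n$ and letting $n \to \infty$ is immediate. I would also remark that this argument is the one alluded to as following \cite{Grigorchuk}, and that the analogous statement for $F_n$ (Lemma~\ref{HomogenizationGroup}) will require the additional input that $w$ is replaced by a cyclically reduced representative so that $w^n$ is already reduced, which is why that case is stated separately.
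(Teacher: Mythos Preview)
Your argument is correct and arrives at the same conclusion, but the paper organizes the proof more economically. Rather than computing $\rho_v(w^n)$ directly, the paper observes two simpler facts separately: (a) the cyclic counting function $\widehat{\rho_v}$ is already homogeneous (immediate from the definition, since the cyclic word of $w^n$ is the $n$-fold cover of the cyclic word of $w$), and (b) $\|\widehat{\rho_v} - \rho_v\|_\infty \leq |v|_S$ uniformly, because an ordinary occurrence is always a cyclic one, and a cyclic occurrence at position $j$ that fails to be ordinary must have $j > m - l$. These two facts combine in one line to give the limit.

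Your direct computation is essentially a fused version of (a) and (b), specialized to the word $w^n$: your ``bookkeeping'' in steps (2)--(3) is precisely the verification that the discrepancy between cyclic and ordinary counting on $w^n$ is bounded by something independent of $n$, and your identification of the main term with $n\,\widehat{\rho_v}(w)$ is exactly homogeneity. So nothing is wrong, but you are working harder than necessary. The paper's formulation has the additional payoff that the uniform bound $\|\widehat{\rho_v} - \rho_v\|_\infty < \infty$ is itself recorded and contrasted with the group case (Example~\ref{ExampleHomogenization}), where it fails; your approach would give only $|\rho_v(w^n) - \widehat{\rho_v}(w^n)| = O_{w,v}(1)$, which is weaker and does not isolate this phenomenon as cleanly.
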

\begin{proof} The cyclic counting function is obviously homogeneous. Moreover, if \eqref{cyclicsubword} holds for some $j \in \{1, \dots, m\}$ and $j\leq m-l$, then \eqref{subword} holds for the same $j$. It follows that $\|\widehat{\rho_v} - \rho_v\|_\infty \leq |v|_S$. Thus for all $w \in M_n$,
\[
\lim_{n \to \infty} \frac{\rho_v(w^n)}{n} =  \lim_{n \to \infty} \left(\frac{\widehat{\rho_v}(w^n)}{n} + \frac{\rho_v(w^n)-\widehat{\rho_v}(w^n)}{n}\right) =  \lim_{n \to \infty}\frac{\widehat{\rho_v}(w^n)}{n} = \lim_{n \to \infty}\frac{n \cdot \widehat{\rho_v}(w)}{n} = \widehat{\rho_v}(w).\qedhere
 \]
\end{proof}
Note that in the monoid case we have $\|\widehat{\rho_v} - \rho_v\|_\infty  < \infty$. We will see in Example \ref{ExampleHomogenization} that the corresponding statement fails in the group case. The reason for this failure is given by cyclic cancellations, as we explain next.

Recall that a reduced word $w \in F_n$ is called \emph{cyclically reduced} if its initial letter is not the inverse of its final letter. In this case we can close up $w$ and obtain a reduced cyclic word. Every reduced word $w \in F_n$ is conjugate to a cyclically reduced (and reduced) word $w_0$ (sometimes called the  \emph{cyclic reduction} of $w$), which is unique up to cyclic permutation. In particular, the cyclic word obtained by closing $w_0$ depends only on $w$.  Given a reduced word $v \in F_n$ we define a \emph{cyclic counting function} $\widehat{\rho_v}: F_n \to \Z$ as follows. Given a reduced word $w \in F_n$, let $w_0$ be its cyclic reduction. Then $\widehat{\rho_v}(w)$ counts the cyclic occurrences of $v$ in the reduced cyclic word obtained by closing $w_0$. With this definition understood we have:
\begin{lem}\label{HomogenizationGroup} The counting function $\rho_v: F_n \to \Z$ is homogenizable, and its homogenization is given by the cyclic counting function $\widehat{\rho_v}: F_n \to \Z$.
\end{lem}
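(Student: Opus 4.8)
The plan is to follow the same two-step strategy as in the monoid case (Lemma~\ref{HomogenizationMonoid}), but to localize the comparison between $\rho_v$ and $\widehat{\rho_v}$ to the sequence of powers $w^n$: as the discussion above makes clear, the two functions are \emph{not} at bounded distance globally, so the slick estimate $\|\widehat{\rho_v}-\rho_v\|_\infty \le |v|_S$ is unavailable and one has to work a little harder. First I would record that $\widehat{\rho_v}\colon F_n \to \Z$ is homogeneous: if $w_0$ is the cyclic reduction of $w$, then $w_0^k$ is a cyclic reduction of $w^k$, and the reduced cyclic word obtained by closing $w_0^k$ is just the reduced cyclic word of $w_0$ traversed $k$ times, so reading off cyclic occurrences of $v$ gives $\widehat{\rho_v}(w^k)=k\,\widehat{\rho_v}(w)$.

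The key normalization step is to write $w = u\,w_0\,u^{-1}$ as a reduced word with $w_0$ cyclically reduced and $u$ reduced (possible for every $w \in F_n$). Since $w_0$ is cyclically reduced and there is no cancellation in $u w_0 u^{-1}$, one checks that $u\,w_0^n\,u^{-1}$ is reduced and equals $w^n$. I would then split the occurrences of $v$ in the reduced word $w^n = u\,w_0^n\,u^{-1}$ into those lying entirely inside the middle block $w_0^n$ and those meeting the prefix $u$ or the suffix $u^{-1}$. The former are in bijection with the occurrences of $v$ in the word $w_0^n$, hence number exactly $\rho_v(w_0^n)$; the latter have starting position within $|u|_S$ of one of the two ends, hence number at most $2|u|_S$, independently of $n$. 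Therefore $|\rho_v(w^n) - \rho_v(w_0^n)| \le 2|u|_S$ for all $n$.

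It then remains to analyze $\rho_v(w_0^n)$ for the cyclically reduced word $w_0$. Here $w_0^n$ involves no internal cancellation, so the monoid-type estimate applies verbatim: comparing linear with cyclic occurrences in $w_0^n$ gives $|\rho_v(w_0^n) - \widehat{\rho_v}(w_0^n)| \le |v|_S$, and by periodicity $\widehat{\rho_v}(w_0^n) = n\,\widehat{\rho_v}(w_0)$ (a starting position on the circle of circumference $n|w_0|_S$ yields an occurrence iff its residue mod $|w_0|_S$ does on the circle of circumference $|w_0|_S$). Combining the two estimates, and using $\widehat{\rho_v}(w)=\widehat{\rho_v}(w_0)$, gives $|\rho_v(w^n) - n\,\widehat{\rho_v}(w)| \le 2|u|_S + |v|_S$; dividing by $n$ and letting $n\to\infty$ yields $\lim_n \rho_v(w^n)/n = \widehat{\rho_v}(w)$, which is exactly the assertion. (For $v=e$ one argues directly with word lengths, $\rho_e(w^n) = 2|u|_S + n|w_0|_S$.)

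The step I expect to require the most care is the reduction to the cyclically reduced case: one must be sure that the conjugating prefix $u$ contributes only an $O(1)$ error to the occurrence count — in contrast to the global situation, where iterating conjugation can drive $\rho_v$ and $\widehat{\rho_v}$ arbitrarily far apart — and that, once $w_0$ is cyclically reduced, $w^n$ genuinely is the reduced word $u\,w_0^n\,u^{-1}$ with no hidden cancellations. Everything else is a bookkeeping variant of the monoid argument.
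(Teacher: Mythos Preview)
Your proposal is correct and follows essentially the same approach as the paper: both reduce to the cyclically reduced case via the estimate $|\rho_v(uw_0^nu^{-1})-\rho_v(w_0^n)|\le 2|u|_S$ and then invoke the monoid argument for cyclically reduced words. The paper's proof is simply a terser version of yours---it records the conjugation estimate in one line and then says ``argue literally as in the monoid case''---while you spell out the homogeneity of $\widehat{\rho_v}$, the reducedness of $uw_0^nu^{-1}$, and the $|v|_S$ bound explicitly.
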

\begin{proof} We observe first that if $w = xw_0x^{-1}$ as above, then
\[
\lim_{n \to \infty} \frac{\rho_v(w^n)}{n} = \lim_{n \to \infty} \frac{\rho_v(xw_0^nx^{-1})}{n} =  \lim_{n \to \infty} \frac{\rho_v(w_0^n)}{n},
\]
because $|\rho_v(xw_0^nx^{-1}) - \rho_v(w_0^n)|< 2 |x|_S$. It thus suffices to show that the homogenization of $\rho_v$ coincides with $\widehat{\rho_v}$ on cyclically reduced words $w=w_0$. However, on such words we can argue literally as in the monoid case.
\end{proof}
\begin{example}\label{ExampleHomogenization}
We have $\rho_{a_1}(a_1^na_2a_1^{-n}) = n$, whereas $\widehat{\rho_{a_1}}(a_1^na_2a_1^{-n}) = 0$. Thus $\|\rho_{a_1} - \widehat{\rho_{a_1}}\|_\infty = \infty$.
\end{example}
In the body of the text we will apply homogenization in the following form.
\begin{cor}\label{HomogenizationConvenient}
Let $f = \sum a_w \rho_w$ be a finite sum of counting functions either on the free monoid $M_n$ or on the free group $F_n$. Then the following hold:
\begin{enumerate}[(i)]
\item $f$ is homogenizable.
\item The homogenization of $f$ is given in terms of cyclic counting functions as
\[
\widehat{f} = \sum \alpha_w \widehat{\rho_w}.
\]
\item If $f$ is a bounded function, then $\widehat{f} = 0$.
\end{enumerate}
\end{cor}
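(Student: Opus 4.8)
The plan is to obtain Corollary~\ref{HomogenizationConvenient} as a direct consequence of Lemmas~\ref{HomogenizationMonoid} and~\ref{HomogenizationGroup} together with the elementary remarks on homogenizable functions recorded at the beginning of this appendix; no new ideas are needed. The one structural fact I would rely on is that homogenizability is a linear condition: on any monoid $M$ the homogenizable functions form a real vector space and $f \mapsto \widehat{f}$ is a linear endomorphism of this space.

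For parts (i) and (ii) I would argue as follows. By Lemma~\ref{HomogenizationMonoid} in the monoid case, and by Lemma~\ref{HomogenizationGroup} in the group case, each individual counting function $\rho_w$ with $w \neq e$ is homogenizable with homogenization equal to the associated cyclic counting function $\widehat{\rho_w}$; the same holds for $\rho_e$, which is the word length and hence the finite sum $\sum_{|w|_S=1}\rho_w$ of such functions. Since $f = \sum \alpha_w \rho_w$ is a finite linear combination of homogenizable functions, it is itself homogenizable, which proves (i); and by linearity of the homogenization operator one gets $\widehat{f} = \sum \alpha_w \widehat{\rho_w}$, which proves (ii).

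For part (iii), assume $f$ is bounded, say $\sup_g |f(g)| = C < \infty$. Then for every $g$ in $M_n$ (respectively $F_n$) one has $|f(g^n)/n| \le C/n \to 0$ as $n \to \infty$, so $\widehat{f}(g) = 0$ for all $g$, i.e. $\widehat{f} = 0$. Equivalently, one may invoke the remark that the homogenizations of two functions at bounded distance coincide, applied to $f$ and the zero function.

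I do not anticipate any genuine obstacle; the corollary is purely a matter of assembling the preceding lemmas. The only subtlety worth flagging is the one emphasised in Example~\ref{ExampleHomogenization}: in the group case $\rho_w$ need not lie at bounded distance from $\widehat{\rho_w}$, so one should not try to deduce (iii) from any ``bounded distance between $f$ and $\widehat{f}$'' statement about $f$ itself; the direct limit computation above, or equivalently the comparison of $f$ with the zero function, sidesteps this entirely.
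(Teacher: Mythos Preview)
Your proposal is correct and follows essentially the same approach as the paper's own proof: parts (i) and (ii) are deduced from Lemmas~\ref{HomogenizationMonoid} and~\ref{HomogenizationGroup} together with the linearity of the space of homogenizable functions and of the homogenization operator, and part (iii) follows directly from the definition of homogenization. Your added remarks on $\rho_e$ and on the caveat from Example~\ref{ExampleHomogenization} are accurate but not strictly needed.
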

\begin{proof} (i) follows from Lemma \ref{HomogenizationMonoid} and Lemma \ref{HomogenizationGroup} together with the fact that homogenizable functions form a vector space. (ii) follows from these lemmas together with the fact that homogenization is linear. (iii) is immediate from the definition of homogenization.
\end{proof}
Finally, let us relate the spaces $\widehat{\mathcal C}(F_n)$ and $\widehat{\mathcal C}(M_n)$ to cyclic counting functions. By Corollary \ref{HomogenizationConvenient} we have well-defined linear maps
\[
\iota_{M_n}: \widehat{\mathcal C}(M_n) \to {\rm span}\{\widehat{\rho_w}\mid w \in M_n\}, \quad \iota_{F_n}: \widehat{\mathcal C}(F_n) \to {\rm span}\{\widehat{\rho_w}\mid w \in F_n\},
\]
which send a class $[f]$ to the homogenization $\widehat{f}$.
\begin{thm}\label{ThmCyclicFcts}
The maps $\iota_{M_n}$ and $\iota_{F_n}$ are isomorphisms. In particular $\widehat{\mathcal C}(F_n)$ and $\widehat{\mathcal C}(M_n)$ can be identified with the respective vector spaces spanned by cyclic counting function.
\end{thm}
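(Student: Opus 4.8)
The plan is to use the tautological identification $\widehat{\mathcal C}(M_n) = \R[M_n]/K(M_n)$ coming from the surjection $\mathfrak{q}$: under it, Corollary \ref{HomogenizationConvenient} says that $\iota_{M_n}$ is the map induced by $f_0 = \sum_w\alpha_w\delta_w \mapsto \widehat{f} = \sum_w\alpha_w\widehat{\rho_w}$. Surjectivity of $\iota_{M_n}$ is then immediate, since $\iota_{M_n}([\rho_w]) = \widehat{\rho_w}$ and the target space is by definition spanned by the cyclic counting functions. Thus the entire content is injectivity, i.e. the implication ``$\widehat f = 0 \Rightarrow f_0 \in K(M_n)$'' (the reverse containment is Corollary \ref{HomogenizationConvenient}(iii)).

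First I would reduce to the \emph{pure} case. Exactly as in the proof of Lemma \ref{BEqualsK}, repeatedly adding left-extension relations $l_w \in B(M_n) \subseteq K(M_n)$ turns $f_0$ into a pure element of $\R[M_n]_L$ for some $L$ (which we may take as large as we like, say $\geq 2$) without changing either the class $[f]$ or the homogenization $\widehat f$ (the latter because $B(M_n)\subseteq K(M_n)$ and homogenization kills $K(M_n)$). So assume $f_0 = \sum_{|w|=L}\alpha_w\delta_w$ is pure of length $L$. Comparing the definition \eqref{cCert} of the $L$-certificate $\langle c\rangle_L$ with Corollary \ref{HomogenizationConvenient}(ii) gives $\widehat f(c) = \sum_{|w|=L}\alpha_w\widehat{\rho_w}(c) = \langle c\rangle_L(f_0)$ for every $c \in M_n$. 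Hence $\widehat f = 0$ says precisely that $f_0$ is annihilated by every $L$-certificate $\langle c\rangle_L$, $c \in M_n$.

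The final step is the dimension bookkeeping, which is essentially already contained in Subsection \ref{SubsecRelMonoids}. Every certificate vanishes on $K_L = K(M_n)\cap\R[M_n]_L$, so $\mathrm{span}\{\langle c\rangle_L \mid c \in M_n\} \subseteq K_L^\perp \subseteq (\R[M_n]_L)^*$; conversely, the set $C_L$ produced in Step 2 of the proof of Theorem \ref{RelationsObvious}(i) consists of $(n-1)n^{L-1}+1$ linearly independent certificates, and $(n-1)n^{L-1}+1 = \mathrm{codim}\,K_L = \dim K_L^\perp$ (from $\dim K_L = n^{L-1}-1$). Therefore the certificates span all of $K_L^\perp$, and an element of the finite-dimensional space $\R[M_n]_L$ killed by all of $K_L^\perp$ must lie in $K_L$. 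Thus $f_0 \in K_L \subseteq K(M_n)$, as required, and $\iota_{M_n}$ is an isomorphism.

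For $\iota_{F_n}$ I would run the identical argument with Lemma \ref{HomogenizationGroup}, the left-extension relations of \eqref{KirchhoffGroup}, and Subsection \ref{SubsecRelGroups} in place of their monoid analogues: reduce to a pure $f_0 \in \R[F_n]_L$ with $L \geq 2$, observe $\widehat f(c) = \langle c\rangle_L(f_0)$ for all $c$, and use that the $\dim\R[F_n]_L - 2n(2n-1)^{L-2}+1$ linearly independent certificates built there span $K_L^\perp$. The only point needing extra care -- as already in Subsection \ref{SubsecRelGroups} and in Lemma \ref{HomogenizationGroup} -- is that $\widehat{\rho_w}$ is computed through the cyclic reduction of its argument, which is exactly why the certificates there are attached to the \emph{cyclically reduced} words $ws(w)$; no new obstacle arises. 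The only genuinely substantive point in the whole proof is the identification $\mathrm{span}\{\langle c\rangle_L\} = K_L^\perp$, and this just amounts to re-reading the dimension counts already carried out in Section \ref{SecRelations}.
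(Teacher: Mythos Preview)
Your argument is correct, and for the group case it is essentially the paper's own proof: both reduce to a fixed pure level $L$, introduce the kernel of $\iota_L\circ q_L$, and observe that the certificates $\langle c\rangle_L$ from \textsc{Step~2} of Subsection~\ref{SubsecRelGroups} vanish on this larger kernel while already cutting out $K_L(F_n)$ by the dimension count. Your phrasing via $\mathrm{span}\{\langle c\rangle_L\} = K_L^\perp$ and $(K_L^\perp)^\perp = K_L$ is just a linear-algebra reformulation of the paper's ``$K_L' \subseteq K_L$'' step.

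The one genuine difference is in the monoid case. You run the same certificate argument for $M_n$, which is valid but unnecessary: the paper simply notes that for monoids $\|\rho_v - \widehat{\rho_v}\|_\infty \le |v|_S$ (from the proof of Lemma~\ref{HomogenizationMonoid}), hence any $f\in\mathcal C(M_n)$ is at bounded distance from its homogenization, and $\widehat f = 0$ immediately gives $[f]=0$. This shortcut is unavailable for $F_n$ precisely because of Example~\ref{ExampleHomogenization}. So your approach is more uniform across the two cases, while the paper's is shorter for $M_n$ and isolates exactly where the free-group case requires the full strength of Section~\ref{SecRelations}.
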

Some parts of Theorem \ref{ThmCyclicFcts} are obvious. Firstly, surjectivity is immediate from Corollary \ref{HomogenizationConvenient}. In the monoid case, injectivity is also easy: Namely, given $f \in \mathcal C(M_n)$ we have $\|f-\widehat{f}\|_\infty < \infty$. Thus if $\widehat{f} = 0$, then $f$ is bounded and thus $[f] = 0$. This shows that $\ker(\iota_{M_n})$ is trivial, and thus $\iota_{M_n}$ is indeed an isomorphism. However, in view of Example \ref{ExampleHomogenization} this simple argument does not work in the group case. Instead we have to use the full strength of the proof of Theorem \ref{RelationsObvious}.
\begin{proof}[Proof of Theorem \ref{ThmCyclicFcts}] Since $\widehat{\mathcal C}(F_n)$ is the ascending union of the pure subspaces $\widehat{\mathcal C}(F_n)_L$, it suffices to show that for each fixed $L\geq 2$, the map
\[\iota_L: \widehat{\mathcal C}(F_n)_L \hookrightarrow \widehat{\mathcal C}(F_n) \xrightarrow{\iota_{F_n}}  {\rm span}\{\widehat{\rho_w}\mid w \in F_n\}, \quad [f] \mapsto \widehat{f}\]
is injective. Denote by $q_L:  \R[F_n]_L \to  \widehat{\mathcal C}(F_n)_L$ the natural surjection and define $K'_L(F_n)  := \ker(\iota_L \circ q_L)$. Then we have a commuting diagram with exact rows

\[\begin{xy}\xymatrix{
0\ar[r]&K_L(F_n) \ar[r] \ar[d]^{(\iota_L)_*} & \R[F_n]_L \ar[r]^{q_L} \ar[d]^{\rm Id}& \widehat{\mathcal C}(F_n)_L \ar[d]^{\iota_L} \ar[r] & 0\\
0\ar[r]&K'_L(F_n) \ar[r] & \R[F_n]_L \ar[r]^{\iota_L \circ q_L} &{\rm Im}(\iota_L) \ar[r] & 0.
}\end{xy}
\]
Since $\iota_L$ is onto, the induced map $(\iota_L)_*$ embeds $K_L(F_n)$ into $K'_L(F_n)$, and we have to show that this embedding is onto. We have seen in \textsc{Step 2} of Subsection \ref{SubsecRelGroups} that the kernel $K_L(F_n)$ can be characterized as the subset of $\R[F_n]$ on which certain certificates $\langle c \rangle_L$ vanish. However, by definition of these certificates, these also vanish on $K'_L(F_n)$. This yields the desired surjectivity and finishes the proof.
\end{proof}
Combining Theorem \ref{ThmCyclicFcts} with Theorem \ref{ThmBasis} we deduce:
\begin{cor}[Basis theorem for cyclic counting functions]
\begin{enumerate}[(i)]
\item Denote by $W$ the set of all words in $M_n$ which do not start or end with $a_1$ (including the empty word). Then the cyclic counting functions $\{\widehat{\rho_w}\mid w \in W\}$ form a basis for the space ${\rm span}\{\widehat{\rho_w}\mid w \in M_n\}$.
\item Denote by $W'$ the set of all reduced words in $F_n$ which do not start with $a_1$ or $a_2a_1^{-1}$ and do not end with $a_1^{-1}$ or $a_1a_2^{-1}$ (including the empty word),  and let $W := W' \cup \{a_1^{-1}\}$.  Then the cyclic counting functions $\{\widehat{\rho_w}\mid w \in W\}$ form a basis for the space ${\rm span}\{\widehat{\rho_w}\mid w \in F_n\}$.\end{enumerate}\qed
\end{cor}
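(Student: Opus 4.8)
The plan is to transport the bases produced in Theorem~\ref{ThmBasis} along the homogenization isomorphisms of Theorem~\ref{ThmCyclicFcts}. Recall that Theorem~\ref{ThmCyclicFcts} asserts that the linear maps
\[
\iota_{M_n}\colon \widehat{\mathcal C}(M_n) \to {\rm span}\{\widehat{\rho_w}\mid w \in M_n\}, \qquad \iota_{F_n}\colon \widehat{\mathcal C}(F_n) \to {\rm span}\{\widehat{\rho_w}\mid w \in F_n\},
\]
which are given on counting functions by $\iota_{M_n}([\rho_w]) = \widehat{\rho_w}$ and $\iota_{F_n}([\rho_w]) = \widehat{\rho_w}$, are isomorphisms of vector spaces. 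A linear isomorphism sends any basis of its domain bijectively to a basis of its codomain; hence it suffices to feed in the explicit bases of $\widehat{\mathcal C}(M_n)$ and $\widehat{\mathcal C}(F_n)$ constructed in Theorem~\ref{ThmBasis}.

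Concretely, for part (i) one takes $W$ as in Theorem~\ref{ThmBasis}.(i), so that $\{[\rho_w]\mid w \in W\}$ is a basis of $\widehat{\mathcal C}(M_n)$. Its image under $\iota_{M_n}$ is exactly $\{\widehat{\rho_w}\mid w \in W\}$, which is therefore a basis of ${\rm span}\{\widehat{\rho_w}\mid w \in M_n\}$: it spans because $\iota_{M_n}$ is surjective and carries a spanning set to a spanning set, and it is linearly independent because $\iota_{M_n}$ is injective and injective linear maps preserve linear independence. Part (ii) is proved identically, with $W = W' \cup \{a_1^{-1}\}$ as in Theorem~\ref{ThmBasis}.(ii) and $\iota_{F_n}$ in place of $\iota_{M_n}$.

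Since both ingredients are already in hand, there is essentially no remaining obstacle; the argument is a formal manipulation. The only delicate point lives entirely inside Theorem~\ref{ThmCyclicFcts}: in the group case $\rho_w$ need not be at bounded distance from $\widehat{\rho_w}$ (Example~\ref{ExampleHomogenization}), so injectivity of $\iota_{F_n}$ cannot be obtained by the naive bounded-distance argument and instead rests on the certificate computation from \textsc{Step 2} of Subsection~\ref{SubsecRelGroups}. With that established, the present corollary follows at once.
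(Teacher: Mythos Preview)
Your proposal is correct and matches the paper's approach exactly: the paper simply states that the corollary follows by ``combining Theorem~\ref{ThmCyclicFcts} with Theorem~\ref{ThmBasis}'' and leaves the details implicit, which is precisely the transport-of-bases argument you spell out.
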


\bigskip

\noindent\textbf{Authors' addresses:}\\
\noindent \textsc{Mathematics Department, Technion, Haifa 32000, Israel}\\
\texttt{hartnick@tx.technion.ac.il};

\noindent \textsc{Steklov Mathematical Institute of Russian Academy of Sciences, \\ Gubkina Str. 8, 119991, Moscow, Russia}\\
\texttt{altal@mi.ras.ru}.

\end{document}